\documentclass[11pt, oneside]{amsart}   	
\usepackage{amsmath,amsthm,amscd,amsfonts, amssymb}
\usepackage{geometry}                		
\geometry{letterpaper}                   		
\usepackage{graphicx}				

%

\font\mbn=msbm10 scaled \magstep1
\font\mbs=msbm7 scaled \magstep1
\font\mbss=msbm5 scaled \magstep1
\newfam\mbff
\textfont\mbff=\mbn
\scriptfont\mbff=\mbs
\scriptscriptfont\mbff=\mbss

\newcommand{\RR}       { \mathbb{R}}

\newcommand{\cP}      {{\mathcal P}}
\newcommand{\N}       { \mathbb{N}}
\newcommand{\Z}        {\mathbb{Z}  }

\newtheorem{Th}{Theorem}[section]
\newtheorem{Lm}[Th]{Lemma}
\newtheorem{C}[Th]{Corollary}
\newtheorem{D}[Th]{Definition}
\newtheorem{Proposition}[Th]{Proposition}
\newtheorem{R}[Th]{Remark}

\begin{document}

\title[On Properties of Geometric Preduals of ${\mathbf C^{k,\omega}}$ Spaces]{On Properties of Geometric Preduals of ${\mathbf C^{k,\omega}}$ Spaces}
\author{Alexander Brudnyi} 
\address{Department of Mathematics and Statistics\newline
\hspace*{1em} University of Calgary\newline
\hspace*{1em} Calgary, Alberta\newline
\hspace*{1em} T2N 1N4}
\email{abrudnyi@ucalgary.ca}

\keywords{Predual space, Whitney problems, Finiteness Principle, linear extension operator, approximation property, dual space, Jackson operator, weak$^*$ topology, weak Markov set}
\subjclass[2010]{Primary 46B20; Secondary 46E15}

\thanks{Research supported in part by NSERC}

\date{}							


\begin{abstract}
Let $C_b^{k,\omega}(\RR^n)$ be the Banach space of $C^k$ functions on $\RR^n$ bounded together with all derivatives of order $\le k$ and with derivatives of order $k$ having moduli of continuity majorated by $c\cdot\omega$, $c\in\RR_+$, for some $\omega\in C(\RR_+)$. Let $C_b^{k,\omega}(S):=C_b^{k,\omega}(\RR^n)|_S$  be the trace space to a closed subset $S\subset\RR^n$. The geometric predual $G_b^{k,\omega}(S)$ of $C_b^{k,\omega}(S)$ is the minimal closed subspace of the dual $\bigl(C_b^{k,\omega}(\RR^n)\bigr)^*$ containing evaluation functionals of points in $S$. We study geometric properties of spaces $G_b^{k,\omega}(S)$ and their relations to the classical Whitney problems on the characterization of trace spaces of $C^k$ functions on $\RR^n$. 
\end{abstract}

\maketitle

\section{Formulation of Main Results}
\subsection{Geometric Preduals of ${\mathbf C^{k,\omega}}$ Spaces} 
In what follows we use the standard notation of Differential Analysis. In particular,
$\alpha=(\alpha_1,\dots,\alpha_n)\in \mathbb Z^n_+$ denotes a multi-index
and $|\alpha|:=\sum^n_{i=1}\alpha_i$. Also, for $x=(x_1,\dots, x_n)\in\mathbb R^n$,
 \begin{equation}\label{eq1}
x^\alpha:=\prod^n_{i=1}x^{\alpha_i}_i \ \ \text{ and} \ \  D^\alpha:=\prod^n_{i=1}D^{\alpha_i}_i,\quad {\rm where}\quad D_i:=\frac{\partial}{\partial x_i}.
 \end{equation}


Let  $\omega$ be a nonnegative function on $(0,\infty)$ (referred to as {\em modulus of continuity})
satisfying the following conditions:
\begin{enumerate}
\item[(i)] $\omega(t)$ and $\displaystyle \frac {t}{\omega( t)}$ are nondecreasing functions on $(0,\infty)$;\medskip
\item[(ii)] $\displaystyle \lim_{t\rightarrow 0^+}\omega(t)=0$. 
\end{enumerate}

\begin{D}\label{def1}
$C^{k,\omega}_b(\mathbb R^n)$ is the Banach subspace of functions $f\in C^k(\mathbb R^n)$ with norm
 \begin{equation}\label{eq3}
\|f\|_{C^{k,\omega}_b(\mathbb R^n)}:=\max\left(\|f\|_{C^k_b(\mathbb R^n)}, |f|_{C^{k,\omega}_b(\mathbb R^n)}\right) ,
 \end{equation}
 where
 \begin{equation}\label{eq4}
 \|f\|_{C^k_b(\mathbb R^n)}:=\max_{|\alpha|\le k}\left\{\sup_{x\in\mathbb R^n}|D^\alpha f(x)|\right\}
 \end{equation}
and 
 \begin{equation}\label{eq5}
|f|_{C^{k,\omega}_b(\mathbb R^n)}:=\max_{|\alpha|=k}\left\{\sup_{x,y\in\mathbb R^n,\, x\ne y}\frac{|D^\alpha f(x)-D^\alpha
f(y)|}{\omega(\|x-y\|)}\right\}.
 \end{equation}
Here $\|\cdot\|$ is the Euclidean
norm of $\mathbb R^n$.
\end{D}
If $S\subset\mathbb R^n$ is a closed subset, then by $C_b^{k,\omega}(S)$ we denote the trace space of functions $g\in C_b^{k,\omega}(\mathbb R^n)|_{S}$ equipped with the quotient norm 
\begin{equation}\label{eq1.5}
\|g\|_{C_b^{k,\omega}(S)}:=\inf\{\|\tilde g\|_{C_b^{k,\omega}(\mathbb R^n)}\, :\, \tilde g\in C_b^{k,\omega}(\RR^n),\ \tilde g|_{S}=g\}.
\end{equation}

Let $\bigl(C_b^{k,\omega}(\mathbb R^n)\bigr)^*$ be the dual of $C_b^{k,\omega}(\mathbb R^n)$. Clearly, each evaluation functional $\delta_x^0$ at  $x\in\mathbb R^n$ (i.e., $\delta_x^0(f):=f(x)$, $f\in C_b^{k,\omega}(\RR^n)$) belongs to $\bigl(C_b^{k,\omega}(\mathbb R^n)\bigr)^*$ and has norm one. By $G^{k,\omega}_b(S)\subset \bigl(C_b^{k,\omega}(\mathbb R^n)\bigr)^*$ we denote the minimal closed subspace containing all $\delta_x^0$, $x\in S$. 

\begin{Th}\label{te1.2}
The restriction map to the set $\{\delta_s^0\, :\, s\in S\}\subset G_b^{k,\omega}(S)$ determines an isometric isomorphism  between the dual of $G^{k,\omega}_b(S)$ and $C^{k,\omega}_b(S)$.
\end{Th}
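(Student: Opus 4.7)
The plan is to reduce to the case $S = \mathbb{R}^n$ and then deduce the general statement by a quotient duality argument. For the first stage, I aim to show that the canonical map $I : C_b^{k,\omega}(\mathbb{R}^n) \to (G_b^{k,\omega}(\mathbb{R}^n))^{*}$ defined by $(I(f))(\xi) := \xi(f)$ is an isometric isomorphism, with inverse $\phi \mapsto (x \mapsto \phi(\delta_x^0))$. The inequality $\|I(f)\| \le \|f\|_{C_b^{k,\omega}(\mathbb{R}^n)}$ is immediate. The reverse inequality requires realizing both the $C^k$ norm and the $\omega$-H\"older seminorm of $f$ as suprema of $|I(f)(\xi)|$ over suitable test functionals $\xi$ drawn from $G_b^{k,\omega}(\mathbb{R}^n)$ itself.

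The key technical step is to show that the higher-order evaluation functionals $\delta_x^{\alpha}(g) := D^{\alpha}g(x)$, for $|\alpha| \le k$, belong to $G_b^{k,\omega}(\mathbb{R}^n)$ --- not merely to its weak$^*$-closure, but as genuine norm limits of linear combinations of point evaluations. I would proceed by induction on $|\alpha|$: assuming $\delta_x^{\beta} \in G_b^{k,\omega}(\mathbb{R}^n)$ for all $|\beta| \le |\alpha|-1$, a Taylor-remainder computation yields
\begin{equation*}
\left\|\frac{\delta_{x + h e_i}^{\beta} - \delta_x^{\beta}}{h} - \delta_x^{\beta + e_i}\right\|_{(C_b^{k,\omega}(\mathbb{R}^n))^{*}} \le \omega(|h|)
\end{equation*}
(with an analogous $|h|$-bound when $|\beta|+1 < k$), forcing $\delta_x^{\beta + e_i} \in G_b^{k,\omega}(\mathbb{R}^n)$ with norm at most one. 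A parallel argument shows that $(\delta_x^{\alpha} - \delta_y^{\alpha})/\omega(\|x-y\|)$, for $|\alpha|=k$, also lies in $G_b^{k,\omega}(\mathbb{R}^n)$ with norm at most one. Using these test functionals to pick off each summand of $\|f\|_{C_b^{k,\omega}(\mathbb{R}^n)}$ proves $I$ is isometric. Surjectivity is obtained by setting $f(x) := \phi(\delta_x^0)$: the norm convergences above let me verify that $f \in C^k$ with $D^{\alpha} f(x) = \phi(\delta_x^{\alpha})$ and $\|f\|_{C_b^{k,\omega}(\mathbb{R}^n)} \le \|\phi\|$, and that $I(f) = \phi$ on the dense subset $\mathrm{span}\{\delta_x^0\}$.

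For general closed $S \subset \mathbb{R}^n$, $G_b^{k,\omega}(S)$ is by construction a closed subspace of $G_b^{k,\omega}(\mathbb{R}^n)$, so the standard quotient duality $Y^{*} \cong X^{*}/Y^{\perp}$ applies. Combined with the isomorphism $X^{*} \cong C_b^{k,\omega}(\mathbb{R}^n)$ from Stage 1, and the fact that $I(f) \in G_b^{k,\omega}(S)^{\perp}$ iff $f$ vanishes on $S$, this yields
\begin{equation*}
(G_b^{k,\omega}(S))^{*} \cong C_b^{k,\omega}(\mathbb{R}^n) \big/ \{\, f \in C_b^{k,\omega}(\mathbb{R}^n) : f|_S = 0 \,\} = C_b^{k,\omega}(S),
\end{equation*}
with the explicit correspondence $\phi \mapsto \tilde f|_S$, where $\tilde f \in C_b^{k,\omega}(\mathbb{R}^n)$ represents any Hahn-Banach extension of $\phi$ and satisfies $\tilde f(s) = \phi(\delta_s^0)$ --- precisely the restriction map of the theorem. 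The main obstacle is the Taylor-type norm estimate of Stage 1, which is what forces the higher-order evaluations into the norm-closed subspace $G_b^{k,\omega}(\mathbb{R}^n)$ and makes the predual description work; the quotient-duality step is then essentially formal.
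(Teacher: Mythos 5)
Your proposal is correct and takes essentially the same route as the paper: the key step in both is the norm-limit computation $\lim_{h\to 0}(\delta_{x+he_i}^\beta-\delta_x^\beta)/h=\delta_x^{\beta+e_i}$ in $(C_b^{k,\omega}(\mathbb{R}^n))^*$, which forces the higher-order evaluations and difference quotients into $G_b^{k,\omega}(\mathbb{R}^n)$ and yields the isometric identification for $S=\mathbb{R}^n$. Your second stage replaces the paper's explicit Hahn--Banach extend-and-restrict argument by the abstract subspace/quotient duality $Y^*\cong X^*/Y^\perp$, but these are the same reasoning in different clothing.
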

In what follows, $G_b^{k,\omega}(S)$ will be referred to as the {\em geometric predual} of $C^{k,\omega}_b(S)$.  In the present paper we study some properties of these spaces. The subject is closely related to the classical Whitney problems, see \cite{W1, W2},  asking about the characterization of trace spaces of $C^k$ functions on $\RR^n$ (see survey \cite{F3} and book \cite{BB2} and references therein for recent developments in the area). Some of the main results of the theory can be reformulated in terms of certain geometric characteristics of spaces $G_b^{k,\omega}(S)$, see sections~1.2 and 1.3.

\subsection{Finiteness Principle} For $C_b^{k,\omega}(S)$ this principle was conjectured by Yu.~Brudnyi and P.~Shvartsman in the 1980s in the 
following form (cf. \cite[p.\,210]{F3}).\smallskip

\noindent \textsl{\large{Finiteness Principle}.}  To decide whether a given $f:S\rightarrow\RR$, $S\subset\RR^n$, extends to a function $F\in C_b^{k,\omega}(\RR^n)$, it is enough to look at all restrictions $f|_{S'}$, where $S'\subset S$ is  an arbitrary $d$-element subset. Here $d$ is an integer constant depending only on $k$ and $n$.\smallskip

More precisely, if $f|_{S'}$ extends to a function $F^{S'}\in C_b^{k,\omega}(\RR^n)$ of norm at most $1$ (for each $S'\subset S$ with at most $d$ elements), then $f$ extends to a function $F\in C_b^{k,\omega}(\RR^n)$, whose norm is bounded by a constant depending only on $k$ and $n$.\smallskip

For $k=0$ (the Lipschitz case) the McShane extension theorem \cite{McS} implies the Finiteness Principle with the optimal constant $d=2$. 
Also, for $n=1$ the Finiteness Principle is valid with the optimal constant $d=k+2$. The result is essentially due to Merrien  \cite{M}.

In the multidimensional case the Finiteness Principle was proved by Yu.~Brudnyi and Shvartsman for $k=1$ with the optimal constant $d=3\cdot 2^{n-1}$, see \cite{BS1}. In the early 2000s the Finiteness Principle was proved by C.~Fefferman for all $k$ and $n$ for regular moduli of continuity $\omega$ (i.e., $\omega(1)=1$), see \cite{F1}. The upper bound for the constant $d$ in  the Fefferman proof was reduced later to $d=2^{ k+n \choose k}$ by Bierstone and P. Milman \cite{BM} and independently and by a different method by Shvartsman \cite{S}. The obtained results (and the Finiteness Principle in general) admit the following reformulation in terms of geometric characteristics of closed unit balls $B_b^{k,\omega}(S)$ of $G_b^{k,\omega}(S)$. Specifically,
let $B_b^{k,\omega}(S;m)\subset B_b^{k,\omega}(S)$, $m\in\N$, be the balanced closed convex hull of the union of all finite-dimensional balls $B_b^{k,\omega}(S')\subset G_b^{k,\omega}(S')$, $S'\subset S$, ${\rm card}\, S'\le m$.

\begin{Th}\label{teo1.5}
There exist constants $d\in\N$ and  $c\in (1,\infty)$ such that
\[
B_b^{k,\omega}(S;d)\subset B_b^{k,\omega}(S)\subset c\cdot B_b^{k,\omega}(S;d).
\]
Here for $k=0$, $d=2$ (-\,optimal) and $c=1$, for $n=1$, $d=k+2$ (-\,optimal) and $c$ depends on $k$ only, for $k=1$, $d=3\cdot 2^{n-1}$ (-\,optimal) and $c$ depends on $k$ and $n$ only, and for $k\ge 2$, $d=2^{ k+n \choose k}$ and $c=\frac{\tilde c}{\omega(1)}$, where $\tilde c$ depends on $k$ and $n$ only.
\end{Th}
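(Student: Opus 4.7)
The plan is to derive Theorem~\ref{teo1.5} from the classical Finiteness Principle (in the forms cited above) through the duality $\bigl(G_b^{k,\omega}(S)\bigr)^*\cong C_b^{k,\omega}(S)$ of Theorem~\ref{te1.2}. In this framework the theorem becomes essentially the polar-duality reformulation of the Finiteness Principle, and no analytic construction of extensions is required in the argument itself. The inclusion $B_b^{k,\omega}(S;d)\subset B_b^{k,\omega}(S)$ is immediate: for any finite $S'\subset S$ the space $G_b^{k,\omega}(S')$ sits isometrically inside $G_b^{k,\omega}(S)$, being a subspace of $\bigl(C_b^{k,\omega}(\RR^n)\bigr)^*$ with the same dual norm. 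Hence $B_b^{k,\omega}(S')\subset B_b^{k,\omega}(S)$, and since $B_b^{k,\omega}(S)$ is closed, balanced, and convex it contains the closed balanced convex hull $B_b^{k,\omega}(S;d)$.

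For the reverse inclusion I would argue by contradiction. Given $v\in B_b^{k,\omega}(S)\setminus c\cdot B_b^{k,\omega}(S;d)$, the target set is closed, convex, and balanced, so the geometric Hahn--Banach theorem produces $\phi\in\bigl(G_b^{k,\omega}(S)\bigr)^*$ with $\phi(v)>1$ and $|\phi(w)|\le 1$ on $c\cdot B_b^{k,\omega}(S;d)$. By Theorem~\ref{te1.2}, $\phi$ corresponds to some $f\in C_b^{k,\omega}(S)$ with $f(s)=\phi(\delta_s^0)$, and applying Theorem~\ref{te1.2} also to each $S'\subset S$ with ${\rm card}\,S'\le d$ identifies $\phi|_{G_b^{k,\omega}(S')}$ with the trace $f|_{S'}$. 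The assumed bound on $\phi$ therefore forces $\|f|_{S'}\|_{C_b^{k,\omega}(S')}\le 1/c$ for every such $S'$. Consequently $cf$ satisfies the hypothesis of the Finiteness Principle, so $\|cf\|_{C_b^{k,\omega}(S)}\le\tilde c$ with $\tilde c$ the relevant Finiteness Principle constant; but then $|\phi(v)|\le\|f\|_{C_b^{k,\omega}(S)}\le\tilde c/c$, contradicting $\phi(v)>1$ as soon as $c\ge\tilde c$. Thus $B_b^{k,\omega}(S)\subset\tilde c\cdot B_b^{k,\omega}(S;d)$.

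The explicit pairs $(d,c)$ listed in the theorem are now read off by inserting the sharpest known Finiteness Principle constants: $d=2,\ c=1$ (McShane) in the Lipschitz case; $d=k+2$ (Merrien) on the line; $d=3\cdot 2^{n-1}$ (Brudnyi--Shvartsman) for $k=1$; and $d=2^{\binom{k+n}{k}}$ (Bierstone--Milman, Shvartsman) for $k\ge 2$. The factor $1/\omega(1)$ in the last case appears because Fefferman's theorem is stated for regular moduli ($\omega(1)=1$); for an arbitrary $\omega$ one first rescales by $\omega(1)$, which costs a factor of $1/\omega(1)$ in the final estimate. \textbf{The main obstacle}, once the Finiteness Principle is invoked as a black box, is to verify carefully that the polar duality is compatible with passage from $S'$ to $S$, i.e.\ that $\phi|_{G_b^{k,\omega}(S')}$ really represents the trace $f|_{S'}$ under Theorem~\ref{te1.2}. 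This is immediate from the construction---both identifications are obtained by pairing against the same evaluation functionals $\delta_s^0$, and that pairing commutes with the isometric inclusion $G_b^{k,\omega}(S')\hookrightarrow G_b^{k,\omega}(S)$.
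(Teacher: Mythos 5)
Your proposal is correct and takes essentially the same route as the paper: both prove the nontrivial inclusion by Hahn--Banach separation of a putative $v\in B_b^{k,\omega}(S)\setminus c\cdot B_b^{k,\omega}(S;d)$, translate the resulting functional into a function $f\in C_b^{k,\omega}(S)$ via Theorem~\ref{te1.2}, identify $\sup_{B_b^{k,\omega}(S')}|f|$ with $\|f|_{S'}\|_{C_b^{k,\omega}(S')}$, and reach a contradiction with the Finiteness Principle inequality, with the explicit pairs $(d,c)$ read off from the cited literature. The only difference from the paper's version is your extra normalization of the separating functional; the underlying argument is the same.
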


\subsection{Complementability of Spaces ${\mathbf G_b^{k,\omega}(S)}$} We begin with a result describing bounded linear operators on $G_b^{k,\omega}(\RR^n)$. To this end,
for a Banach space $X$ by $C_b^{k,\omega}(\RR^n;X)$ we denote the Banach space of $X$-valued $C^k$ functions on $\RR^n$ with norm defined similarly to that of Definition \ref{def1} with absolute values replaced by norms $\|\cdot\|_X$ in $X$. 
Let $\mathcal L\bigl(X_1;X_2\bigr)$ stand for the Banach space of bounded linear operators between Banach spaces $X_1$ and $X_2$ equipped with the operator norm.
\begin{Th}\label{te1.6}
The restriction map to the set $\{\delta_x^0\, :\, x\in \RR^n\}\subset G_b^{k,\omega}(\RR^n)$ determines an isometric isomorphism between  $\mathcal L\bigl(G_b^{k,\omega}(\RR^n);X\bigr)$ and $C_b^{k,\omega}(\RR^n;X)$.
\end{Th}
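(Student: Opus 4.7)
My approach is to compose with functionals in $X^*$ and invoke Theorem~\ref{te1.2} scalarly (for $S=\RR^n$). Let $\Psi\colon\mathcal L(G_b^{k,\omega}(\RR^n);X)\to C_b^{k,\omega}(\RR^n;X)$ be the candidate map $\Psi(T)(x):=T(\delta_x^0)$. I would show separately that $\Psi$ is contractive and that it admits a contractive inverse, from which the isometric isomorphism follows.

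For contractivity of $\Psi$ (the main obstacle): set $f:=\Psi(T)$. For every $\phi\in X^*$, $\phi\circ T$ is a bounded functional on $G_b^{k,\omega}(\RR^n)$ of norm $\le\|\phi\|\|T\|$, so by Theorem~\ref{te1.2} it is represented by $g_\phi\in C_b^{k,\omega}(\RR^n)$ with $g_\phi(x)=\phi(f(x))$ and $\|g_\phi\|_{C_b^{k,\omega}(\RR^n)}\le\|\phi\|\|T\|$. Hence $f$ is \emph{weakly} of class $C_b^{k,\omega}$ with uniform scalar bounds, and the real work is to promote this to genuine Fr\'echet regularity of the $X$-valued function~$f$. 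Taking $\sup_{\|\phi\|\le1}$ in $g_\phi(x)=\phi(f(x))$ at once gives $\|f(x)\|_X\le\|T\|$. For partial derivatives, the scalar mean value theorem applied to each $g_\phi\in C^{1,\omega}_b(\RR^n)$ yields
\[
\|\Delta_i^{h_1}f(x)-\Delta_i^{h_2}f(x)\|_X=\sup_{\|\phi\|\le1}|\Delta_i^{h_1}g_\phi(x)-\Delta_i^{h_2}g_\phi(x)|\le\|T\|\,\omega(|h_1|+|h_2|),
\]
where $\Delta_i^h u(x):=h^{-1}(u(x+he_i)-u(x))$. Since this bound is independent of~$\phi$, the difference quotients form a Cauchy net in the complete space $X$ and converge to some $\partial_if(x)\in X$ satisfying $\phi(\partial_if(x))=\partial_ig_\phi(x)$. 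Induction on the order of differentiation (applied to each partial-derivative function, which is again weakly of class $C^{k-1,\omega}_b$ with norm $\le\|T\|$) produces $D^\alpha f(x)\in X$ for all $|\alpha|\le k$ with $\phi(D^\alpha f(x))=D^\alpha g_\phi(x)$, and the top-order modulus estimate
\[
\|D^\alpha f(x)-D^\alpha f(y)\|_X=\sup_{\|\phi\|\le1}|D^\alpha g_\phi(x)-D^\alpha g_\phi(y)|\le\|T\|\,\omega(\|x-y\|)\qquad(|\alpha|=k),
\]
together with the analogous boundedness estimates at lower orders, yields $\|f\|_{C_b^{k,\omega}(\RR^n;X)}\le\|T\|$.

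For the inverse, given $f\in C_b^{k,\omega}(\RR^n;X)$ and $\phi\in X^*$ one immediately has $\phi\circ f\in C_b^{k,\omega}(\RR^n)$ with $\|\phi\circ f\|_{C_b^{k,\omega}(\RR^n)}\le\|\phi\|\|f\|_{C_b^{k,\omega}(\RR^n;X)}$. On a finite combination $\mu=\sum_ic_i\delta_{x_i}^0$ define $T_f(\mu):=\sum_ic_if(x_i)\in X$; using Theorem~\ref{te1.2},
\[
\|T_f(\mu)\|_X=\sup_{\|\phi\|\le1}|\langle\mu,\phi\circ f\rangle|\le\|f\|_{C_b^{k,\omega}(\RR^n;X)}\|\mu\|_{G_b^{k,\omega}(\RR^n)}.
\]
Since finite combinations of evaluations are dense in $G_b^{k,\omega}(\RR^n)$ by construction, $T_f$ extends uniquely to a bounded operator with $\|T_f\|\le\|f\|_{C_b^{k,\omega}(\RR^n;X)}$ and $\Psi(T_f)=f$. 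Combined with the previous step this establishes the isometric isomorphism.

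The genuine difficulty is the weak-to-strong upgrade: weakly smooth vector-valued maps need not in general be strongly smooth, but the \emph{uniform} control furnished by Theorem~\ref{te1.2} on the family $\{g_\phi:\|\phi\|\le1\}$ provides a $\phi$-independent Cauchy estimate on difference quotients, which is exactly what is needed to realise the derivatives as honest elements of~$X$ rather than merely of~$X^{**}$.
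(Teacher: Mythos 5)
Your proof is correct, and its overall architecture matches the paper's: $\Psi(T)(x):=T(\delta_x^0)$, scalarize against $\phi\in X^*$ to control norms via Theorem~\ref{te1.2}, and obtain the inverse by defining $T_f$ on the dense linear span of evaluations. The one place where you proceed differently is the forward regularity step. You prove that $f=\Psi(T)$ is genuinely $C^{k,\omega}_b$ (not merely weakly so) by extracting a $\phi$-uniform Cauchy estimate on difference quotients from the bound $\|g_\phi\|_{C_b^{k,\omega}}\le\|\phi\|\|T\|$ and letting completeness of $X$ furnish the strong derivatives. The paper instead invokes ``the arguments of Proposition~\ref{p2.2},'' whose engine is equation~\eqref{eq2.6}: the map $x\mapsto\delta_x^\alpha$ itself has norm-convergent difference quotients in $G_b^{k,\omega}(\RR^n)$, so $x\mapsto T(\delta_x^\alpha)$ is immediately norm-differentiable in $X$ because $T$ is bounded, and the $C^{k,\omega}_b$ bounds fall out of $\|\delta_x^\alpha\|\le1$ and $\bigl\|\tfrac{\delta_x^\alpha-\delta_y^\alpha}{\omega(\|x-y\|)}\bigr\|\le1$. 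The paper's route is shorter because it does the differentiation once and for all at the level of the predual and then composes; your route redoes that calculation through the dual pairing, which makes the weak-to-strong issue visible and explicitly resolved, at the cost of a bit more bookkeeping. A minor imprecision you could tighten: for $k\ge2$ the functions $\partial_i g_\phi$ are Lipschitz with constant $\le\|T\|$, so the Cauchy modulus is $|h_1|+|h_2|$ rather than $\omega(|h_1|+|h_2|)$ (and there is a harmless factor of $2$ in either case); only at the top order $|\alpha|=k$ does $\omega$ enter. None of this affects the conclusion.
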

 Let $q_S: C_b^{k,\omega}(\RR^n)\rightarrow C_b^{k,\omega}(S)$ be the quotient map induced by the restriction of functions on $\RR^n$ to $S$. A right inverse $T\in {\mathcal L}(C_b^{k,\omega}(S); C_b^{k,\omega}(\RR^n))$ for $q_S$ (i.e., $q_S\circ T={\rm id}$) is called a {\em linear extension operator}. The set of such operators is denoted by $Ext(C_b^{k,\omega}(S); C_b^{k,\omega}(\RR^n))$.
\begin{D}\label{def1.5}
An operator $T\in Ext(C_b^{k,\omega}(S); C_b^{k,\omega}(\RR^n))$ has depth $d\in\N$ if for all $x\in\RR^n$ and $f\in C_b^{k,\omega}(S)$, 
\begin{equation}\label{equ1.6}
(Tf)(x)=\sum_{i=1}^d \lambda_i^x\cdot f(y_i^x),
\end{equation}
where $y_i^x\in S$ and $\lambda_i^x$ depend only on $x$. 
\end{D}
Linear extension operators of finite depth exist. For $k=0$ (the Lipschitz case) the  Whitney-Glaeser linear extension operators $C_{b}^{0,\omega}(S)\rightarrow C_{b}^{0,\omega}(\RR^n)$, see \cite{Gl}, have depth $d$ depending on $n$ only and norms bounded by a constant depending on $n$ only.
In the 1990s bounded linear extension operators $C_{b}^{1,\omega}(S)\rightarrow C_{b}^{1,\omega}(\RR^n)$ of depth $d$ depending on $n$ only with norms bounded by a constant depending on $n$ only were constructed by Yu.~Brudnyi and Shvartsman \cite{BS2}.  Recently bounded linear extensions operators of depth $d$ depending on $k$ and $n$ only 
were constructed by Luli \cite{Lu} for all spaces $C_b^{k,\omega}(S)$; their norms are bounded by $\frac{C}{\omega(1)}$, where $C\in (1,\infty)$ is a constant depending on $k$ and $n$ only.
(Earlier such extension operators were constructed  for finite sets $S$ by C.~Fefferman \cite[Th.\,8]{F2}.)

In the following result we identify $(G_b^{k,\omega}(S))^*$ with $C_b^{k,\omega}(S)$ by means of the isometric isomorphism of Theorem \ref{te1.2}.
\begin{Th}\label{teo1.6}
For each $T\in Ext(C_b^{k,\omega}(S); C_b^{k,\omega}(\RR^n))$ of finite depth there exists a bounded linear projection $P:G_b^{k,\omega}(\RR^n)\rightarrow G_b^{k,\omega}(S)$ whose adjoint $P^*=T$.
\end{Th}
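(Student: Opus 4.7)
The plan is to construct $P$ by first defining it on the dense linear span $\mathrm{span}\{\delta_x^0 : x \in \RR^n\} \subset G_b^{k,\omega}(\RR^n)$ and then extending by continuity. Rather than fixing one depth-$d$ representation of $T$, I set
\[
P(\mu)(g) := \mu\bigl(T(g|_S)\bigr)\qquad \text{for } \mu\in \mathrm{span}\{\delta_x^0 : x\in\RR^n\},\ g\in C_b^{k,\omega}(\RR^n).
\]
Evaluating on a single evaluation functional yields $P(\delta_x^0)(g) = (T(g|_S))(x) = \sum_{i=1}^d \lambda_i^x g(y_i^x)$, so $P(\delta_x^0) = \sum_{i=1}^d \lambda_i^x \delta_{y_i^x}^0$ is a finite combination of evaluations at points of $S$, hence an element of $G_b^{k,\omega}(S)$. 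By linearity $P$ maps the dense span into $G_b^{k,\omega}(S)$.

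Boundedness is immediate from the defining formula together with the quotient-norm inequality $\|g|_S\|_{C_b^{k,\omega}(S)} \le \|g\|_{C_b^{k,\omega}(\RR^n)}$:
\[
|P(\mu)(g)| \le \|\mu\|\cdot \|T(g|_S)\|_{C_b^{k,\omega}(\RR^n)} \le \|T\|\cdot\|\mu\|\cdot\|g\|_{C_b^{k,\omega}(\RR^n)},
\]
giving $\|P(\mu)\|\le \|T\|\cdot\|\mu\|$. Hence $P$ extends to a bounded operator $P : G_b^{k,\omega}(\RR^n) \to (C_b^{k,\omega}(\RR^n))^*$, and since $G_b^{k,\omega}(S)$ is closed and receives the image of a dense subspace, the range of the extension still lies in $G_b^{k,\omega}(S)$.

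To verify $P^* = T$, use Theorem~\ref{te1.6} (with $X=\RR$) to identify $(G_b^{k,\omega}(\RR^n))^*$ with $C_b^{k,\omega}(\RR^n)$ and Theorem~\ref{te1.2} to identify $(G_b^{k,\omega}(S))^*$ with $C_b^{k,\omega}(S)$. For $f\in C_b^{k,\omega}(S)$ the function $P^*f \in C_b^{k,\omega}(\RR^n)$ is determined by its values on evaluation functionals: $(P^*f)(x) = f(P\delta_x^0) = \sum_i \lambda_i^x f(y_i^x) = (Tf)(x)$, so $P^*f = Tf$. That $P$ is a projection onto $G_b^{k,\omega}(S)$ follows because, for $s\in S$, the extension property $q_S\circ T = \mathrm{id}$ gives $P(\delta_s^0)(g) = (T(g|_S))(s) = g|_S(s) = g(s) = \delta_s^0(g)$ for every $g\in C_b^{k,\omega}(\RR^n)$, whence $P(\delta_s^0) = \delta_s^0$; by continuity, $P$ is the identity on $G_b^{k,\omega}(S)$.

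The argument is largely formal; the only real subtlety is ensuring that $P$ takes values in $G_b^{k,\omega}(S)$ rather than in the larger dual $(C_b^{k,\omega}(\RR^n))^*$. This is precisely where the finite depth of $T$ is used: it makes $P(\delta_x^0)$ visibly a finite sum of point evaluations at points of $S$, so the image of the dense span lies in the closed subspace $G_b^{k,\omega}(S)$, and closedness then propagates the containment to the continuous extension.
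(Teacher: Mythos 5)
Your proposal is correct and follows essentially the same route as the paper: your defining formula $P(\mu)(g)=\mu\bigl(T(g|_S)\bigr)$ is precisely $q_S^*\circ T^*$, which is the operator the paper writes down directly on $\bigl(C_b^{k,\omega}(\RR^n)\bigr)^*$ and then restricts to $G_b^{k,\omega}(\RR^n)$. The only cosmetic difference is that you define $P$ on the dense span of evaluations and extend by continuity, whereas the paper defines it globally and restricts; both proofs use finite depth in exactly the same way (to place $P\delta_x^0$ in $G_b^{k,\omega}(S)$) and verify $P^*=T$ and the projection property by the same pointwise computations.
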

\begin{R}\label{rem1.7}
{\rm
It is easily seen that if $T$ has depth $d$ and is defined by  \eqref{equ1.6}, then
\[
p(x):=P(\delta_x^0)=\sum_{i=1}^d \lambda_i^x\cdot \delta_{y_i^x}^0
\quad {\rm for\ all}\quad x\in\RR^n.
\]
Moreover, $p\in C_b^{k,\omega}(\RR^n; G_b^{k,\omega}(S))$ and has norm equal to $\|T\|$ by Theorem \ref{te1.6}.
}
\end{R}
\subsection{Approximation Property}

Recall that a Banach space $X$ is said to have the {\em approximation property}, if,
for every compact set $K\subset X$ and every $\varepsilon > 0$, there exists an operator $T : X\to X$ of
finite rank so that $\|Tx-x\|\le\varepsilon$ for every $x\in K$. 

Although it is strongly believed that the class of spaces with the approximation property
includes practically all spaces which appear naturally in analysis, it is not known yet even
for the space $H^\infty$ of bounded holomorphic functions on the open unit disk.  The first example of a space which fails to have the approximation property was
constructed by Enflo \cite{E}. Since Enflo's work several other examples of such spaces were
constructed, for the references see, e.g., \cite{L}. 

A Banach space has the $\lambda$-{\em approximation property}, $1\le\lambda<\infty$, if it has the approximation property with the approximating
finite rank operators of norm $\le\lambda$. A Banach space is said to have the {\em bounded approximation property}, if it has the $\lambda$-approximation property for some $\lambda$. If $\lambda=1$, then the space is said to have the {\em metric approximation property}.

Every Banach spaces with a basis has the bounded approximation property. Also, it is known that the approximation property does not imply the bounded approximation property, see \cite{FJ}. It was established by Pe\l czy\'nski \cite{P} that a separable Banach space has the bounded approximation property if and only if it is isomorphic to a complemented subspace of a separable Banach space with a basis.

Next, for Banach spaces $X,Y$ by  ${\mathcal F}(X,Y)\subset {\mathcal L}(X,Y)$ we denote the subspace of linear bounded operators of finite rank $X\to Y$.
Let us consider the trace mapping $V$ from the projective tensor product $Y^*\hat{\otimes}_\pi X\to {\mathcal F}(X,Y)^*$ defined by
\[
(Vu)(T)={\rm trace}(Tu),\quad\text{where}\quad u\in Y^*\hat{\otimes}_\pi X,\ T\in {\mathcal F}(X,Y),
\]
that is, if $u=\sum_{n=1}^\infty y_n^*\otimes x_n$, then $(Vu)(T)=\sum_{n=1}^\infty y_n^*(Tx_n)$. 

It is easy to see that $\|Vu\|\le \|u\|_\pi$. The $\lambda$-bounded approximation property of $X$ is equivalent to the fact that
$\|u\|_\pi\le\lambda\|Vu\|$ for all Banach spaces $Y$. This well-known result (see, e.g., \cite[page 193]{DF}) is essentially due to Grothendieck \cite{G}.

Our result concerning spaces $G_b^{k,\omega}(S)$ reads  as follows.
\begin{Th}\label{te1.3}
\begin{enumerate}
\item Spaces $G_b^{k,\omega}(\RR^n)$ have the $\lambda$-approximation property with  \penalty-10000 $\displaystyle \lambda=\lambda(k,n,\omega):=1+C\cdot\lim_{t\rightarrow\infty}\,\mbox{$\frac{1}{\omega(t)}$}$,
where $C$ depends on $k$ and $n$ only.
\item All the other spaces $G_b^{k,\omega}(S)$ have the $\lambda$-approximation property with \penalty-10000 $\lambda= C'\cdot\lambda(1,n,\omega)$, where $C'$ is a constant depending on $n$ only, if $k=0,1$, and with $\lambda=\frac{C''\cdot \lambda(k,n,\omega)}{\omega(1)}$, where $C''$ is a constant depending on $k$ and $n$ only, if $k\ge 2$.
\end{enumerate}
\end{Th}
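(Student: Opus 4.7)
The theorem splits naturally into two parts, which I treat in reverse: first I reduce (2) to (1) via the complementation Theorem \ref{teo1.6}, then I construct the approximating operators on $G_b^{k,\omega}(\RR^n)$.

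For part (2), given part (1), pick a linear extension operator $T\in Ext(C_b^{k,\omega}(S);C_b^{k,\omega}(\RR^n))$ of finite depth: Luli's operator with $\|T\|\le C''/\omega(1)$ for $k\ge 2$, and the Brudnyi--Shvartsman or Whitney--Glaeser operator with $\|T\|$ depending only on $n$ for $k=0,1$. Theorem \ref{teo1.6} produces a bounded projection $P:G_b^{k,\omega}(\RR^n)\to G_b^{k,\omega}(S)$ with $\|P\|=\|T\|$ and $P\circ\iota_S=\mathrm{id}_{G_b^{k,\omega}(S)}$, where $\iota_S:G_b^{k,\omega}(S)\hookrightarrow G_b^{k,\omega}(\RR^n)$ is the natural inclusion (the identity $P\iota_S=\mathrm{id}$ follows from $q_S T=\mathrm{id}$ by dualizing and using $q_S^*|_{G_b^{k,\omega}(S)}=\iota_S$). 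If $\{T_n\}$ witness part (1) with $\sup_n\|T_n\|\le\lambda(k,n,\omega)$, then $\{P\circ T_n\circ\iota_S\}$ are finite-rank operators on $G_b^{k,\omega}(S)$ of norm $\le\|P\|\cdot\lambda(k,n,\omega)$ that converge strongly to the identity by continuity of $P$; combining with the bounds on $\|T\|$ produces the stated constants (the use of $\lambda(1,n,\omega)$ rather than $\lambda(0,n,\omega)$ in the $k=0$ case simply reflects that one may use the same construction of part (1) as for $k=1$).

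For part (1), I construct each $T_n$ as the predual of a weak$^*$-to-weak$^*$ continuous finite-rank Jackson-type operator
\[
(S_n f)(x)=\chi_n(x)\sum_{j=1}^{N_n}\phi_j^n(x)\,\widetilde{P}^n_{y_j^n,k}f(x)
\]
on $C_b^{k,\omega}(\RR^n)$, where $\chi_n$ is a smooth cutoff equal to $1$ on $B(0,R_n)$ with $R_n\to\infty$ and supported in $B(0,2R_n)$, $\{y_j^n\}$ is a grid of spacing $h_n\to 0$ inside $\mathrm{supp}\,\chi_n$, $\{\phi_j^n\}$ is an associated smooth partition of unity with $\sum_j\phi_j^n\equiv 1$ on $B(0,2R_n)$, and $\widetilde{P}^n_{y_j^n,k}f$ is a polynomial of degree $\le k$ approximating the Taylor polynomial of $f$ at $y_j^n$ via finite differences of $f$ on the scale $h_n$. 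This realizes $(S_n f)(x)$ as a finite linear combination of point-values of $f$, so $S_n$ is finite-rank and weak$^*$-to-weak$^*$ continuous, with predual $T_n\delta_x^0=\chi_n(x)\sum_{j,\ell}\phi_j^n(x)\,\alpha_\ell^{j,n}\,\delta_{z_\ell^{j,n}}^0\in G_b^{k,\omega}(\RR^n)$. Pointwise convergence $T_n g\to g$ on the dense subspace $\mathrm{span}\{\delta_x^0:x\in\RR^n\}\subset G_b^{k,\omega}(\RR^n)$ follows from the Jackson bound $\|T_n\delta_x^0-\delta_x^0\|_{G}\lesssim h_n^k\omega(h_n)\to 0$ once $x\in B(0,R_n)$, combined with norm-continuity of $x\mapsto\delta_x^0$; it then extends to all of $G_b^{k,\omega}(\RR^n)$ by uniform boundedness, which is the separable-space version of the $\lambda$-approximation property.

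The technical heart of the proof, and the step I expect to be the main obstacle, is the operator bound $\|S_n\|_{C^{k,\omega}_b\to C^{k,\omega}_b}\le 1+C/\omega(\infty)$. The $C^k_b$ portion of the target norm is controlled by $\|f\|_{C^k_b}$ via standard Jackson estimates for $\widetilde{P}^n$ and the partition of unity, provided $\chi_n$ and $\phi_j^n$ are chosen so that their derivatives of order $\le k$ give negligible contributions (for instance by rescaling cardinal-B-spline-type partitions at dyadic scales and taking $R_n$ sufficiently large relative to $h_n$). For the $(k,\omega)$ seminorm of $S_n f$, pairs $(x,y)$ both lying in the cutoff region yield an $O(1)\|f\|_{C^{k,\omega}_b}$ contribution by classical Jackson-type estimates, while pairs with one point far outside $\mathrm{supp}\,\chi_n$ produce terms of the form $|D^\alpha S_n f(x)|/\omega(\|x-y\|)$ which saturate at $\|f\|_{C^k_b}/\omega(\infty)$ as $\|x-y\|\to\infty$, contributing precisely the additive $C/\omega(\infty)$. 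Balancing these estimates through a simultaneous choice of $R_n$, $h_n$, $\chi_n$, and the partition of unity requires a delicate multiscale analysis tailored to the modulus $\omega$.
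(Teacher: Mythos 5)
Your treatment of part (2) coincides with the paper's: apply Theorem \ref{teo1.6} to the Glaeser, Brudnyi--Shvartsman, and Luli extension operators to get a projection $P:G_b^{k,\omega}(\RR^n)\to G_b^{k,\omega}(S)$, and compose with the approximating operators from part (1); this part is fine.

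For part (1) you take a genuinely different route. The paper's construction is global and Fourier-analytic: it truncates $f$ by a slowly-varying cutoff $\rho_\ell$, periodizes with period $8\ell\sqrt n$, and convolves with a tensor-product Jackson kernel (Lemma \ref{lem2.2}). The convolution step has operator norm exactly $\le 1$ on $C_b^{k,\omega}$ because the Jackson kernel is a probability density, so the entire $1+C/\omega(\infty)$ loss in Lemma \ref{norm} comes from multiplying by the cutoff, where the factor $\left(1+\frac1\ell\right)^{k}\cdot\frac{4\ell\sqrt n(k+1)}{\ell\,\omega(4\ell\sqrt n)}$ is transparent. You instead propose a local, Whitney/quasi-interpolant construction: a partition of unity $\{\phi_j^n\}$ at scale $h_n$ together with finite-difference Taylor-polynomial approximants $\widetilde P^n_{y_j^n,k}f$, all multiplied by a large-scale cutoff $\chi_n$. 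The finite-rank property and weak$^*$-to-weak$^*$ continuity (via Proposition \ref{prop3.1}) and the pointwise-convergence-on-a-dense-span argument are sound in outline.

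The gap is precisely where you flag it, and I would say it is a genuine one rather than a routine technicality. You assert $\|S_n\|_{C_b^{k,\omega}\to C_b^{k,\omega}}\le 1+C/\omega(\infty)$, i.e. that the local quasi-interpolation step contributes no more than $1+o(1)$. This is not automatic: when you apply Leibniz to $D^\alpha\bigl(\phi_j^n\cdot\widetilde P^n_{y_j^n,k}f\bigr)$ the derivatives of $\phi_j^n$ scale like $h_n^{-|\beta|}$ and must be beaten by the local errors $D^{\alpha-\beta}(\widetilde P^n f-f)=O(h_n^{|\beta|}\omega(h_n))$; the top-order term $\sum_j\phi_j^n\,D^\alpha\widetilde P^n_{y_j^n,k}f$ then has to reproduce $D^\alpha f$ up to a seminorm factor of $1+o(1)$, and the remaining cross terms must be genuinely $o(1)$ \emph{uniformly in the Hölder quotient}, not just pointwise. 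Whitney-type partition-of-unity operators generically give a bounded operator norm, not one tending to $1$, and making the leading constant exactly $1+o(1)$ requires the finite-difference functionals $\widetilde P^n$ to have coefficients with total variation $1+o(1)$ in a compatible way at every multi-index order $\le k$. Nothing in your sketch secures this; the ``delicate multiscale analysis tailored to $\omega$'' you defer to is, in fact, the theorem. The paper sidesteps the entire issue by using a positive (Jackson) convolution kernel, for which contraction in every $C_b^{k,\omega}$ seminorm is immediate, and isolates the unavoidable loss in the cutoff alone. If you want to keep your local construction you would need to replace $\widetilde P^n$ by a concrete quasi-interpolant with provably near-$1$ norm on $C_b^{k,\omega}$ (for instance a local Jackson/de la Vall\'ee Poussin mean rather than a finite-difference Taylor fit) and carry out the Leibniz/Hölder bookkeeping explicitly; as written, part (1) is not proved.
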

If $\lim_{t\rightarrow\infty}\omega(t)=\infty$, then (1) implies that the corresponding space $G_b^{k,\omega}(\RR^n)$ has the metric approximation property. In case $\lim_{t\rightarrow\infty}\omega(t)<\infty$, one can define the new modulus of continuity $\widetilde\omega$ (cf. properties (i) and (ii) in its definition) by the formula
\[
\widetilde\omega(t)=\max\{\omega(t),t\},\quad t\in (0,\infty).
\]
It is easily seen  that spaces $C_b^{k,\omega}(\RR^n)$ and $C_b^{k,\widetilde\omega}(\RR^n)$ are isomorphic. Thus $G_b^{k,\omega}(\RR^n)$ is isomorphic to space $G_b^{k,\widetilde\omega}(\RR^n)$ having the metric approximation property. However, the distortion of this isomorphism depends  on $\omega$. So, in general, it is not clear whether $G_b^{k,\omega}(\RR^n)$ itself has the metric approximation property.

In fact, in some cases spaces $G_b^{k,\omega}(S)$ still have the metric approximation property. E.g., by the classical result of Grothendieck \cite[Ch.\,I]{G},
separable dual spaces with the approximation property have the metric approximation property.
The class of such spaces $G_b^{k,\omega}(S)$ is studied in the next section.
\begin{R}\label{k}
{\rm It is not known, even for the case $k=0$, whether all spaces $C_b^{k,\omega}(\RR^n)$ have the approximation property (for some results in this direction for $k=0$ see, e.g., \cite{K}).
}
\end{R}
At the end of this section we formulate a result describing the structure of operators in ${\mathcal L}(G_b^{k,\omega}(\RR^n);X)$, where $X$ is a separable Banach space with the $\lambda$-approximation property. In particular, it can be applied to $X=G_b^{k,\omega}(S)$ and $\lambda:=\lambda(S,k,n,\omega)$ the constant of the approximation property for $G_b^{k,\omega}(S)$ of Theorem \ref{te1.3}\,(2).
\begin{Th}\label{teor1.10}
There exists the family of norm one vectors $\{v_j\}_{j\in\N}\subset X$ and given  $H\in {\mathcal L}(G_b^{k,\omega}(\RR^n);X)$ the family of functions $\{h_j\}_{j\in\N}\subset C_b^{k,\omega}(\RR^n)$ of norms $\le 32\cdot\lambda^2\cdot\|H\|$ such that for all $x\in\RR^n$,  $\alpha\in\Z_+^n$, $|\alpha|\le k$,
\begin{equation}\label{equa1.7}
H(\delta_x^\alpha)=\sum_{j=1}^\infty D^\alpha h_j(x)\cdot v_j
\end{equation}
(convergence in $X$).
\end{Th}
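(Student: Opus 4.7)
The strategy is to pass from the operator $H$ to an $X$-valued function via Theorem \ref{te1.6}, then expand that function coordinate-wise against a fixed Schauder-basis-like decomposition of $X$ furnished by its $\lambda$-approximation property.

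\emph{First,} by Theorem \ref{te1.6} the correspondence $H \mapsto h(x):=H(\delta_x^0)$ is an isometric isomorphism $\mathcal L(G_b^{k,\omega}(\RR^n);X)\cong C_b^{k,\omega}(\RR^n;X)$, so $\|h\|_{C_b^{k,\omega}(\RR^n;X)}=\|H\|$. Each $\delta_x^\alpha$ with $|\alpha|\le k$ lies in $G_b^{k,\omega}(\RR^n)$ because it is the norm limit of iterated divided differences of the functionals $\delta_y^0$ (the difference quotients of order $|\alpha|$ have uniformly bounded norm on $C_b^{k,\omega}(\RR^n)$ by Taylor's formula). By linearity and continuity, $H(\delta_x^\alpha)=D^\alpha h(x)$.

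\emph{Second,} using the separability of $X$ and the $\lambda$-approximation property, I would construct---independently of $H$---a sequence of norm-one vectors $\{v_j\}_{j\in\N}\subset X$ and continuous linear functionals $\{\phi_j\}_{j\in\N}\subset X^*$ with $\|\phi_j\|\le 32\lambda^2$ such that
\[
x=\sum_{j=1}^\infty \phi_j(x)\,v_j\qquad\text{(norm convergence in }X\text{)}
\]
for every $x\in X$. The natural route is Pe\l czy\'nski's theorem: $X$ is isomorphic to a complemented subspace of a separable Banach space $Y$ with a Schauder basis $\{e_j\}$ and biorthogonal functionals $\{e_j^*\}$; with inclusion $\iota:X\hookrightarrow Y$ and projection $P:Y\to X$, one sets $v_j:=P(e_j)/\|P(e_j)\|$ and $\phi_j:=\|P(e_j)\|\,e_j^*\circ\iota$ (skipping any indices with $P(e_j)=0$). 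Then $x=P\iota x=\sum_j e_j^*(\iota x)P(e_j)=\sum_j \phi_j(x)v_j$, and $\|\phi_j\|\le \|\iota\|\cdot\|e_j^*\|\cdot\|P(e_j)\|\le 2K\|P\|\|\iota\|$ where $K$ is the basis constant. Careful bookkeeping of the Pe\l czy\'nski construction (with $\lambda$-BAP entering both through the basis constant of $Y$ and through the projection norm of $P$) yields the bound $32\lambda^2$.

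\emph{Third,} set $h_j:=\phi_j\circ h:\RR^n\to\RR$. Since $\phi_j$ is bounded linear on $X$ and $h\in C_b^{k,\omega}(\RR^n;X)$, we have $h_j\in C_b^{k,\omega}(\RR^n)$ with
\[
\|h_j\|_{C_b^{k,\omega}(\RR^n)}\le \|\phi_j\|\cdot\|h\|_{C_b^{k,\omega}(\RR^n;X)}\le 32\lambda^2\|H\|.
\]
Continuity and linearity of $\phi_j$ give $D^\alpha h_j(x)=\phi_j(D^\alpha h(x))$. Applying the decomposition of the second step to the vector $D^\alpha h(x)\in X$ produces
\[
H(\delta_x^\alpha)=D^\alpha h(x)=\sum_{j=1}^\infty \phi_j(D^\alpha h(x))\,v_j=\sum_{j=1}^\infty D^\alpha h_j(x)\,v_j,
\]
with convergence in $X$, exactly \eqref{equa1.7}.

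\emph{Main obstacle.} The substantive step is the second one: producing the $H$-independent basis-like decomposition of $X$ with the quadratic dependence on $\lambda$. The factor $\lambda^2$ is the price for realizing $X$ as a complemented subspace of a space with a Schauder basis---$\lambda$ enters both in the basis constant of the enveloping space $Y$ and in the norm of the projection onto $X$---while the numerical constant $32$ accumulates from normalizing the basis vectors, from Hahn--Banach extensions of biorthogonal functionals, and from the telescoping implicit in the Pe\l czy\'nski construction. Everything else in the proof is a clean consequence of Theorem \ref{te1.6} and the fact that bounded linear functionals commute with $D^\alpha$ on $C_b^{k,\omega}$-valued functions.
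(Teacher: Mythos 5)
Your proposal is correct and takes essentially the same approach as the paper: Pe\l czy\'nski's theorem is used to embed $X$ as a complemented subspace of a separable space $Y$ with a norm-one monotone basis, the operator $H$ is transferred to $Y$ and expanded in the basis, and the $h_j$, $v_j$ are obtained by projecting back and normalizing, with $32\lambda^2=2\cdot(4\lambda)\cdot(4\lambda)$ coming from $\|b_j^*\|\le 2$, the distortion $\|T\|\cdot\|T^{-1}\|\le 4\lambda$, and the projection norm $\le 4\lambda$. The only slip is the parenthetical claim that $\lambda$ enters ``through the basis constant of $Y$''---the basis is monotone (constant $1$) and $\lambda$ enters instead through the distortion of the embedding and the norm of the projection, as your own bound $\|\phi_j\|\le 2\|P\|\|\iota\|$ already reflects.
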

\begin{R}
{\rm If $X=G_b^{k,\omega}(S)$ and $H\in\mathcal L(G_b^{k,\omega}(\RR^n);G_b^{k,\omega}(S))$ is a projection onto $G_b^{k,\omega}(S)$, then in addition to \eqref{equa1.7} we have
\begin{equation}\label{equa1.8}
\delta_x^0=\sum_{j=1}^\infty  h_j(x)\cdot v_j\quad {\rm for\ all}\quad x\in S.
\end{equation}
In this case, the adjoint $H^*$ of $H$ belongs to $Ext(C_b^{k,\omega}(S);C_b^{k,\omega}(\RR^n))$ and for all $x\in\RR^n$,  $\alpha\in\Z_+^n$, $|\alpha|\le k$, the extension $H^*f$ of $f\in C_b^{k,\omega}(S)$ satisfies
\begin{equation}\label{equa1.9}
D^\alpha (H^*f)(x):=\sum_{j=1}^\infty D^\alpha h_j(x)\cdot f(v_j).
\end{equation}
}
\end{R}
\subsection{Preduals of ${\mathbf G_b^{k,\omega}(S)}$ Spaces} 
Let  $C_{0}^{k,\omega}(\RR^n)$ be the subspace of functions $f\in C_b^{k,\omega}(\RR^n)$ such that
\begin{itemize}
\item[(i)] for all $\alpha\in\Z_+^n$, $0\le |\alpha|\le k$,
\[
\lim_{\|x\|\rightarrow\infty}D^\alpha f(x)=0;
\]
\item[(ii)] for all $\alpha\in\Z_+^n$, $|\alpha|=k$,
\[
\lim_{\|x-y\|\rightarrow 0}\frac{D^\alpha f(x)-D^\alpha f(y)}{\omega(\|x-y\|)}=0.
\]
\end{itemize}
It is easily seen that $C^{k,\omega}_0(\RR^n)$ equipped with the norm induced from $C^{k,\omega}_b(\RR^n)$ is a Banach space. By  $C^{k,\omega}_0(S)$ we denote the trace of $C^{k,\omega}_0(\RR^n)$ to a closed subset $S\subset\RR^n$ equipped with the trace norm. 

If $\lim_{t\rightarrow 0^+}\,\frac{t}{\omega(t)}> 0$ (see condition (i) for $\omega$ in section~1.1), then clearly, the corresponding space $C^{k,\omega}_0(\RR^n)$ is trivial. Thus we may naturally assume that $\omega$ satisfies the condition
\begin{equation}\label{omega2}
\lim_{t\rightarrow 0^+}\,\frac{t}{\omega(t)}=0.
\end{equation}

In the sequel, the weak$^*$ topology of $C_b^{k,\omega}(S)$ is defined by means of functionals in $G_b^{k,\omega}(S)\subset  
\bigl(C_b^{k,\omega}(\RR^n)\bigr)^*$. Convergence in the weak$^*$ topology is described in section~4.2.
\begin{Th}\label{te1.4}
Suppose $\omega$ satisfies condition \eqref{omega2}. 
\begin{enumerate}
\item
Space
$\bigl(C^{k,\omega}_0(\RR^n)\bigr)^*$ is isomorphic to $G_b^{k,\omega}(\RR^n)$, isometrically if $\displaystyle \lim_{t\rightarrow\infty}\omega(t)=\infty$.
\item If there exists a weak$^*$ continuous operator 
$T\in Ext(C_b^{k,\omega}(S);C_b^{k,\omega}(\RR^n))$ such that $T\bigl(C_0^{k,\omega}(S)\bigr)\subset C_0^{k,\omega}(\RR^n)$, then $\bigl(C^{k,\omega}_0(S)\bigr)^*$ is isomorphic to $G_b^{k,\omega}(S)$.
\end{enumerate}
\end{Th}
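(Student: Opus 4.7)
For Part~(1), the plan is to realize the desired isomorphism via the natural restriction map $J : G_b^{k,\omega}(\RR^n) \to (C_0^{k,\omega}(\RR^n))^*$, $J(g)(f) := g(f)$. Clearly $\|J\|\le 1$. The main analytic tool is a two-parameter family of smoothing/cutoff operators
\[
A_{R,\tau} : C_b^{k,\omega}(\RR^n) \to C_0^{k,\omega}(\RR^n),\qquad A_{R,\tau}f := \eta_R \cdot (f * \psi_\tau),
\]
where $\eta_R(x) := \eta(x/R)$ for a fixed $\eta \in C_c^\infty(\RR^n)$ equal to $1$ near the origin, and $\psi_\tau$ is a standard smooth mollifier of support radius $\tau$. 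A Leibniz-rule computation, using the monotonicity of $t\mapsto t/\omega(t)$ and the support of $\eta_R$, yields $\|A_{R,\tau}\|_{C_b^{k,\omega}\to C_b^{k,\omega}} \le 1 + C/\omega(R)$ uniformly in $\tau$; in particular this is uniformly bounded and tends to~$1$ as $R\to\infty$ when $\omega(t)\to\infty$. Moreover $A_{R,\tau}f \to f$ pointwise with derivatives of order $\le k$, and in $C_b^{k,\omega}$-norm when $f\in C_0^{k,\omega}(\RR^n)$.

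Injectivity of $J$: suppose $g\in G_b^{k,\omega}(\RR^n)$ vanishes on $C_0^{k,\omega}(\RR^n)$, and write $g = \lim_n g_n$ in norm with $g_n = \sum_i \lambda_i^{(n)}\delta^0_{x_i^{(n)}}$. Since $A_{R,\tau}f\in C_0^{k,\omega}$, one has $g(f) = g(f - A_{R,\tau}f)$; split this as $[g(f-A_{R,\tau}f) - g_n(f-A_{R,\tau}f)] + g_n(f-A_{R,\tau}f)$, bounded by $\|g-g_n\|(1+C)\|f\|_{C_b^{k,\omega}} + |g_n(f-A_{R,\tau}f)|$, and choose $n$ large first and then $R,\tau$ to conclude $g = 0$. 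For surjectivity, write $A_{R,\tau}f(x) = \int \eta_R(x)\psi_\tau(x-z)f(z)\,dz$ and interchange $\phi \in (C_0^{k,\omega}(\RR^n))^*$ with the integral (both sides depend continuously on $f$) to get
\[
\phi(A_{R,\tau}f) = \int h_{R,\tau}(z)f(z)\,dz,\qquad h_{R,\tau}(z) := \phi\bigl(\eta_R(\cdot)\psi_\tau(\cdot-z)\bigr)\in C_c(\RR^n),
\]
so $\phi\circ A_{R,\tau}$ is a norm-limit in $(C_b^{k,\omega})^*$ of Riemann sums $\sum h_{R,\tau}(z_i)\Delta z_i\,\delta^0_{z_i}$, hence belongs to $G_b^{k,\omega}(\RR^n)$. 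The \textbf{main obstacle} is to extract from this net a single $g\in G_b^{k,\omega}(\RR^n)$ with $J(g) = \phi$: the pointwise convergence $\phi\circ A_{R,\tau}(f)\to\phi(f)$ on $C_0^{k,\omega}$ is not uniform, so the net is not $(C_b^{k,\omega})^*$-norm Cauchy, and $G_b^{k,\omega}(\RR^n)$ is not weak*-closed in $(C_b^{k,\omega})^*$. I plan to resolve this via the $\lambda$-approximation property of $G_b^{k,\omega}(\RR^n)$ from Theorem~\ref{te1.3}(1): replace $A_{R,\tau}$ by a sequence of finite-rank approximants whose ranges lie in $C_0^{k,\omega}(\RR^n)$, and arrange the pullbacks to form a norm-Cauchy sequence in $G_b^{k,\omega}$, whose limit (in the closed subspace) is the desired preimage. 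The isometric statement, when $\omega(t)\to\infty$, then follows because $\|A_{R,\tau}\|_{C_b^{k,\omega}}\to 1$ combined with the injectivity argument gives $|g(f)| = \lim|g(A_{R,\tau}f)| \le \|g\|_{(C_0)^*}\,\limsup\|A_{R,\tau}f\|_{C_b^{k,\omega}} = \|g\|_{(C_0)^*}\|f\|_{C_b^{k,\omega}}$, and the reverse inequality is trivial.

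For Part~(2), the weak*-continuity of $T$, together with the identifications $C_b^{k,\omega}(\cdot) = (G_b^{k,\omega}(\cdot))^*$ of Theorem~\ref{te1.2}, yields a unique bounded pre-adjoint $P : G_b^{k,\omega}(\RR^n) \to G_b^{k,\omega}(S)$ with $P^* = T$. Dualizing $q_S \circ T = \mathrm{id}_{C_b^{k,\omega}(S)}$ gives $P\circ\iota = \mathrm{id}_{G_b^{k,\omega}(S)}$, with $\iota : G_b^{k,\omega}(S)\hookrightarrow G_b^{k,\omega}(\RR^n)$ the inclusion, so $P$ is a bounded projection. The hypothesis $T(C_0^{k,\omega}(S))\subset C_0^{k,\omega}(\RR^n)$ makes $T^{(0)} := T|_{C_0^{k,\omega}(S)}$ a linear extension for the $C_0$-quotient $q_S^{(0)}$, exhibiting $C_0^{k,\omega}(S)$ as a direct summand of $C_0^{k,\omega}(\RR^n)$. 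Dualizing, $(C_0^{k,\omega}(S))^*$ becomes a direct summand of $(C_0^{k,\omega}(\RR^n))^* \cong G_b^{k,\omega}(\RR^n)$ (by Part~(1)), equal to $\operatorname{Im}(q_S^{(0)})^* = (\ker q_S^{(0)})^\perp$. The inclusion $G_b^{k,\omega}(S)\subset (\ker q_S^{(0)})^\perp$ is immediate from $\delta_s^0(f) = f(s) = 0$ when $f|_S = 0$. For the reverse, let $g \in (\ker q_S^{(0)})^\perp$: for any $f \in C_0^{k,\omega}(\RR^n)$, by the hypothesis $f - T(f|_S)\in C_0^{k,\omega}(\RR^n)$ and vanishes on $S$, so $g(f) = g(T(f|_S)) = (\iota Pg)(f)$, and Part~(1)'s injectivity yields $g = \iota Pg \in G_b^{k,\omega}(S)$. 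This identifies $(\ker q_S^{(0)})^\perp$ with $G_b^{k,\omega}(S)$ and completes the proof; given Part~(1), the additional work in Part~(2) is just the annihilator identification, which is where the hypothesis $T(C_0^{k,\omega}(S))\subset C_0^{k,\omega}(\RR^n)$ enters essentially.
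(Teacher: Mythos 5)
Your outline for Part~(2) is correct and is, in fact, a somewhat cleaner route than the paper's: the paper carries out a diagram chase with the two commutative squares of adjoints of $i$, $i_S$, $q_S$, $q_{S0}$, $T$, $T_0$, whereas your identification of $\bigl(C_0^{k,\omega}(S)\bigr)^*$ with the annihilator $(\ker q_S^{(0)})^\perp$ inside $\bigl(C_0^{k,\omega}(\RR^n)\bigr)^*\cong G_b^{k,\omega}(\RR^n)$ and the one-line verification $g(f)=g(T(f|_S))=(\iota P g)(f)$ for $g\in(\ker q_S^{(0)})^\perp$ is more transparent. Your injectivity argument for Part~(1) (weak$^*$ density of $C_0^{k,\omega}(\RR^n)$ via the cutoff-mollifier operators $A_{R,\tau}$, followed by a $3\varepsilon$-splitting against a norm-approximating sequence $g_n$ of finite linear combinations of $\delta^0$'s) is also sound and parallel to the paper's Proposition~\ref{prop4.2} and Corollary~\ref{cor4.3}.

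The gap is precisely where you flag it: surjectivity of $J$ in Part~(1). Your proposed repair --- replace $A_{R,\tau}$ by finite-rank approximants $U_N$ on $G_b^{k,\omega}(\RR^n)$ with $U_N^*(C_b^{k,\omega}(\RR^n))\subset C_0^{k,\omega}(\RR^n)$ and ``arrange the pullbacks to form a norm-Cauchy sequence'' --- does not work as stated. Writing $\psi_N:=\phi\circ U_N^*$, the only available estimate is
\[
\|\psi_N-\psi_M\|_{(C_b^{k,\omega}(\RR^n))^*}\le\|\phi\|\cdot\|U_N^*-U_M^*\|=\|\phi\|\cdot\|U_N-U_M\|,
\]
and the approximation property delivers only pointwise (uniformly-on-compacta) convergence $U_N\to\mathrm{id}$, never operator-norm convergence; if $\|U_N-U_M\|\to 0$ then $\mathrm{id}$ would be compact. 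A Banach--Alaoglu subnet of $\{\psi_N\}$ exists and its weak$^*$ limit $g$ does satisfy $J(g)=\phi$, but there is no reason for that limit to lie in $G_b^{k,\omega}(\RR^n)$, since $G_b^{k,\omega}(\RR^n)$ is not weak$^*$ closed in $\bigl(C_b^{k,\omega}(\RR^n)\bigr)^*$ (indeed its weak$^*$ closure is the whole dual). Because the isometry claim for $\omega(t)\to\infty$ also presupposes surjectivity, that part is incomplete as well.

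The paper avoids this difficulty entirely by a structurally different route that does not extract limits of approximants at all. It builds the locally compact space $M_{n,k}$ (one copy of $\RR^n$ for each $|\alpha|\le k$, one copy of $\RR^n\times\RR^n\setminus\Delta_n$ for each $|\alpha|=k$), isometrically embeds $C_0^{k,\omega}(\RR^n)\hookrightarrow C_0(M_{n,k})$ via $f\mapsto\bigl(D^\alpha f,\ \frac{D^\alpha f(x)-D^\alpha f(y)}{\omega(\|x-y\|)}\bigr)$, and uses Hahn--Banach plus the Riesz representation theorem to realize every $\phi\in\bigl(C_0^{k,\omega}(\RR^n)\bigr)^*$ as $\mathcal I_0^*(\mu)$ for a regular Borel measure $\mu$ on $M_{n,k}$. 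The crucial step (Proposition~\ref{prop4.4}) then writes $\mathcal I_0^*(\mu)$ \emph{explicitly} as a Bochner integral of the bounded continuous map $x\mapsto i^*(\delta_x^\alpha)$ (and the corresponding difference-quotient map) against the slices of $\mu$; a Bochner integral with values in the closed subspace $i^*(G_b^{k,\omega}(\RR^n))$ stays in it, so $\mathrm{range}\,\mathcal I_0^*\subset i^*(G_b^{k,\omega}(\RR^n))$, and surjectivity of $\mathcal I_0^*$ forces equality. Interestingly, your observation that $\phi\circ A_{R,\tau}=\int h_{R,\tau}(z)\,\delta^0_z\,dz$ is itself a Bochner-integral representation; what is missing from your argument is to represent $\phi$ itself (rather than its regularizations) as such an integral, and the $M_{n,k}$/Riesz device is exactly what makes that possible.
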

From the first part of the theorem we obtain (for $\omega$ satisfying \eqref{omega2}):
\begin{C}\label{cor1.10}
The space of $C^\infty$ functions with compact supports on $\RR^n$ is dense in $C^{k,\omega}_0(\RR^n)$. In particular, all spaces $C^{k,\omega}_0(S)$ are separable.
\end{C}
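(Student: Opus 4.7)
The plan is to combine mollification with a smooth cutoff to approximate any $f \in C_0^{k,\omega}(\RR^n)$ by a sequence in $C_c^\infty(\RR^n)$ convergent in the $C_b^{k,\omega}$-norm. Fix a nonnegative $\phi \in C_c^\infty(\RR^n)$ with $\int \phi = 1$ and $\mathrm{supp}\,\phi \subset B(0,1)$, and set $\phi_\epsilon(x) := \epsilon^{-n}\phi(x/\epsilon)$. Fix $\chi \in C_c^\infty(\RR^n)$ with $\chi \equiv 1$ on $B(0,1)$ and $\chi \equiv 0$ outside $B(0,2)$, and set $\chi_R(x) := \chi(x/R)$ so that $\|D^\beta \chi_R\|_\infty = O(R^{-|\beta|})$. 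The approximants are $\psi_{\epsilon,R} := \chi_R \cdot (\phi_\epsilon * f) \in C_c^\infty(\RR^n)$, and I would show $\|\psi_{\epsilon,R} - f\|_{C_b^{k,\omega}} \to 0$ along a suitable diagonal $\epsilon \to 0^+$, $R \to \infty$. Note that $C_c^\infty(\RR^n) \subset C_0^{k,\omega}(\RR^n)$ because \eqref{omega2} forces $\|x-y\|/\omega(\|x-y\|) \to 0$ as $\|x-y\|\to 0^+$, so Lipschitz top derivatives automatically satisfy condition (ii).

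The estimate splits into two independent pieces. For the mollification, $\|\phi_\epsilon * f - f\|_{C_b^{k,\omega}} \to 0$ as $\epsilon \to 0^+$: sup-norm convergence of derivatives of order $\le k$ follows from uniform continuity of each $D^\alpha f$. The crucial point is the top-order modulus-of-continuity seminorm. Setting $h := \phi_\epsilon * D^\alpha f - D^\alpha f$ for $|\alpha|=k$, condition (ii) in the definition of $C_0^{k,\omega}$ yields a nondecreasing $\eta(t)\downarrow 0$ as $t\to 0^+$ with $|D^\alpha f(x) - D^\alpha f(y)| \le \eta(\|x-y\|)\omega(\|x-y\|)$. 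A case split on $\|x-y\| \le \epsilon$ (bounding the integrand pointwise by $2\eta(\|x-y\|)\omega(\|x-y\|)$) versus $\|x-y\| > \epsilon$ (using $\|h\|_\infty \le \eta(\epsilon)\omega(\epsilon)$ and monotonicity of $\omega$) gives the uniform bound $|h(x) - h(y)|/\omega(\|x-y\|) \le 2\eta(\epsilon) \to 0$. For the truncation, $\|\chi_R g - g\|_{C_b^{k,\omega}} \to 0$ as $R \to \infty$ with $g = \phi_\epsilon * f$: Leibniz expansion of $D^\alpha((\chi_R - 1)g)$, the scaling of $\chi_R$, and condition (i) that each $D^\alpha g$ vanishes at infinity control both sup-norm and modulus-of-continuity contributions, with \eqref{omega2} preventing degeneration.

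The main obstacle is the modulus-of-continuity estimate in the mollification step, which genuinely requires condition (ii): no analogous bound holds on the full $C_b^{k,\omega}$, since for a general $f$ the quotient $|D^\alpha f(x) - D^\alpha f(y)|/\omega(\|x-y\|)$ need not vanish as $\|x-y\|\to 0^+$. The role of Theorem \ref{te1.4}(1) is implicit here: the same approximation machinery underpins the identification $(C_0^{k,\omega}(\RR^n))^* \cong G_b^{k,\omega}(\RR^n)$, where one shows that every functional on $C_0^{k,\omega}$ is realized as an element of the closure of $\mathrm{span}\{\delta_x^0\}$, and density of $C_c^\infty$ emerges naturally in that argument.

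For the separability statement in the second sentence, $C_c^\infty(\RR^n)$ is separable in the $C_b^{k,\omega}$-norm: for each $R \in \N$ the $C_c^\infty$ functions supported in $B(0,R)$ form a subspace of the separable Fr\'echet space $C^\infty(\overline{B(0,R+1)})$ and so are separable, and the countable union over $R$ remains separable. The density of $C_c^\infty(\RR^n)$ then makes $C_0^{k,\omega}(\RR^n)$ separable, and finally $C_0^{k,\omega}(S)$ is separable as a quotient of $C_0^{k,\omega}(\RR^n)$ under the restriction map $q_S$.
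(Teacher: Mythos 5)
Your argument is correct, but it is genuinely different from the paper's. The paper derives Corollary~\ref{cor1.10} as a \emph{consequence} of Theorem~\ref{te1.4}(1): writing $X$ for the norm-closure of $C_c^\infty$ in $C_0^{k,\omega}(\RR^n)$, one supposes $X\neq C_0^{k,\omega}(\RR^n)$, takes a Hahn--Banach separating functional $\lambda\in\bigl(C_0^{k,\omega}(\RR^n)\bigr)^*$, pulls it back through the isomorphism $i^*|_{G_b^{k,\omega}(\RR^n)}:G_b^{k,\omega}(\RR^n)\to\bigl(C_0^{k,\omega}(\RR^n)\bigr)^*$ to a $\tilde\lambda\in G_b^{k,\omega}(\RR^n)$, and uses the weak$^*$ density of $C_c^\infty$ (established via the Jackson-type operators $L_{NN}$ and the cutoff multiplication in Proposition~\ref{prop4.2}) to conclude $\tilde\lambda=0$, a contradiction. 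You instead run a direct mollify-then-cutoff argument in the norm topology, with no reference to the predual identification. The trade-off is clear: the paper already needs the full machinery of Theorem~\ref{te1.4}(1), so the corollary drops out at essentially zero marginal cost, and the weak$^*$ density statement does double duty elsewhere; your proof is self-contained and elementary, and in fact makes the role of condition (ii) in the definition of $C_0^{k,\omega}$ more transparent than the paper's indirect route does.

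One small imprecision worth flagging in the truncation step: you attribute control of the top-order modulus-of-continuity seminorm of $(\chi_R-1)g$ solely to condition (i) (vanishing at infinity) and \eqref{omega2}, but the leading Leibniz term $(\chi_R(x)-1)\bigl(D^\alpha g(x)-D^\alpha g(y)\bigr)$ needs more. Two repairs are available. Either use that $g$ inherits condition (ii) from $f$ (by the mollification estimate you already proved) and run a case split on $\|x-y\|\lessgtr\rho$ exactly as in your mollification step, combining $\eta(\rho)\to 0$ with $\sup_{\|z\|>R-1}|D^\alpha g(z)|\to 0$; or observe that $g=\phi_\epsilon*f$ is $C^\infty$ and $D^{\alpha+e_i}g=(D_i\phi_\epsilon)*D^\alpha f$ vanishes at infinity, so $D^\alpha g$ is Lipschitz with local Lipschitz constant decaying at infinity, and then $\|x-y\|/\omega(\|x-y\|)\le 1/\omega(1)$ for $\|x-y\|\le 1$ (by monotonicity of $t/\omega(t)$) closes the estimate. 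Either way the argument goes through; you should make explicit which of these you are using, since ``condition (i)'' alone does not cover this term. The final paragraph about how Theorem~\ref{te1.4}(1) ``implicitly'' uses the same machinery is a fair guess at the paper's structure, but your proof does not actually need it and is cleaner without the digression.
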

It is not clear whether the condition of the second part of the theorem is valid for all spaces $C_b^{k,\omega}(S)$ with $\omega$ subject to \eqref{omega2}. Here we
describe a class of sets $S$ satisfying this condition.  As before, by $\mathcal P_{k,n}$ we denote the space of real polynomials on $\RR^n$ of degree $k$, and by $Q_r(x)\subset \RR^n$ the closed cube centered at $x$ of sidelength $2r$.
\begin{D}\label{wm}
A point $x$ of a subset $S\subset\RR^n$ is said to be weak $k$-Markov if 
\[
\varliminf_{r\rightarrow 0}\left\{\sup_{p\in\mathcal P_{k,n}\setminus 0}\left(\frac{\sup_{Q_r(x)}|p|}{\sup_{Q_r(x)\cap S}|p|}  \right)   \right\}<\infty .
\]

A closed set $S\subset\RR^n$ is said to be weak $k$-Markov if it contains a dense subset
of weak $k$-Markov points.
\end{D}
The class of weak $k$-Markov sets, denoted by ${\rm Mar}^*_k(\RR^n)$, was introduced and studied by Yu.~Brudnyi and the author, see
 \cite{BB1, B}. It contains, in particular, the closure of any open set, the Ahlfors $p$-regular compact subsets of $\RR^n$ with $p > n-1$, a wide class of fractals of arbitrary positive Hausdorff measure,  direct products $\prod_{j=1}^l S_j$, where $S_j\in {\rm Mar}^*_k(\RR^{n_j})$, $1\le j\le l$, $n=\sum_{j=1}^l n_j$, and closures of unions of any combination of such sets. Solutions of the Whitney problems (see sections 1.2 and 1.3 above) for sets in ${\rm Mar}^*_k(\RR^n)$ are relatively simple, see \cite{BB1}. 
 
We prove the following result.
 \begin{Th}\label{te1.11}
Let $S'\in {\rm Mar}^*_k(\RR^n)$ and $\omega$ satisfy \eqref{omega2}. Suppose $H:\RR^n\rightarrow\RR^n$ is a differentiable map such that 
\begin{itemize}
\item[(a)]
the entries of its Jacobian matrix belong to $C_b^{k-1,\omega_o}(\RR^n)$, where $\omega_o$ satisfies
\begin{equation}\label{equ1.8}
\lim_{t\rightarrow 0^+}\frac{\omega_o(t)}{\omega(t)}=0;
\end{equation}
\item[(b)]
the map $H|_{S'}:S'\rightarrow S=:H(S')$ is a proper retraction.\footnote{I.e.,  $S\subset S'$ and $H|_{S'}(x)=x$ for all $x\in S$, and for each compact $K\subset S$ its preimage $(H|_{S'})^{-1}(K)$ is compact.}
\end{itemize}

Then the condition of Theorem \ref{te1.4} holds for $C_b^{k,\omega}(S)$. Thus $G_b^{k,\omega}(S)$ is isomorphic to $\bigl(C^{k,\omega}_0(S)\bigr)^*$  and so $G_b^{k,\omega}(S)$ and $C^{k,\omega}_0(S)$ have the metric approximation property.
 \end{Th}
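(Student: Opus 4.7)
The plan is to construct the extension operator $T$ required by Theorem \ref{te1.4}(2) as a composition $T = E_{S'}\circ \bar A$. Here $E_{S'}: C_b^{k,\omega}(S') \to C_b^{k,\omega}(\RR^n)$ denotes the extension operator for the weak $k$-Markov set $S'$ supplied by \cite{BB1, B}---which is already known to be bounded, weak$^*$ continuous, and to carry $C_0^{k,\omega}(S')$ into $C_0^{k,\omega}(\RR^n)$---and $\bar A: C_b^{k,\omega}(S) \to C_b^{k,\omega}(S')$ is the pullback by $H|_{S'}$, defined by $(\bar A f)(x):=f(H(x))$ for $x\in S'$ (unambiguous because $H(S')\subset S$). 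Because $H$ fixes $S\subset S'$, the composition will automatically satisfy $(Tf)|_S = f$, so Theorem \ref{te1.4}(2) will yield $\bigl(C_0^{k,\omega}(S)\bigr)^*\cong G_b^{k,\omega}(S)$. The metric approximation property then follows on $G_b^{k,\omega}(S)$ from its separability (a countable dense set for $S$ produces a countable dense spanning set of Dirac functionals) combined with the approximation property supplied by Theorem \ref{te1.3}, via Grothendieck's classical result; a standard duality argument then transfers it to the predual $C_0^{k,\omega}(S)$.

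The main work is in establishing three properties of $\bar A$. For boundedness, I would verify via the Fa\`a di Bruno formula that, for any extension $\tilde f\in C_b^{k,\omega}(\RR^n)$ of $f$, the function $\tilde f\circ H$ belongs to $C_b^{k,\omega}(\RR^n)$: each term in the expansion of $D^k(\tilde f\circ H)$ contains either a $k$-th derivative of $\tilde f$ composed with $H$ (modulus of continuity $\omega$, since $H$ is Lipschitz) or a higher derivative of $H$ of modulus $\omega_o$ multiplied by lower derivatives of $\tilde f$ that are Lipschitz, and both are dominated by $\omega$ via condition (a). Restricting to $S'$ then shows $\bar A f$ lies in $C_b^{k,\omega}(S')$ with controlled norm. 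For weak$^*$ continuity, I would apply Theorem \ref{te1.6} to the map $x\mapsto \delta_{H(x)}^0 \in G_b^{k,\omega}(\RR^n)$: this map lies in $C_b^{k,\omega}(\RR^n;G_b^{k,\omega}(\RR^n))$, being the composition of the tautological embedding $y\mapsto\delta_y^0$ (which corresponds to the identity under Theorem \ref{te1.6}) with $H$. Theorem \ref{te1.6} produces a bounded operator $C_*:G_b^{k,\omega}(\RR^n)\to G_b^{k,\omega}(\RR^n)$ whose adjoint is composition by $H$; since $H(S')\subset S$, the restriction $C_*|_{G_b^{k,\omega}(S')}$ lands in $G_b^{k,\omega}(S)$ and serves as the preadjoint of $\bar A$.

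The delicate step, and the one that actually uses properness of $H|_{S'}$ together with the strict hypothesis $\omega_o=o(\omega)$, is showing $\bar A(C_0^{k,\omega}(S))\subset C_0^{k,\omega}(S')$. By Corollary \ref{cor1.10}, restrictions to $S$ of functions $\varphi\in C_c^\infty(\RR^n)$ are dense in $C_0^{k,\omega}(S)$. For such $\varphi$, properness of $H|_{S'}$ forces $(\varphi\circ H)|_{S'}$ to vanish outside a bounded subset of $S'$, so multiplying $\varphi\circ H\in C_b^{k,\omega}(\RR^n)$ by a smooth compactly supported cutoff equal to $1$ on a neighborhood of that bounded set produces a compactly supported element of $C_0^{k,\omega}(\RR^n)$ that extends $\bar A(\varphi|_S)$; hence $\bar A(\varphi|_S)\in C_0^{k,\omega}(S')$. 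Since $C_0^{k,\omega}(S')$ is closed in $C_b^{k,\omega}(S')$ (as the preimage of the closed subspace $C_0^{k,\omega}(\RR^n)$ under the continuous right inverse $E_{S'}$), continuity of $\bar A$ transfers this inclusion from the dense subspace to all of $C_0^{k,\omega}(S)$. The principal obstacle is the Fa\`a di Bruno accounting---one must ensure that every cross-term involving higher derivatives of $H$ (whose modulus is only $\omega_o$) is compatible with the target modulus $\omega$---and it is precisely the strict condition $\omega_o(t)/\omega(t)\to 0$ that makes both the basic composition estimate and the preservation of $C_0$-regularity work.
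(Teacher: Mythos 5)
Your construction is the same as the paper's: build the operator required by Theorem~\ref{te1.4}(2) as the composition $E_{S'}\circ\bar A$, where $\bar A=(H|_{S'})^*$ is the pullback and $E_{S'}$ is the (cutoff-modified) weak $k$-Markov extension operator for $S'$, and verify boundedness via Fa\`a di Bruno, weak$^*$ continuity via a preadjoint, and $C_0$-preservation by testing on restrictions of $C^\infty_c$ functions and using density. Your route to weak$^*$ continuity (construct $C_*$ on $G_b^{k,\omega}(\RR^n)$ from the $G_b^{k,\omega}(\RR^n)$-valued map $x\mapsto\delta_{H(x)}^0$ via Theorem~\ref{te1.6}, then restrict to $G_b^{k,\omega}(S')$) is a slightly different packaging from the paper's factorization $(H|_{S'})^*=q_{S'}\circ H^*\circ T$ combined with Theorem~\ref{teo1.6}; both work, though yours requires separately checking that $x\mapsto\delta_{H(x)}^0\in C_b^{k,\omega}(\RR^n;G_b^{k,\omega}(\RR^n))$, which is really the same Fa\`a di Bruno estimate again.

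The genuine gap is in the $C_0$-preservation step. You write that multiplying $\varphi\circ H$ by a smooth compactly supported cutoff ``produces a compactly supported element of $C_0^{k,\omega}(\RR^n)$,'' but compact support alone does not place a function in $C_0^{k,\omega}(\RR^n)$: by definition one also needs the little-H\"older condition $\lim_{\|x-y\|\to 0}\bigl(D^\alpha f(x)-D^\alpha f(y)\bigr)/\omega(\|x-y\|)=0$ for $|\alpha|=k$, and a compactly supported $C_b^{k,\omega}$ function whose top derivatives are exactly $\omega$-H\"older fails it. The fact that makes your specific function work is that the $k$-th derivatives of $\varphi\circ H$ have modulus of continuity bounded by $\omega_o$ plus Lipschitz contributions, hence $o(\omega)$ by~\eqref{equ1.8} and~\eqref{omega2}; you flag the role of $\omega_o/\omega\to 0$, but the verification is not carried out, and your sentence as written is false. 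The paper handles this cleanly by applying Lemma~\ref{lem6.2}/\ref{lem6.3} with $\tilde\omega=\omega_o$ to get $(H|_{S'})^*f\in C_b^{k,\omega_o}(S')$ with compact support, concluding $Ef\in C_b^{k,\omega_o}(\RR^n)$ with compact support, and only then using $\omega_o=o(\omega)$ to land in $C_0^{k,\omega}(\RR^n)$. A smaller point: the extension operator from \cite{BB1,B} does not by itself carry $C_0^{k,\omega}(S')$ into $C_0^{k,\omega}(\RR^n)$; the paper multiplies it by a cutoff supported in $[S']_3$ to obtain $\widetilde T$ (so that extensions of compactly supported traces are themselves compactly supported), and your $E_{S'}$ needs to be that modified operator.
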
 
 \begin{R}
 {\rm (1)
 In addition to weak $k$-Markov sets $S\subset\RR^n$, Theorem \ref{te1.11} is valid, e.g., for a compact subset $S$ of a $C^{k+1}$-manifold
 $M\subset\RR^n$ such that the base of the topology of $S$  consists of relatively open subsets of Hausdorff dimension $> {\rm dim}\,M - 1$. Indeed, in this case there exist tubular open neighbourhoods $U_M\subset V_M\subset\RR^n$ of $M$ such that ${\rm cl}(U_M)\subset V_M$ together with a $C^{k+1}$ retraction $r: U_M\rightarrow M$. Then, due to the hypothesis for $S$,  the base of topology of $S':=r^{-1}(S)\cap {\rm cl}(U_M) $ consists of relatively open subsets
of Hausdorff dimension $>n-1$ and so $S'\in {\rm Mar}_p^*(\RR^n)$ for all $p\in\N$, see, e.g., \cite[page\,536]{B}. Moreover, it is easily seen that $r|_{S'}$ is the restriction to $S'$ of a  map $H\in C_b^{k+1}(\RR^n; \RR^n)$. Decreasing $V_M$, if necessary, we may assume that $S'$ is compact, and so the triple $(H, S', S)$ satisfies the hypothesis of the theorem.
 
\noindent (2) Under conditions of Theorem \ref{te1.11},  $C_b^{k,\omega}(S)$ is isomorphic to the second dual of $C^{k,\omega}_0(S)$.}
 \end{R}
\section{Proof of Theorem \ref{te1.2}}
By $\delta_x^\alpha$, $x\in\RR^n$, $\alpha\in\mathbb Z^n_+$, we denote the evaluation functional $D^\alpha|_{\{x\}}$. By definition each $\delta_x^\alpha$, $|\alpha|\le k$, belongs to $\bigl(C^{k,\omega}_b(\RR^n)\bigr)^*$ and has norm $\le 1$. Similarly, functionals $\frac{\delta_x^\alpha-\delta_y^\alpha}{\omega(\|x-y\|)}$, $|\alpha|=k$, $x,y\in\RR^n$, $x\ne y$, belong to $\bigl(C^{k,\omega}_b(\RR^n)\bigr)^*$ and have norm $\le 1$.
\begin{Proposition}\label{p2.1}
The closed unit ball $B$ of $\bigl(C^{k,\omega}_b(\RR^n)\bigr)^*$ is the balanced weak$\,^*$ closed convex hull of the set $V$ of all functionals $\delta_x^\alpha$, $|\alpha|\le k$, and $\frac{\delta_x^\alpha-\delta_y^\alpha}{\omega(\|x-y\|)}$, $|\alpha|=k$, $x,y\in\RR^n$, $x\ne y$.
\end{Proposition}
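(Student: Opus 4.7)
The plan is to recognize the statement as a direct consequence of the bipolar theorem applied to the dual pair $\langle C_b^{k,\omega}(\RR^n),\bigl(C_b^{k,\omega}(\RR^n)\bigr)^*\rangle$. Let $K$ denote the balanced weak$^*$ closed convex hull of $V$. The containment $K\subseteq B$ is immediate from the remarks preceding the proposition: every functional in $V$ has operator norm at most $1$, and by the Banach--Alaoglu theorem $B$ is itself a weak$^*$ closed, balanced, convex set containing $V$.

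For the reverse inclusion I would compute the polar of $V$ taken in the predual $C_b^{k,\omega}(\RR^n)$,
\[
V^\circ:=\bigl\{f\in C_b^{k,\omega}(\RR^n)\, :\, |\psi(f)|\le 1\ \text{for all}\ \psi\in V\bigr\}.
\]
The key observation is that formulas \eqref{eq4} and \eqref{eq5} are nothing but suprema of $|\psi(f)|$ taken over the two types of elements of $V$: for the evaluation functionals $\delta_x^\alpha$ with $|\alpha|\le k$ one recovers $\|f\|_{C^k_b(\RR^n)}$, and for the divided differences $\frac{\delta_x^\alpha-\delta_y^\alpha}{\omega(\|x-y\|)}$ with $|\alpha|=k$ one recovers $|f|_{C^{k,\omega}_b(\RR^n)}$. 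Hence
\[
\sup_{\psi\in V}|\psi(f)|=\max\bigl(\|f\|_{C^k_b(\RR^n)},\,|f|_{C^{k,\omega}_b(\RR^n)}\bigr)=\|f\|_{C_b^{k,\omega}(\RR^n)},
\]
so that $V^\circ$ coincides with the closed unit ball of $C_b^{k,\omega}(\RR^n)$.

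Now the bipolar theorem, applied to the stated dual pair (with the weak$^*$ topology on the second factor), gives that $V^{\circ\circ}\subset \bigl(C_b^{k,\omega}(\RR^n)\bigr)^*$ is precisely the balanced weak$^*$ closed convex hull of $V$, i.e.\ $V^{\circ\circ}=K$. On the other hand, unwinding the definition,
\[
V^{\circ\circ}=\bigl\{\phi\in \bigl(C_b^{k,\omega}(\RR^n)\bigr)^*\, :\, |\phi(f)|\le 1\ \text{whenever}\ \|f\|_{C_b^{k,\omega}(\RR^n)}\le 1\bigr\}=B.
\]
Combining the two identifications yields $K=B$, as required. There is no real obstacle here; the only point that needs verifying is the norm identity $\sup_{\psi\in V}|\psi(f)|=\|f\|_{C_b^{k,\omega}(\RR^n)}$, which is a direct unpacking of \eqref{eq3}--\eqref{eq5}.
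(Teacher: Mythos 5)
Your proof is correct, and it is essentially the same argument as the paper's: the crucial observation in both cases is the norm identity $\sup_{\psi\in V}|\psi(f)|=\|f\|_{C_b^{k,\omega}(\RR^n)}$, and the bipolar theorem you invoke is just the packaged form of the Hahn--Banach separation argument the paper runs by hand.
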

\begin{proof}
Clearly, $V\subset B$ and therefore the required hull $\widehat V\subset B$ as well. Assume, on the contrary, that $\widehat V\ne B$. Then due to the Hahn-Banach theorem there exists an element $f\in C^{k,\omega}_b(\RR^n)$ of norm one such that $\sup_{v\in\widehat V}|v(f)|\le c<1$. Since $V\subset \widehat V$, this implies
\[
\|f\|_{C^{k,\omega}_b(\RR^n)}\le c<1,
\]
a contradiction proving the result.
\end{proof}
Let $X$ be the minimal closed subspace of $\bigl(C^{k,\omega}_b(\RR^n)\bigr)^*$ containing $V$.
\begin{Proposition}\label{p2.2}
$X^*$ is isometrically isomorphic to $C^{k,\omega}_b(\RR^n)$.
\end{Proposition}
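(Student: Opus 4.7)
The plan is to verify that the natural restriction map $R:C^{k,\omega}_b(\RR^n)\to X^*$ defined by $(Rf)(\xi):=\xi(f)$, $\xi\in X$, is a surjective linear isometry. The inequality $\|Rf\|_{X^*}\le\|f\|_{C^{k,\omega}_b(\RR^n)}$ is automatic from the isometric inclusion $X\subset\bigl(C^{k,\omega}_b(\RR^n)\bigr)^*$. For the reverse inequality one observes, directly from \eqref{eq3}--\eqref{eq5}, that $\|f\|_{C^{k,\omega}_b(\RR^n)}=\sup_{\xi\in V}|\xi(f)|$; since every $\xi\in V$ lies in $X$ and has dual norm at most $1$, this gives $\|f\|_{C^{k,\omega}_b(\RR^n)}\le\|Rf\|_{X^*}$. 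Thus $R$ is an isometric embedding, and only surjectivity is at issue.

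Given $\phi\in X^*$, I would introduce scalar functions $\phi_\alpha(x):=\phi(\delta_x^\alpha)$, $|\alpha|\le k$, and put $f:=\phi_0$; the goal is to show $f\in C^{k,\omega}_b(\RR^n)$, $D^\alpha f=\phi_\alpha$, and $Rf=\phi$. The identity $D^\alpha f=\phi_\alpha$ is proved by induction on $|\alpha|$, the key ingredient being the norm convergence
\[
\left\|\frac{\delta_{x+te_i}^\alpha-\delta_x^\alpha}{t}-\delta_x^{\alpha+e_i}\right\|_{\bigl(C^{k,\omega}_b(\RR^n)\bigr)^*}\longrightarrow 0\quad\text{as}\quad t\to 0
\]
for each $|\alpha|<k$ and each coordinate direction $e_i$. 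This is established by testing on an arbitrary $g$ in the unit ball of $C^{k,\omega}_b(\RR^n)$, writing
\[
\frac{D^\alpha g(x+te_i)-D^\alpha g(x)}{t}-D^{\alpha+e_i}g(x)=\frac{1}{t}\int_0^t\bigl(D^{\alpha+e_i}g(x+se_i)-D^{\alpha+e_i}g(x)\bigr)\,ds,
\]
and bounding the integrand uniformly in $g$: by $\|g\|_{C^{k,\omega}_b(\RR^n)}\omega(|s|)$ when $|\alpha+e_i|=k$, and by a Lipschitz bound of order $|s|$ when $|\alpha+e_i|<k$. Applying $\phi$ and exploiting its continuity on $X$ yields $\partial_i\phi_\alpha(x)=\phi_{\alpha+e_i}(x)$, closing the induction.

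With $D^\alpha f=\phi_\alpha$ for all $|\alpha|\le k$ in hand, the bounds $|D^\alpha f(x)|\le\|\phi\|_{X^*}$ and $|D^\alpha f(x)-D^\alpha f(y)|\le\|\phi\|_{X^*}\omega(\|x-y\|)$ (the latter for $|\alpha|=k$) follow at once from the fact that $\delta_x^\alpha$ and $(\delta_x^\alpha-\delta_y^\alpha)/\omega(\|x-y\|)$ belong to $V$, hence have dual norm $\le 1$. Therefore $f\in C^{k,\omega}_b(\RR^n)$ with $\|f\|\le\|\phi\|_{X^*}$; together with the isometric embedding this forces equality. Since $Rf$ and $\phi$ agree by construction on the spanning set $V$, they agree on all of $X$, giving $Rf=\phi$. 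The case $k=0$ requires no induction and is immediate from the boundedness and modulus estimates furnished by $V$. The main technical obstacle is precisely the uniform norm convergence of the difference quotients of $\delta_x^\alpha$ displayed above: pointwise convergence on $C^{k,\omega}_b(\RR^n)$ is insufficient, and one must invoke a Taylor-type remainder that is uniformly controlled on the unit ball of $C^{k,\omega}_b(\RR^n)$, as sketched.
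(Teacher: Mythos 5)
Your proposal is correct and follows essentially the same route as the paper's proof: given $\phi\in X^*$ one recovers $f$ from $\phi(\delta^0_x)$, proves by induction that $\phi(\delta^\alpha_x)=D^\alpha f(x)$ using the norm convergence $\frac{\delta^\alpha_{x+te_i}-\delta^\alpha_x}{t}\to\delta^{\alpha+e_i}_x$ in $\bigl(C^{k,\omega}_b(\RR^n)\bigr)^*$, reads off the norm bounds from membership of $\delta^\alpha_x$ and $(\delta^\alpha_x-\delta^\alpha_y)/\omega(\|x-y\|)$ in $V$, and concludes by checking agreement on the spanning set $V$. Your write-up is somewhat more explicit than the paper's at the one delicate step (the paper just invokes ``the mean-value theorem'' for the norm convergence, whereas you spell out the uniform integral estimate on the unit ball), but the decomposition, the induction, and the final isometry argument are the same.
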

\begin{proof}
For $h\in X^*$ we set $H(x):=h(\delta^0_x)$, $x\in\mathbb R^n$.
Let $e_1,\dots, e_n$ be the standard orthonormal basis in $\RR^n$. By the mean-value theorem for functions in $C^{k,\omega}_b(\RR^n)$ we obtain, for all $\alpha\in\mathbb Z^n_+$, $|\alpha|<k$,
$x\in\RR^n$,
\begin{equation}\label{eq2.6}
\lim_{t\rightarrow 0}\frac{\delta^{\alpha}_{x+t\cdot e_i}-\delta^\alpha_x}{t}=\delta^{\alpha+e_i}_x 
\end{equation}
(convergence in $\bigl(C^{k,\omega}_b(\RR^n)\bigr)^*)$. From here by induction we deduce easily that $H\in C^k(\RR^n)$ and for all $\alpha\in\mathbb Z^n_+$, $|\alpha|\le k$,
$x\in\RR^n$,
\[
h(\delta^\alpha_x)=D^\alpha H(x).
\]
This shows that $H\in C^{k,\omega}_b(\RR^n)$ and $\|H\|_{C^{k,\omega}_b(\RR^n)}\le \|h\|_{X^*}$. Considering $H$ as the bounded linear functional on $\bigl(C^{k,\omega}_b(\RR^n)\bigr)^*$ we obtain that
$H|_V=h|_V$. Thus, by the definition of $X$,
\[
H|_{X}=h.
\]
Since the unit ball of $X$ is $B\cap X$, 
\[
\|h\|_{X^*}\le \|H\|_{C^{k,\omega}_b(\RR^n)}\, \bigl(\le \|h\|_{X^*}\bigr).
\]
Hence, the correspondence $h\mapsto H$ determines an isometry $I :X^*\rightarrow C_b^{k,\omega}(\RR^n)$. Since the restriction of each  $H\in C_b^{k,\omega}(\RR^n)$, regarded as the bounded linear functional on $\bigl(C^{k,\omega}_b(\RR^n)\bigr)^*$, to $X$ determines some $h\in X^*$, map $I$ is surjective.

This completes the proof of the proposition.
\end{proof}
Note that equation \eqref{eq2.6} shows that the minimal closed subspace $G_b^{k,\omega}(\RR^n)\subset\bigl(C_b^{k,\omega}(\RR^n)\bigr)^*$ containing all $\delta_x^0$, $x\in\RR^n$, coincides with $X$. Thus, 
$\bigl(G_b^{k,\omega}(\RR^n)\bigr)^*$ is isometrically isomorphic to $C^{k,\omega}_b(\RR^n)$; this proves Theorem \ref{te1.2} for $S=\RR^n$.
\begin{C}\label{cor2.3}
The closed unit ball of $G^{k,\omega}_b(\RR^n)$ is the balanced closed convex hull of the set $V$ of all functionals $\delta_x^\alpha$, $|\alpha|\le k$, and $\frac{\delta_x^\alpha-\delta_y^\alpha}{\omega(\|x-y\|)}$, $|\alpha|=k$, $x,y\in\RR^n$, $x\ne y$.
\end{C}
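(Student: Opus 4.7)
The plan is to combine Proposition~\ref{p2.1} (which identifies the closed unit ball of $\bigl(C_b^{k,\omega}(\RR^n)\bigr)^*$ as the balanced weak$^*$-closed convex hull of $V$) with the duality $\bigl(G_b^{k,\omega}(\RR^n)\bigr)^*\cong C_b^{k,\omega}(\RR^n)$ furnished by Proposition~\ref{p2.2}, and apply Hahn--Banach separation inside the Banach space $G_b^{k,\omega}(\RR^n)$ itself.

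First I would verify that $V$ actually lies in $X:=G_b^{k,\omega}(\RR^n)$. By definition $\delta_x^0\in X$ for every $x$, and equation \eqref{eq2.6} (read as a norm limit in $\bigl(C_b^{k,\omega}(\RR^n)\bigr)^*$) yields by induction on $|\alpha|$ that $\delta_x^\alpha\in X$ for every $\alpha$ with $|\alpha|\le k$; the normalized differences $\frac{\delta_x^\alpha-\delta_y^\alpha}{\omega(\|x-y\|)}$ with $|\alpha|=k$ then belong to $X$ as well. Let $\widetilde V$ denote the balanced norm-closed convex hull of $V$ inside $X$. Since every $v\in V$ has norm at most one in $\bigl(C_b^{k,\omega}(\RR^n)\bigr)^*$, and the inclusion $X\hookrightarrow\bigl(C_b^{k,\omega}(\RR^n)\bigr)^*$ is isometric, $\widetilde V$ is contained in the closed unit ball $B_X$ of $X$.

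For the reverse inclusion, suppose $\widetilde V\subsetneq B_X$ and pick $v_0\in B_X\setminus\widetilde V$. Applying the geometric Hahn--Banach theorem in $X$, one obtains $h\in X^*$ and a constant $c\in(0,|h(v_0)|)$ with $\sup_{w\in\widetilde V}|h(w)|\le c$. By Proposition~\ref{p2.2}, $h$ corresponds to a function $H\in C_b^{k,\omega}(\RR^n)$ with $\|H\|_{C_b^{k,\omega}(\RR^n)}=\|h\|_{X^*}$ and $h(\delta_x^\alpha)=D^\alpha H(x)$. Evaluating $h$ on $V$ gives
\[
\sup_{w\in V}|h(w)|=\max\!\left(\sup_{|\alpha|\le k,\,x}|D^\alpha H(x)|,\ \sup_{|\alpha|=k,\,x\ne y}\frac{|D^\alpha H(x)-D^\alpha H(y)|}{\omega(\|x-y\|)}\right)=\|H\|_{C_b^{k,\omega}(\RR^n)}.
\]
Combining these,
\[
\|H\|_{C_b^{k,\omega}(\RR^n)}=\sup_{w\in V}|h(w)|\le c<|h(v_0)|\le \|v_0\|_X\cdot\|h\|_{X^*}\le \|H\|_{C_b^{k,\omega}(\RR^n)},
\]
a contradiction establishing $\widetilde V=B_X$.

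I expect no real obstacle beyond checking the inclusion $V\subset X$, which is the only place norm (rather than weak$^*$) convergence of the difference quotients in \eqref{eq2.6} matters; once that is in hand, the argument is a clean Hahn--Banach separation parallel to the one used in Proposition~\ref{p2.1}, but carried out intrinsically in $X$ using the identification $X^*\cong C_b^{k,\omega}(\RR^n)$.
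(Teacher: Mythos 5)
Your proof is correct, and the argument is sound throughout: the inclusion $V\subset X:=G_b^{k,\omega}(\RR^n)$ does indeed follow from \eqref{eq2.6} by induction, the identification $\sup_{w\in V}|h(w)|=\|H\|_{C_b^{k,\omega}(\RR^n)}$ is exactly the definition of the norm \eqref{eq3}--\eqref{eq5}, and the chain of inequalities yields the desired contradiction. However, you take a mildly different route from the paper. The paper's proof is a two-line deduction from Proposition~\ref{p2.1}: since the weak$^*$ topology of $\bigl(C_b^{k,\omega}(\RR^n)\bigr)^*$ induces on $X$ precisely its weak topology (by the duality $X^*\cong C_b^{k,\omega}(\RR^n)$ from Proposition~\ref{p2.2}), and since the weak closure of a convex set in a Banach space equals its norm closure (Mazur), the intersection of the unit ball $B$ with $X$ is the norm-closed balanced convex hull of $V$. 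You instead bypass Proposition~\ref{p2.1} entirely and redo the Hahn--Banach separation intrinsically in $X$, using Proposition~\ref{p2.2} to identify the separating functional $h$ with a function $H\in C_b^{k,\omega}(\RR^n)$ and then reading off $\|H\|$ as the supremum of $|h|$ over $V$. Both are short and valid; the paper's version is more economical because it reuses Proposition~\ref{p2.1} as a black box, while yours is more self-contained in that it avoids the general topological facts (agreement of weak and weak$^*$ topologies on $X$, Mazur's theorem) and exposes the separation argument explicitly. The two proofs are parallel rather than one being a special case of the other.
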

\begin{proof}
The closed unit ball of $G^{k,\omega}_b(\RR^n)$ is
$B\cap G^{k,\omega}_b(\RR^n)$. Since the weak$^*$ topology of $\bigl(C^{k,\omega}_b(\RR^n)\bigr)^*$ induces the weak topology of $G^{k,\omega}_b(\RR^n)$ and the weak closure of the balanced convex hull of $V$ coincides with the norm closure of this set, the result follows from Proposition \ref{p2.1}.
\end{proof}
Now, let us consider the case of general $S\subset\mathbb R^n$. Let $h\in \bigl(G_b^{k,\omega}(S)\bigr)^*$. We set $H(x):=h(\delta_x^0)$, $x\in S$. Due to the Hahn-Banach theorem, there exists $\tilde h\in \bigl(G_b^{k,\omega}(\RR^n)\bigr)^*$ such that $\tilde h|_{G_b^{k,\omega}(S)}=h$ and $\|\tilde h\|_{(G_b^{k,\omega}(\RR^n))^*}=\|h\|_{(G_b^{k,\omega}(S))^*}$. Let us define $\widetilde H(x)=\tilde h(\delta_x^0)$, $x\in \RR^n$. According to Proposition \ref{p2.2}, 
$\widetilde H\in C^{k,\omega}_b(\RR^n)$ and $\|\widetilde H\|_{C_b^{k,\omega}(\RR^n)}=\|\tilde h\|_{(G_b^{k,\omega}(\RR^n))^*}$. Moreover, $\widetilde H|_S=H$. This implies that $H\in C_b^{k,\omega}(S)$ and has norm $\le \|h\|_{(G_b^{k,\omega}(S))^*}$. Hence, the correspondence $h\mapsto H$ determines a bounded nonincreasing norm linear injection $I_S:\bigl(G_b^{k,\omega}(S)\bigr)^*\rightarrow C_b^{k,\omega}(S)$.
Let us show that $I_S$ is a surjective isometry. Indeed, for $H\in C_b^{k,\omega}(S)$ there exists $\widetilde H\in C_b^{k,\omega}(\RR^n)$ such that $\widetilde H|_S=H$ and $\|\widetilde H\|_{C_b^{k,\omega}(\RR^n)}=\|H\|_{C_b^{k,\omega}(S)}$.  In turn, due to Proposition \ref{p2.2} there exists $\tilde h\in \bigl(G_b^{k,\omega}(\RR^n)\bigr)^*$ such that $\widetilde H(x)=\tilde h(\delta_x^0)$, $x\in\RR^n$, and $\|\widetilde H\|_{C_b^{k,\omega}(\RR^n)}=\|\tilde h\|_{(G_b^{k,\omega}(\RR^n))^*}$. We set $h:=\tilde h|_{G_b^{k,\omega}(S)}$. Then $h\in \bigl(G_b^{k,\omega}(S)\bigr)^*$ and $H(x)=h(\delta_x^0)$, $x\in S$, i.e.,
$I_S(h)=H$ and 
\[
\bigl(\|h\|_{(G_b^{k,\omega}(S))^*}\ge\bigr)\, \|I_S(h)\|_{C_b^{k,\omega}(S)}\ge \|h\|_{(G_b^{k,\omega}(S))^*}.
\]
 
The proof of Theorem \ref{te1.2} is complete.

\section{Proofs of Theorems \ref{teo1.5}, \ref{te1.6}, \ref{teo1.6}}
\subsection{Proof of Theorem \ref{teo1.5}}
\begin{proof}
According to the Finiteness Principle 
there exist constants $d\in\N$ and  $c\in (1,\infty)$ such that for all $f\in C_b^{k,\omega}(S)$,
\begin{equation}\label{e3.13}
\sup_{S'\subset S\,;\, {\rm card}\,S'\le d}\|f\|_{C_b^{k,\omega}(S')}\le\|f\|_{C_b^{k,\omega}(S)}\le c\cdot\left(\sup_{S'\subset S\,;\, {\rm card}\,S'\le d}\|f\|_{C_b^{k,\omega}(S')}\right).
\end{equation}
Here for $k=0$, $d=2$ (-\,optimal) and $c=1$, see \cite{McS}, for $n=1$, $d=k+2$ (-\,optimal) and $c$ depends on $k$ only, see \cite{M}, for $k=1$, $d=3\cdot 2^{n-1}$ (-\,optimal) and $c$ depends on $k$ and $n$ only, see \cite{BS1}, and for $k\ge 2$, $d=2^{ k+n \choose k}$ and $c=\frac{\tilde c}{\omega(1)}$, where $\tilde c$ depends on $k$ and $n$ only, see \cite{F1} and \cite{BM}, \cite{S}.

Considering $f$ as the bounded linear functional on $G_b^{k,\omega}(S)$, we get from \eqref{e3.13} the required implications
\[
B_b^{k,\omega}(S;d)\subset B_b^{k,\omega}(S)\subset c\cdot B_b^{k,\omega}(S;d).
\]
Indeed, suppose, on the contrary, that there exists $v\in  B_b^{k,\omega}(S)\setminus  c\cdot B_b^{k,\omega}(S;d)$. Let $f\in C_b^{k,\omega}(S)$ be such that
\[
\sup_{c\cdot B_b^{k,\omega}(S;d)}|f|<|f(v)|.
\] 
By the definition of $B_b^{k,\omega}(S;d)$ the left-hand side of the previous inequality coincides with $c\cdot\bigl(\sup_{S'\subset S\,;\, {\rm card}\,S'\le d}\,\|f\|_{C_b^{k,\omega}(S')}\bigr)$. Hence,
\[
c\cdot\left(\sup_{S'\subset S\,;\, {\rm card}\,S'\le d}\|f\|_{C_b^{k,\omega}(S')}\right)<|f(v)|\le \|f\|_{C_b^{k,\omega}(S)},
\]
a contradiction with \eqref{e3.13}.
\end{proof}
\subsection{Proof of Theorem \ref{te1.6}}
\begin{proof}
We set
\begin{equation}\label{e4.14}
r_{X}(F)(s):=F(\delta_s^0),\quad F\in \mathcal L(G_b^{k,\omega}(\RR^n); X),\quad s\in \RR^n.
\end{equation}
Applying the arguments similar to those of Proposition \ref{p2.2} we obtain
\[
r_{X}(F)\in C_b^{k,\omega}(\RR^n;X)\quad {\rm and}\quad  \|r_X(F)\|_{C_b^{k,\omega}(\RR^n;X)}\le \|F\|_{\mathcal L(G_b^{k,\omega}(\RR^n);X)}.
\]
On the other hand, for each $\varphi\in X^*$, $\|\varphi\|_{X^*}=1$, function $r_{\RR}(\varphi\circ F)\in C_b^{k,\omega}(\RR^n)$. So, since $r_{\RR}(\varphi\circ F)=\varphi (r_{X}(F))$, 
\[
\|\varphi\circ F\|_{(G_b^{k,\omega}(\RR^n))^*}=\|r_{\RR}(\varphi\circ F)\|_{C_b^{k,\omega}(\RR^n)}=\|\varphi (r_{\RR^n;X}(F))\|_{C_b^{k,\omega}(\RR^n)}\le \|r_{X}(F)\|_{C_b^{k,\omega}(\RR^n;X)}.
\]
Taking supremum over all such $\varphi$ we get
\[
\| F\|_{\mathcal L(G_b^{k,\omega}(\RR^n);X)}\le \|r_{X}(F)\|_{C_b^{k,\omega}(\RR^n;X)}\, \bigl(\le \| F\|_{\mathcal L(G_b^{k,\omega}(\RR^n);X)}\bigr).
\]
This shows that $r_{X}:\mathcal L(G_b^{k,\omega}(\RR^n);X)\rightarrow C_b^{k,\omega}(\RR^n;X)$ is an isometry. Let us prove that it is surjective.

Since every finite subset of $\RR^n$ is interpolating for $C_b^{k,\omega}(\RR^n)$,
 the set of vectors $\delta_s^0\in G_b^{k,\omega}(\RR^n)$, $s\in\RR^n$, is linearly independent. Hence, each $f\in C_b^{k,\omega}(\RR^n;X)$ determines a linear map $\hat f:{\rm span}\{\delta_s^0\, :\, s\in \RR^n\}\rightarrow X$,
 \[
\hat f\left(\sum_{j}c_j\delta_{s_j}^0\right):=\sum_j c_j f(s_j),\quad \sum_{j}c_j\delta_{s_j}^0\in {\rm span}\{\delta_s^0\, :\, s\in \RR^n\}\, (\subset G_b^{k,\omega}(\RR^n)).
\]
Next, for each $\varphi\in X^*$, $\|\varphi\|_{X^*}=1$,  function $\varphi\circ f\in C_b^{k,\omega}(\RR^n)$ and 
\[
\|\varphi\circ f\|_{C_b^{k,\omega}(\RR^n)}\le\|f\|_{C_b^{k,\omega}(\RR^n;X)}.
\]
Since $r_{\RR}:\bigl(G_b^{k,\omega}(\RR^n)\bigr)^*\rightarrow C_b^{k,\omega}(\RR^n)$ is an isometric isomorphism, there exists $\ell_{\varphi\circ f}\in \bigl(G_b^{k,\omega}(\RR^n)\bigr)^*$ such that $r_{\RR}(\ell_{\varphi\circ f})=\varphi\circ f$. Clearly, $\ell_{\varphi\circ f}$ coincides with $\varphi\circ\hat f$ on ${\rm span}\{\delta_s^0\, :\, s\in \RR^n\}$ and  for all $v\in G_b^{k,\omega}(\RR^n)$,

\[
\begin{array}{r}
\displaystyle
|\ell_{\varphi\circ f}(v)|\le \| \ell_{\varphi\circ f}\|_{(G_b^{k,\omega}(\RR^n))^*}\cdot \|v\|_{G_b^{k,\omega}(\RR^n)}
 =\|\varphi\circ f\|_{C_b^{k,\omega}(\RR^n)}\cdot \|v\|_{G_b^{k,\omega}(\RR^n)}\medskip\\

 \displaystyle \le \|f\|_{C_b^{k,\omega}(\RR^n;X)}\cdot \|v\|_{G_b^{k,\omega}(\RR^n)}.
\end{array}
\]
These imply that $\hat f:{\rm span}\{\delta_s^0\, :\, s\in \RR^n\}\rightarrow X$ is a linear continuous operator  of norm $\le \|f\|_{C_b^{k,\omega}(\RR^n;X)}$. Hence, it extends
to a bounded linear operator $F:{\rm cl}({\rm span}\{\delta_s^0\, :\, s\in \RR^n\})=:G_b^{k,\omega}(\RR^n)\rightarrow X$ such that $r_{X}(F)=f$. Thus,
 $r_{X}(F):\mathcal L(G_b^{k,\omega}(\RR^n);X)\rightarrow C_b^{k,\omega}(\RR^n;X)$ is an isometric isomorphism.
 
The proof of the theorem is complete.
\end{proof}
\subsection{Proof of Theorem \ref{teo1.6}}

\begin{proof}
Without loss of generality we may assume that $T$ has depth $d$ and is defined by \eqref{equ1.6}.
Let $T:\bigl(C_b^{k,\omega}(\RR^n)\bigr)^*\rightarrow \bigl(C_b^{k,\omega}(S)\bigr)^*$ be the adjoint of $T$ and $q_S^*:\bigl(C_b^{k,\omega}(S)\bigr)^*\rightarrow \bigl(C_b^{k,\omega}(\RR^n)\bigr)^*$ the adjoint of the quotient map $q_S: C_b^{k,\omega}(\RR^n)\rightarrow C_b^{k,\omega}(S)$.
Clearly, $q_S^*$ is an isometric embedding which maps the closed subspace of $\bigl(C_b^{k,\omega}(S)\bigr)^*$ generated by $\delta$-functionals of points in $S$ isometrically onto $G_b^{k,\omega}(S)\subset \bigl(C_b^{k,\omega}(\RR^n)\bigr)^*$. We define
\begin{equation}\label{proj}
P:=q_S^*\circ T^*.
\end{equation}
By the definition of $T_S$, for each $\delta_x^0\in G_b^{k,\omega}(\RR^n)$, $x\in\RR^n\setminus S$, and $f\in C_b^{k,\omega}(S)$ we have, for some $y_i^x\in S$,
\[
(P\delta_x^0)(f)=\delta_x^0(Tq_S f)=\sum_{i=1}^d \lambda_i^x\cdot f(y_i^x)=\left(\sum_{i=1}^d \lambda_i^x\cdot\delta_{y_i^x}^0\right)(f).
\]
Hence,
\begin{equation}\label{equ3.11}
P\delta_x^0=\sum_{i=1}^d \lambda_i^x\cdot\delta_{y_i^x}^0\quad {\rm for\ all}\quad x\in\RR^n\setminus S.
\end{equation}
Since $T\in Ext(C_b^{k,\omega}(S);C_b^{k,\omega}(\RR^n))$,
\begin{equation}\label{equ3.12}
P\delta_x^0=\delta_x^0\quad {\rm for\ all}\quad x\in S.
\end{equation}
Thus $P$ maps $G_b^{k,\omega}(\RR^n)$ into $G_b^{k,\omega}(S)$ and is identity on $G_b^{k,\omega}(S)$. Hence, $P$ is a bounded linear projection of norm $\|P\|\le \|T q_S\|\le \|T\|$.

Next, under the identification $\bigl(G_b^{k,\omega}(S)\bigr)^*=C_b^{k,\omega}(S)$ for all closed $S\subset\RR^n$ (see Theorem \ref{te1.2}), for all $x\in\RR^n\setminus S$, and $f\in C_b^{k,\omega}(S)$ we have by \eqref{equ3.11}
\[
(P^*f)(\delta_x^0)=f(P\delta_x^0)=f\left(\sum_{i=1}^d \lambda_i^x\cdot\delta_{y_i^x}^0\right)=\sum_{i=1}^d \lambda_i^x\cdot f(y_i^x)=(Tf)(\delta_x^0).
\]
The same identity is valid for $x\in S$, cf. \eqref{equ3.12}.

This implies that $P^*=T$ and completes the proof of the theorem.
\end{proof}

\section{Proofs of Theorems \ref{te1.3} and \ref{teor1.10}}
Sections 4.1 and 4.2 contain auxiliary results used in the proof of Theorem \ref{te1.3}.
\subsection{Jackson Theorem}
Recall that the {\em Jackson kernel} $J_N$ is the trigonometric polynomial of degree $2\widetilde N$, where $\widetilde N:= \left\lfloor\frac N2\right\rfloor$, given by the formula
\[
J_N(t)=\gamma_N\left(\frac{\sin\frac{\widetilde N t}{2}}{\sin \frac t2}\right)^4,
\]
where $\gamma_N$ is chosen so that $\displaystyle \int_{-\pi}^\pi J_N(t)\, dt =1$.

For a $2\pi$-periodic real function $f\in C(\RR)$
we set 
\begin{equation}\label{jack1}
(L_N f)(x):=\int_{-\pi}^\pi f(x-t)J_N(t)\, dt,\quad x\in\mathbb R.
\end{equation}
Then the classical {\em Jackson theorem} asserts (see, e.g., \cite[Ch.\,V]{T}): $L_N f$ is a real trigonometric polynomial of degree at most $N$ such that 
\begin{equation}\label{jack2}
\sup_{x\in (-\pi,\pi)}|f(x)-(L_N f)(x)|\le c\,\omega\bigl(f, \mbox{$\frac 1N$}\bigr),
\end{equation}
for a numerical constant $c>0$;
here $\omega(f,\cdot)$ is the modulus of continuity of $f$.\subsection{Convergence in the Weak$^*$ Topology of ${\mathbf C_b^{k,\omega}(\RR^n)}$} In the proof of Theorem \ref{te1.3} we use the following result.  As before, we equip $C_b^{k,\omega}(\RR^n)$ with the weak$^*$ topology induced by means of functionals in $G_b^{k,\omega}(\RR^n)\subset \bigl(C_b^{k,\omega}(\RR^n)\bigr)^*$.
\begin{Proposition}\label{prop3.1}
A sequence $\{f_i\}_{i\in\N}\subset C_b^{k,\omega}(\RR^n)$ weak$\,^*$ converges to $f\in C_b^{k,\omega}(\RR^n)$ if and only if
\begin{itemize}
\item[(a)]
\[
\sup_{i\in\N}\|f_i\|_{C_b^{k,\omega}(\RR^n)}<\infty;
\]
\item[(b)]
For all $\alpha\in\mathbb Z_+^n$, $0\le |\alpha|\le k$, $x\in\RR^n$
\[
\lim_{i\rightarrow\infty}D^\alpha f_i(x)=D^\alpha f(x).
\]
\end{itemize}
\end{Proposition}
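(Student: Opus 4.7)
The strategy is to exploit Corollary~\ref{cor2.3}, which identifies a total set $V \subset G_b^{k,\omega}(\RR^n)$ whose balanced closed convex hull is the unit ball of $G_b^{k,\omega}(\RR^n)$, and then to invoke the standard principle that for a norm-bounded sequence in a dual Banach space, weak$^*$ convergence is determined by the action on any total subset of the predual.

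For the forward direction, suppose $f_i \to f$ weak$^*$. Since each $v \in G_b^{k,\omega}(\RR^n)$ yields a convergent, hence bounded, sequence $v(f_i)$, the uniform boundedness principle applied on the Banach space $G_b^{k,\omega}(\RR^n)$, together with the isometry of Theorem~\ref{te1.2}, gives $\sup_i\|f_i\|_{C_b^{k,\omega}(\RR^n)}<\infty$, which is (a). For (b), equation~\eqref{eq2.6} and induction on $|\alpha|$ (carried out in the proof of Proposition~\ref{p2.2}) show that $\delta_x^\alpha \in G_b^{k,\omega}(\RR^n)$ for every $|\alpha|\le k$ and every $x\in\RR^n$, so testing against these functionals gives $D^\alpha f_i(x)=\delta_x^\alpha(f_i)\to \delta_x^\alpha(f)=D^\alpha f(x)$.

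For the backward direction, assume (a) and (b). By Corollary~\ref{cor2.3}, the set $V$ consisting of all $\delta_x^\alpha$ ($|\alpha|\le k$) and all $\frac{\delta_x^\alpha-\delta_y^\alpha}{\omega(\|x-y\|)}$ ($|\alpha|=k$, $x\ne y$) has balanced closed convex hull equal to the unit ball of $G_b^{k,\omega}(\RR^n)$; in particular the linear span of $V$ is dense. Assumption (b) gives $v(f_i)\to v(f)$ for every $v\in V$: this is immediate for $v=\delta_x^\alpha$, and for the scaled differences it follows by linearity from the convergence of $D^\alpha f_i$ at the two fixed points $x$ and $y$ (here $|\alpha|=k$, so (b) applies). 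By linearity, convergence extends to $\mathrm{span}(V)$, and then to all of $G_b^{k,\omega}(\RR^n)$ by a standard three-$\varepsilon$ estimate: given $v\in G_b^{k,\omega}(\RR^n)$ and $v_m\in \mathrm{span}(V)$ with $v_m\to v$ in norm,
\[
|v(f_i-f)|\le \|v-v_m\|_{G_b^{k,\omega}(\RR^n)}\bigl(\,\sup_j\|f_j\|_{C_b^{k,\omega}(\RR^n)}+\|f\|_{C_b^{k,\omega}(\RR^n)}\bigr)+|v_m(f_i-f)|,
\]
where we first pick $m$ large using (a) and then $i$ large using convergence on $\mathrm{span}(V)$.

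The argument is essentially routine once Corollary~\ref{cor2.3} is available; the only mildly nontrivial point is that the ``divided-difference'' functionals in $V$ are handled automatically by hypothesis (b), because they are already linear combinations of two functionals of the form $\delta_z^\alpha$ with $|\alpha|=k$ at which (b) directly applies. No additional regularity hypothesis on the modulus $\omega$ is needed.
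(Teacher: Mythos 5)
Your proof is correct and follows essentially the same route as the paper: both directions use Banach--Steinhaus and membership of $\delta_x^\alpha$ in $G_b^{k,\omega}(\RR^n)$ for the forward implication, and Corollary~\ref{cor2.3} together with a density/three-$\varepsilon$ argument (controlled by the uniform bound (a)) for the converse. The paper carries out the approximation by an explicit decomposition $g=g'+g''$ with $g'\in\mathrm{span}(V)$ and $\|g''\|$ small, which is just an unpacked version of your appeal to density of $\mathrm{span}(V)$.
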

\begin{proof}
Without loss of generality we may assume that $f=0$.
If $\{f_i\}_{i\in\N}$ weak$^*$ converges to $0$, then (a) follows from the Banach-Steinhaus theorem and (b) from the fact that each $\delta_x^\alpha\in G_b^{k,\omega}(\RR^n)$.

Conversely, suppose that $\{f_i\}_{i\in\N}\subset C_b^{k,\omega}(\RR^n)$ satisfies (a) and (b) with $f=0$. Let $g\in G_b^{k,\omega}(\RR^n)$. According to Corollary \ref{cor2.3}, given $\varepsilon>0$ there exist $J\in\N$ and families $c_{j\alpha}\in \RR$, $x_{j\alpha}\in\RR^n$,  $1\le j\le J$, $\alpha\in \Z_+^n$, $0\le |\alpha|<k$, and
$d_{j\alpha}\in \RR$, $x_{j\alpha}, y_{j\alpha}\in\RR^n$,  $x_{j\alpha}\ne y_{j\alpha}$, $1\le j\le J$, $\alpha\in \Z_+^n$, $|\alpha|=k$, such that
\[
g=\sum_{j,\alpha}c_{j\alpha}\delta_{x_{j\alpha}}^\alpha+\sum_{j,\alpha}d_{j\alpha}\frac{\delta_{x_{j\alpha}}^\alpha-\delta_{y_{j\alpha}}^\alpha}{\omega(\|x_{j\alpha}-y_{j\alpha} \|)}+g',
\]
where
\[
\sum_{j,\alpha}|c_{j\alpha}|+\sum_{j,\alpha}|d_{j\alpha}|\le \|g\|_{G_b^{k,\omega}(\RR^n)}\quad {\rm and}\quad \|g'\|_{G_b^{k,\omega}(\RR^n)}<\frac{\varepsilon}{2M},\quad M:=\sup_{i\in\N}\|f_i\|_{C_b^{k,\omega}(\RR^n)}.
\]
Further, due to condition (b), there exists $I\in\N$ such that for all $i\ge I$,
\[
\left|f_i\left(\sum_{j,\alpha}c_{j\alpha}\delta_{x_{j\alpha}}^\alpha+\sum_{j,\alpha}d_{j\alpha}\frac{\delta_{x_{j\alpha}}^\alpha-\delta_{y_{j\alpha}}^\alpha}{\omega(\|x_{j\alpha}-y_{j\alpha} \|)}   \right)  \right|<\frac{\varepsilon}{2}.
\]
Also, for such $i$,
\[
|f_i(g')|\le \|f_i\|_{C_b^{k,\omega}(\RR^n)}\cdot \|g'\|_{G_b^{k,\omega}(\RR^n)}< M\cdot \frac{\varepsilon}{2M}=\frac{\varepsilon}{2}.
\]
Combining these inequalities we obtain for all such $i$:
\[
|f_i(g)|<\varepsilon.
\]
This shows that $\lim_{i\rightarrow\infty}f_i(g)=0$. Thus $\{f_i\}_{i\in\N}$ weak$^*$ converges to $0$, as required.
\end{proof}

\subsection{Proof of Theorem \ref{te1.3}\,(1)}
We set
\[
\mathbb K_N^n:=\bigl\{x=(x_1,\dots, x_n)\in\RR^n\, :\, \max_{1\le i\le n}|x_i|\le N\bigr\}.
\]
Let $\rho:\mathbb R^n\rightarrow [0,1]$ be a fixed $C^\infty$ function with support in the cube $\mathbb K_2^n$, equals one on the unit cube $\mathbb K_1^n$. For a natural number $\ell$ we set $\rho_\ell(x):=\rho(x/\ell)$, $x\in\mathbb R^n$. Then there exist constants $c_{k,n}$ (depending on $k$ and $n$) such that
\begin{equation}\label{rhol}
\sup_{x\in\mathbb R^n}|D^\alpha\rho_\ell(x)|\le \frac{c_{k,n}}{\ell^{|\alpha|}}\quad {\rm for\ all}\quad \alpha\in\mathbb Z_+^n\quad {\rm with }\quad   |\alpha|\le k+1.
\end{equation}

Let $f\in C_b^{k,\omega}(\mathbb R^n)$. We define a 
$8\ell\sqrt n\,$-periodic in each variable function $f_\ell$ on $\mathbb R^n$ by 
\begin{equation}\label{eq3.10}
f_\ell(v+x)=\rho_\ell(x)\cdot f(x),\qquad v+x\in 8\ell\sqrt n\cdot\mathbb Z^n+\mathbb K_{4\ell\sqrt n}^n.
\end{equation}
Note that $f_\ell$ coincides with $f$ on the cube $\mathbb K_\ell^n$.
\begin{Lm}\label{norm}
There exists a constant $C_\ell=C(\ell,k,n,\omega)$ (i.e., depending on $\ell, k,n$ and $\omega$) such that 
\[ 
\lim_{\ell\rightarrow\infty}C_{\ell}=1+c_{k,n}\cdot 4\sqrt n\cdot (k+1)\cdot\lim_{t\rightarrow\infty}\,\frac{1}{\omega(t)},
\] 
and
\[
\|f_\ell\|_{C_b^{k,\omega}(\mathbb R^n)}\le C_\ell\cdot\|f\|_{C_b^{k,\omega}(\mathbb R^n)}.
\]
\end{Lm}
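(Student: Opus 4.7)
The plan is to estimate the $C_b^{k,\omega}(\RR^n)$-norm of $f_\ell$ by expanding its derivatives via the Leibniz rule on the fundamental period cube, and then to convert Lipschitz-type bounds (from the mean value theorem applied to $\rho_\ell$ and to lower-order derivatives of $f$) into $\omega$-type bounds via the monotonicity of $t/\omega(t)$. Since $\rho_\ell$ is supported in $\mathbb K_{2\ell}^n$, which lies strictly inside the fundamental cube $\mathbb K_{4\ell\sqrt n}^n$ (as $4\sqrt n\ge 2$), $f_\ell$ is a well-defined $C^\infty$ periodic function on $\RR^n$, and every bound reduces to studying $\rho_\ell\cdot f$.

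For the $C_b^k$-part the argument is immediate: Leibniz gives
\[
D^\alpha f_\ell = D^\alpha(\rho_\ell f) = \sum_{\beta\le\alpha}\binom{\alpha}{\beta} D^\beta\rho_\ell\cdot D^{\alpha-\beta}f,\qquad |\alpha|\le k,
\]
and \eqref{rhol} implies $\|f_\ell\|_{C_b^k(\RR^n)}\le \|f\|_{C_b^k(\RR^n)}(1 + O(1/\ell))$, since the $\beta=0$ summand is bounded by $\|f\|_{C_b^k(\RR^n)}$ (using $|\rho_\ell|\le 1$) while every other summand carries a factor $c_{k,n}/\ell^{|\beta|}$ with $|\beta|\ge 1$.

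The heart of the argument is the modulus-of-continuity seminorm with $|\alpha|=k$. I would first reduce by periodicity to pairs $(x,y)$ with $\|x-y\|$ bounded by a quantity $T_\ell$ of order $\ell$: if $\|x-y\|$ exceeds $T_\ell$, one replaces $y$ by $y+v$ with $v\in 8\ell\sqrt n\,\Z^n$ suitably chosen, noting that the numerator $|D^\alpha f_\ell(x) - D^\alpha f_\ell(y)|$ is unaltered by periodicity while $\omega(\|x-y\|)\ge \omega(\|x-y-v\|)$ by monotonicity of $\omega$. Within this range, apply Leibniz and split each summand by the standard telescoping
\[
D^\beta\rho_\ell(x)D^{\alpha-\beta}f(x) - D^\beta\rho_\ell(y)D^{\alpha-\beta}f(y) = D^\beta\rho_\ell(x)\bigl[D^{\alpha-\beta}f(x) - D^{\alpha-\beta}f(y)\bigr] + \bigl[D^\beta\rho_\ell(x)-D^\beta\rho_\ell(y)\bigr]D^{\alpha-\beta}f(y).
\]
The $\beta=0$ first subterm contributes exactly $|f|_{C_b^{k,\omega}(\RR^n)}\omega(\|x-y\|)$. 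Every other subterm admits a Lipschitz bound of the form $(C/\ell^m)\|x-y\|\cdot\|f\|_{C_b^k(\RR^n)}$ with $m\ge 1$, coming either from the mean value theorem applied to $D^{\alpha-\beta}f$ when $|\beta|\ge 1$ (so $|\alpha-\beta|<k$), or from MVT applied to $D^\beta\rho_\ell$ via \eqref{rhol}. The key conversion
\[
\frac{\|x-y\|}{\omega(\|x-y\|)}\le \frac{T_\ell}{\omega(T_\ell)},
\]
valid since $t/\omega(t)$ is nondecreasing, turns each such Lipschitz bound into $(C'/(\ell^{m-1}\omega(T_\ell)))\cdot\omega(\|x-y\|)$. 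As $\ell\to\infty$, only the $m=1$ contributions survive; these come from the $\beta=0$ second subterm (one term) together with the first subterms indexed by $|\beta|=1$, which contribute $\sum_i\binom{\alpha}{e_i}=|\alpha|=k$ terms in the Leibniz sum, totalling $k+1$ surviving contributions and producing the factor $(k+1)$ in the limiting constant.

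The main obstacle is the constant bookkeeping: extracting the precise factor $c_{k,n}\cdot 4\sqrt n\cdot(k+1)$ in the limit requires careful attention to the particular norm used in the MVT and to the exact value of $\lim_{\ell\to\infty}T_\ell/(\ell\cdot\omega(T_\ell))=4\sqrt n\cdot \lim_{t\to\infty}1/\omega(t)$. A secondary technical issue is that after the lattice-translation reduction the pair $(x,y+v)$ may straddle the complement of $\mathrm{supp}\,\rho_\ell$; this is harmless because $\rho_\ell\cdot f$ vanishes identically there, so the Leibniz estimates continue to apply, but it must be checked that no boundary contributions are lost in the reduction.
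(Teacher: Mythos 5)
Your overall strategy coincides with the paper's: Leibniz expansion of $D^\alpha f_\ell$ on the fundamental cube, telescoping split of each summand, conversion of the resulting Lipschitz-type bounds to $\omega$-bounds through the monotonicity of $t/\omega(t)$, and identification of the $k+1$ surviving contributions in the $\ell\to\infty$ limit. Your count (one $\beta=0$ term carrying the $\rho_\ell$-difference, plus $|\alpha|=k$ terms counted with binomial weights from $|\beta|=1$ carrying the $D^{\alpha-\beta}f$-difference) is exactly how the paper arrives at the factor $(1+1/\ell)^{k+1}-1$ and hence at $(k+1)$; the $C_b^k$-norm estimate $1+O(1/\ell)$ matches as well.

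The gap is in the periodicity reduction. You need $T_\ell$ with $\lim_\ell T_\ell/\ell=4\sqrt n$, but a bare lattice translation $y\mapsto y+v$, $v\in 8\ell\sqrt n\,\Z^n$, only guarantees the covering-radius bound $\|x-(y+v)\|\le 4\ell n$ (the Euclidean distance from the center of the fundamental cube of side $8\ell\sqrt n$ to a vertex), so the naive reduction forces $T_\ell\approx 4\ell n$ and inflates the limit constant by a factor $\sqrt n$. The paper gets the sharper constant by a three-way case analysis rooted in the support structure of $f_\ell$: (i) if $x,y$ both lie in the same translate of $\mathbb K_{2\ell}^n$, the Leibniz estimate applies directly with $\|x-y\|\le 4\ell\sqrt n$ (the cube's Euclidean diameter); (ii) if both lie in $\operatorname{supp}f_\ell$ but in different period cells, then $\|x-y\|\ge 8\ell\sqrt n-4\ell\sqrt n=4\ell\sqrt n$, and translating $y$ into $x$'s cell strictly decreases the argument of $\omega$ while leaving the numerator unchanged, so case (i) applies; (iii) if $y\notin\operatorname{supp}f_\ell$, one replaces $y$ by the boundary point $y'$ on the segment $[x,y]$, shrinking the distance and reducing to (i). The ``secondary technical issue'' you flag about the pair straddling the complement of the support is not secondary: it is exactly where the sharp $4\sqrt n$ (rather than $4n$) comes from, and the lattice translation alone does not deliver it.
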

\begin{proof}
We use the standard multi-index notation.
According to the general Leibniz rule, for $\alpha\in\mathbb Z_+^n$, $|\alpha|\le k$,
\[
D^\alpha f_\ell=\sum_{\nu\, :\, \nu\le\alpha}\binom{\alpha}{\nu}(D^\nu\rho_\ell)\cdot (D^{\alpha-\nu}f)\quad {\rm on}\quad \mathbb K_{4\ell\sqrt n}^n .
\]
From here and \eqref{rhol} we get for $\alpha\in \mathbb Z_+^n$, $|\alpha|\ge 1$,
\[
\begin{array}{l}
\displaystyle
\sup_{x\in\mathbb R^n}|D^\alpha f_\ell(x)|\le \|f\|_{C_b^k(\mathbb R^n)}\cdot\left(\sum_{\nu\, :\, 0< \nu\le\alpha}\binom{\alpha}{\nu}\cdot\frac{c_{k,n}}{\ell^{|\nu|}}+1\right)\medskip\\
\displaystyle \le \|f\|_{C_b^k(\mathbb R^n)}\cdot\left(c_{k,n}\cdot\left(\left(1+\frac 1\ell \right)^{|\alpha|}-1\right)+1\right)\le \|f\|_{C_b^k(\mathbb R^n)}\cdot\left( \left(1+\frac 1\ell\right)^{|\alpha|-1}\cdot \frac{c_{k,n}\cdot |\alpha|}{\ell}+1  \right).
\end{array}
\]
Hence,
\[
 \|f_\ell\|_{C_b^k(\mathbb R^n)}\le\left( \left(1+\frac 1\ell\right)^{\max\{k-1,0\}}\cdot \frac{c_{k,n}\cdot k}{\ell}+1  \right)\cdot \|f\|_{C_b^k(\mathbb R^n)}=:C_1(\ell,k,n)\cdot \|f\|_{C_b^k(\mathbb R^n)}.
\]

Similarly, for $\alpha\in\mathbb Z_+^n$, $|\alpha|=k$, and $x,y \in \mathbb K_{2\ell}^n$ using properties of $\omega$ we obtain\smallskip
\begin{equation}\label{eq2.9}
\begin{array}{lr}
\qquad\\
\displaystyle
 |D^\alpha f_\ell (x)-D^\alpha f_\ell (y)|\medskip\\
\displaystyle \le \sum_{\nu\, :\, \nu\le\alpha}\binom{\alpha}{\nu}\cdot \bigl(| D^\nu \rho_\ell(x)-D^\nu\rho_\ell(y)|\cdot |D^{\alpha-\nu}f(x)  |+| D^\nu\rho_\ell(y)|\cdot |D^{\alpha-\nu}f(x)-D^{\alpha-\nu}f(y)|\bigr)\medskip\\
\displaystyle \le \|f\|_{C_b^{k,\omega}(\mathbb R^n)}
\cdot\left(\sum_{\nu\, :\, 0<\nu\le\alpha}\binom{\alpha}{\nu}\cdot c_{k,n}\cdot\left(\frac{\|x-y\|}{\ell^{|\nu|+1}}+\frac{\|x-y\|}{\ell^{|\nu|}}\right)+\frac{c_{k,n}\cdot\|x-y\|}{\ell}+\omega(\|x-y\|)\right)\medskip\\
\displaystyle =  \|f\|_{C_b^{k,\omega}(\mathbb R^n)}\cdot\left(c_{k,n}\cdot\left(\left(1+\frac 1\ell\right)^{k+1}-1\right)\cdot \|x-y\|+\omega(\|x-y\|\right)\medskip\\
\displaystyle \le   \|f\|_{C_b^{k,\omega}(\mathbb R^n)}\cdot \omega(\|x-y\|)\cdot\left( c_{k,n}\cdot \left(1+\frac 1\ell\right)^{k}\cdot\frac{4\ell\sqrt n \cdot (k+1)}{\ell\cdot\omega\bigl(4\ell \sqrt n\bigr)}+1\right)\medskip\\
\displaystyle =:\|f\|_{C_b^{k,\omega}(\mathbb R^n)}\cdot \omega(\|x-y\|)\cdot C_2(\ell,k,n,\omega).
\end{array}
\end{equation}
Observe that
\begin{equation}\label{equ4.18}
\lim_{\ell\rightarrow\infty}C_{2}(\ell,k,n,\omega)=1+c_{k,n}\cdot 4\sqrt n\cdot (k+1)\cdot\lim_{t\rightarrow\infty}\,\frac{1}{\omega(t)}.
\end{equation}

Next, assume that $x,y\in {\rm supp}\, f_\ell$. Since the case $x,y\in \mathbb K_{2\ell}^n$ was considered above, without loss of generality we may assume that $x\in \mathbb K_{2\ell}^n$ and 
 $y\in v+\mathbb K_{2\ell}^n$ for some $v\in (8\ell\sqrt n\cdot\mathbb Z^n)\setminus\{0\}$. Then for $y':=y-v\in \mathbb K_{2\ell}^n$ we have  $D^\alpha f_\ell(y')=D^\alpha f_\ell(y)$.  Also, 
 \[
 \|x-y\|=\|x-y'-v\|\ge \|v\|-\|x-y'\|\ge 8\ell\sqrt n- 4\ell\sqrt n=4\ell\sqrt n.
 \]
 Therefore for $\alpha\in\mathbb Z_+^n$, $|\alpha|=k$,
 \[
 \begin{array}{l}
 \displaystyle
 |D^\alpha f_\ell (x)-D^\alpha f_\ell (y)|= |D^\alpha f_\ell (x)-D^\alpha f_\ell (y')|\le C_2(\ell,k,n,\omega)\cdot\|f\|_{C_b^{k,\omega}(\mathbb R^n)}\cdot \omega(\|x-y'\|)\medskip\\
 \displaystyle \le C_2(\ell,k,n,\omega)\cdot\|f\|_{C_b^{k,\omega}(\mathbb R^n)}\cdot \omega(4\ell\sqrt n)\le C_2(\ell,k,n,\omega)\cdot\|f\|_{C_b^{k,\omega}(\mathbb R^n)}\cdot \omega(\|x-y\|).
 \end{array}
 \]
 
Finally, in the case $x\in {\rm supp}\, f_\ell \cap \mathbb K_{2\ell}^n$, $y\not\in {\rm supp}\, f_\ell$, there exists a point $y'\in \mathbb K_{2\ell}^n$ lying on the interval joining $x$ and $y$ such that $f_\ell(y')=0$. Since $\|x-y'\|\le \|x-y\|$ and inequality \eqref{eq2.9} is valid for $x$ and $y'$, a similar inequality is valid for $x$ and $y$. Hence, combining the considered cases we conclude that (cf. \eqref{eq5})
\[
|f_\ell|_{C_b^{k,\omega}(\mathbb R^n)}\le C_2(\ell,k,n,\omega)\cdot\|f\|_{C_b^{k,\omega}(\mathbb R^n)}.
\]

Therefore the inequality of the lemma is valid with 
\[
C(\ell,k,n,\omega):=\max\{C_1(\ell,k,n), C_2(\ell,k,n,\omega)\},
\]
see \eqref{equ4.18}.
\end{proof}

We set 
\begin{equation}\label{eq2.10}
\lambda:=\frac{4\ell\sqrt n}{\pi}.
\end{equation}
For a natural number $N$ and $x=(x_1,\dots, x_n)\in\mathbb R^n$ we define
\begin{equation}\label{eq2.11}
(E_Nf_\ell)(x)=\int_{-\pi}^\pi\cdots\int_{-\pi}^\pi f_\ell(x_1-\lambda t_1,\dots, x_n-\lambda t_n)J_N(t_1)\cdots J_N(t_n)\, dt_1\cdots dt_n.
\end{equation}
\begin{Lm}\label{lem2.2}
Function $E_Nf_\ell$ satisfies the following properties: 
\begin{itemize}
\item[(a)] $(E_Nf_\ell)(\lambda x)$, $x\in\mathbb R^n$, is the trigonometric polynomial of degree at most $N$ in each coordinate; 
\item[(b)] 
\[
\|E_Nf_\ell\|_{C_b^{k,\omega}(\mathbb R^n)}\le C_\ell\cdot\|f\|_{C_b^{k,\omega}(\mathbb R^n)};
\]
\item[(c)] There is a constant $c_N$, $\lim_{N\rightarrow\infty} c_N=0$, such that
\[
\|f_\ell-E_Nf_\ell\|_{C_b^{k}(\mathbb R^n)}\le c_N\cdot\|f\|_{C_b^{k,\omega}(\mathbb R^n)}.
\]
\end{itemize}
\end{Lm}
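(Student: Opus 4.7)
The plan is to treat the three assertions essentially independently, exploiting the structure of $E_N f_\ell$ as an $n$-fold tensor convolution of $f_\ell$ with the Jackson kernel $J_N$ after the change of scale $t\mapsto \lambda t$.

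For part (a), the key observation is that $f_\ell$ is $8\ell\sqrt n$-periodic in each coordinate and that $\lambda\cdot 2\pi = 8\ell\sqrt n$ by the definition \eqref{eq2.10} of $\lambda$. Substituting $x\mapsto \lambda x$ in \eqref{eq2.11}, the integrand becomes $f_\ell(\lambda(x_1-t_1),\dots,\lambda(x_n-t_n))J_N(t_1)\cdots J_N(t_n)$, and the function $y\mapsto f_\ell(\lambda y)$ is $2\pi$-periodic in each variable. Thus $(E_Nf_\ell)(\lambda x)$ is, in each coordinate, the one-dimensional Jackson operator $L_N$ applied to a $2\pi$-periodic continuous function. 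Since $L_N$ produces trigonometric polynomials of degree at most $N$, iterating over the $n$ variables yields (a).

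For part (b), I would differentiate under the integral sign: for any $\alpha$ with $|\alpha|\le k$,
\[
D^\alpha(E_Nf_\ell)(x)=\int_{-\pi}^\pi\!\!\cdots\!\!\int_{-\pi}^\pi (D^\alpha f_\ell)(x-\lambda t)\, J_N(t_1)\cdots J_N(t_n)\, dt_1\cdots dt_n,
\]
which is legitimate because $f_\ell\in C^k_b(\RR^n)$. Using $J_N\ge 0$ and $\int_{-\pi}^\pi J_N=1$, both the sup-norm estimates $\sup_x|D^\alpha(E_Nf_\ell)(x)|\le \sup_x|D^\alpha f_\ell(x)|$ and the $\omega$-modulus estimates $|D^\alpha(E_Nf_\ell)(x)-D^\alpha(E_Nf_\ell)(y)|\le |f_\ell|_{C_b^{k,\omega}(\RR^n)}\cdot\omega(\|x-y\|)$ follow by pulling the differences inside and integrating. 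Combining with Lemma \ref{norm} gives $\|E_Nf_\ell\|_{C_b^{k,\omega}(\RR^n)}\le\|f_\ell\|_{C_b^{k,\omega}(\RR^n)}\le C_\ell\|f\|_{C_b^{k,\omega}(\RR^n)}$.

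For part (c) — which I expect to be the technical core — I would again bring $D^\alpha$ inside the integral and use $\int J_N=1$ to write
\[
D^\alpha f_\ell(x)-D^\alpha(E_Nf_\ell)(x)=\int\!\!\cdots\!\!\int\bigl(D^\alpha f_\ell(x)-D^\alpha f_\ell(x-\lambda t)\bigr)J_N(t_1)\cdots J_N(t_n)\,dt.
\]
Now I would iterate the one-dimensional Jackson estimate \eqref{jack2} in each coordinate, so the sup-norm of the left-hand side is bounded by a dimensional constant times the modulus of continuity of $D^\alpha f_\ell$ evaluated at $\lambda/N$. For $|\alpha|=k$ this modulus is at most $C_\ell\|f\|_{C_b^{k,\omega}(\RR^n)}\cdot\omega(\lambda/N)$ by Lemma \ref{norm}, while for $|\alpha|<k$ the derivative $D^\alpha f_\ell$ is itself Lipschitz with constant $\le C_\ell\|f\|_{C_b^{k,\omega}(\RR^n)}$, so the modulus is $\le C_\ell\|f\|_{C_b^{k,\omega}(\RR^n)}\cdot\lambda/N$. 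Setting
\[
c_N:=\tilde c_{k,n}\cdot C_\ell\cdot\max\{\omega(\lambda/N),\,\lambda/N\}
\]
for an appropriate dimensional constant $\tilde c_{k,n}$ and recalling that $\lambda$ is fixed (depending on $\ell,n$) while $\omega(t)\to 0$ as $t\to 0^+$ by condition (ii) on $\omega$, we obtain $c_N\to 0$ as $N\to\infty$, as required. The main obstacle is handling the multivariate Jackson estimate cleanly; I would circumvent it by iterating the one-dimensional version one variable at a time, using that each intermediate partial Jackson operator still has $L^1$-norm one and is a contraction in sup-norm.
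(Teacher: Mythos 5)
Your proof is correct and follows essentially the same route as the paper: part (a) by rescaling and iterating the one-dimensional Jackson operator, part (b) by differentiating under the integral and using that $E_N$ is a positive, mass-one convolution (hence a contraction on $\|\cdot\|_{C_b^k}$ and on the $\omega$-seminorm), and part (c) by a coordinate-by-coordinate telescoping application of the one-dimensional Jackson estimate combined with Lemma \ref{norm}. One small point in your favor: your $c_N$ correctly carries the rescaling factor $\lambda=4\ell\sqrt n/\pi$, i.e.\ involves $\omega(\lambda/N)$ and $\lambda/N$ rather than the paper's $\omega(1/N)$ and $1/N$ — since the Jackson bound for $g(y)=D^\alpha f_\ell(\lambda y)$ produces $\omega(g,1/N)=\omega(D^\alpha f_\ell,\lambda/N)$, the paper's displayed constants appear to drop a factor of $\lambda$; for the lemma as stated (with $\ell$ fixed) this makes no difference, since $c_N\to 0$ either way.
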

\begin{proof}
(a) If we set $f_\ell^\lambda (x):=f_\ell(\lambda x)$, $x=(x_1,\dots, x_n)\in \mathbb R^n$, then 
\[
E_Nf_\ell(\lambda x)=(L_N^1\cdots L_N^nf_\ell^\lambda)(x),
\]
where $L_N^i$ is the Jackson operator \eqref{jack1} acting on univariate functions in variable $x_i$, $1\le i\le n$. This and the properties of $L_N^i$ give the required statement.\smallskip

(b) According to definition \eqref{eq2.11}, for each $\alpha\in\mathbb Z_+^n$, $|\alpha|\le k$,
\begin{equation}\label{eq2.12}
\sup_{x\in\mathbb R^N}|D^\alpha(E_Nf_\ell)(x)|=\sup_{x\in\mathbb R^n}|(E_N D^\alpha f_\ell)(x)|\le \|f_\ell\|_{C_b^k(\mathbb R^n)}\le C_\ell\cdot \|f\|_{C_b^k(\mathbb R^n)}.
\end{equation}

 In turn, if $|\alpha|=k$ and $x,y\in\mathbb R^n$, then
 \begin{equation}\label{eq2.13}
 \begin{array}{l}
 \displaystyle
 |D^\alpha (E_Nf_\ell)(x)-D^\alpha(E_Nf_\ell)(y)|\le 
 |(E_N D^\alpha f_\ell)(x)- (E_ND^\alpha f_\ell)(y)|\medskip\\
 \displaystyle \le |f_\ell|_{C_b^{k,\omega}(\mathbb R^n)}\cdot \omega(\|x-y\|)\le 
 C_\ell\cdot |f|_{C_b^{k,\omega}(\mathbb R^n)}\cdot \omega(\|x-y\|).
\end{array}
 \end{equation}
 Thus \eqref{eq2.12} and \eqref{eq2.13} give the required statement.\smallskip
 
 (c) For each $\alpha\in\mathbb Z_+^n$, $|\alpha|\le k-1$, using \eqref{eq2.11} and due to \eqref{jack2} one obtains\smallskip
 \begin{equation}\label{eq2.14}
 \\
 \begin{array}{l}
 \displaystyle
 \sup_{x\in\mathbb R^n}|D^\alpha\bigl( f_\ell -E_Nf_\ell\bigr)(x)|=\sup_{x\in\mathbb R^n}|D^\alpha f_\ell(x)-(E_ND^\alpha f_\ell)(x)|\medskip\\
 \displaystyle \le \sup_{x\in\mathbb R^n}\bigl\{|D^\alpha f_\ell(\lambda x)-(L_N^1D^\alpha f_\ell)(\lambda x)|+|\bigl(L_N^1\bigl(D^\alpha f_\ell-L_N^2D^\alpha f_\ell\bigr)\bigr)(\lambda x)|+\,\cdots \medskip\\
 \displaystyle  + |\bigl(L_N^1\cdots L_N^{n-1}\bigl(D^\alpha f_\ell-L_N^nD^\alpha f_\ell\bigr)\bigr)(\lambda x)| \bigr\}\le
 \frac{c\cdot n}{N}\cdot\|f_\ell\|_{C_b^k(\mathbb R^n)}\le \frac{c\cdot n\cdot C_\ell}{N}\cdot\|f\|_{C_b^k(\mathbb R^n)}.
 \end{array}
 \end{equation}
 Similarly, for $|\alpha|=k$, we have\smallskip
 \begin{equation}\label{eq2.15}
 \sup_{x\in\mathbb R^n}|D^\alpha(f_\ell -E_Nf_\ell)(x)|\le c\cdot n\cdot C_\ell\cdot\omega(\mbox{$\frac 1N$})\cdot |f|_{C_b^{k,\omega}(\mathbb R^n)}.
 \end{equation}
 Equations \eqref{eq2.14} and \eqref{eq2.15} imply that the required statement is valid with 
 \[
 c_N:=c\cdot n\cdot C_\ell\cdot \max\left\{\mbox{$\frac 1N ,\, \omega(\frac 1N)$}\right\}.
 \]
 \end{proof}
 \begin{proof}[Proof of Theorem \ref{te1.3}\,(1)] For $f\in C_b^{k,\omega}(\RR^n)$ we set
\begin{equation}\label{eq3.18}
L_{N,\ell}f:=E_Nf_\ell.
\end{equation}
 According to Lemma \ref{lem2.2}, $L_{N,\ell}: C_b^{k,\omega}(\RR^n)\rightarrow C_b^{k,\omega}(\RR^n)$ is a finite rank bounded linear operator of norm $\le C_\ell$.
  \begin{Lm}\label{lem3.3}
 Operators $L_{N,\ell}$ are weak$\,^*$ continuous. 
 \end{Lm}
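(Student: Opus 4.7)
The plan is to invoke the standard duality principle: a bounded linear map $T\in \mathcal L(E^*, F^*)$ between dual Banach spaces is weak$^*$-to-weak$^*$ continuous if and only if its second adjoint $T^*$ sends the canonical image of $F$ in $F^{**}$ into the canonical image of $E$ in $E^{**}$. By Theorem \ref{te1.2} we identify $C_b^{k,\omega}(\RR^n) = (G_b^{k,\omega}(\RR^n))^*$, so it suffices to show that the adjoint $L_{N,\ell}^*\colon(C_b^{k,\omega}(\RR^n))^*\to (C_b^{k,\omega}(\RR^n))^*$ carries the closed subspace $G_b^{k,\omega}(\RR^n)$ into itself. Since $L_{N,\ell}^*$ is bounded and ${\rm span}\{\delta_x^0:x\in\RR^n\}$ is dense in $G_b^{k,\omega}(\RR^n)$, it is enough to verify $L_{N,\ell}^*(\delta_x^0)\in G_b^{k,\omega}(\RR^n)$ for every $x\in\RR^n$.

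Next, I would unfold $L_{N,\ell}^*(\delta_x^0)$ as an integral against a continuous kernel. By definition $L_{N,\ell}^*(\delta_x^0)(f) = (E_N f_\ell)(x)$. Starting from \eqref{eq2.11}, change variables $u_i = x_i-\lambda t_i$ and note that both $f_\ell$ and $\prod_{i=1}^n J_N((x_i-u_i)/\lambda)$ are $8\ell\sqrt n$-periodic in $u$ (the first by construction \eqref{eq3.10}, the second because $J_N$ is $2\pi$-periodic and $2\pi\lambda = 8\ell\sqrt n$). Replacing the shifted integration cube by the fundamental period $\mathbb K_{4\ell\sqrt n}^n$ and using that $\rho_\ell$ has support in $\mathbb K_{2\ell}^n$ yields the explicit representation
\[
L_{N,\ell}^*(\delta_x^0)(f) = \int_{\mathbb K_{2\ell}^n} f(y)\,K_{N,\ell}(x,y)\,dy,\qquad K_{N,\ell}(x,y) := \lambda^{-n}\rho_\ell(y)\prod_{i=1}^n J_N\!\left(\frac{x_i-y_i}{\lambda}\right),
\]
with $K_{N,\ell}(x,\cdot)$ a smooth function of compact support in $\mathbb K_{2\ell}^n$.

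The final step rewrites this as a Bochner integral of evaluation functionals. The map $y\mapsto \delta_y^0$ from $\mathbb K_{2\ell}^n$ into $G_b^{k,\omega}(\RR^n)$ is norm-continuous, since for $\|f\|_{C_b^{k,\omega}(\RR^n)}\le 1$ one has $|f(y)-f(y')|\le\max(\|y-y'\|,\omega(\|y-y'\|))$ by Definition \ref{def1}. Hence $y\mapsto K_{N,\ell}(x,y)\cdot\delta_y^0$ is a continuous map from a compact set into $G_b^{k,\omega}(\RR^n)$, and the Bochner integral
\[
L_{N,\ell}^*(\delta_x^0) = \int_{\mathbb K_{2\ell}^n} K_{N,\ell}(x,y)\cdot\delta_y^0\,dy
\]
is the norm-limit of its Riemann sums. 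Each Riemann sum is a finite linear combination of $\delta_y^0$'s, hence lies in the closed subspace $G_b^{k,\omega}(\RR^n)$; therefore so does the limit, giving $L_{N,\ell}^*(\delta_x^0)\in G_b^{k,\omega}(\RR^n)$ and completing the proof.

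The main obstacle is packaging the Jackson convolution into the clean kernel representation above; once the periodicity matching $2\pi\lambda = 8\ell\sqrt n$ is noticed, the rest is a transparent Bochner-integral argument. As an alternative that sidesteps the explicit kernel, one may observe that $G_b^{k,\omega}(\RR^n)$ is separable (being the closed span of the continuous image $\{\delta_x^0:x\in\RR^n\}$ of a separable space), so the weak$^*$ topology on balls of $C_b^{k,\omega}(\RR^n)$ is metrizable, and one can combine Lemma \ref{lem2.2}(b), Proposition \ref{prop3.1}, and dominated convergence applied to \eqref{eq2.11} to check sequential weak$^*$ continuity on bounded sets, which then upgrades to full weak$^*$ continuity by linearity.
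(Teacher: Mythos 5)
Your proof is correct, and your main argument is a genuinely different route from the paper's. The paper establishes weak$^*$ continuity by a sequential argument: invoking separability of $G_b^{k,\omega}(\RR^n)$ to reduce to sequences, then using Banach--Steinhaus, Proposition~\ref{prop3.1} (pointwise convergence of derivatives characterizes weak$^*$ convergence for bounded sequences), and dominated convergence on the convolution \eqref{eq2.11}. You instead exhibit the pre-adjoint of $L_{N,\ell}$ directly: after the change of variables $u=x-\lambda t$ and the observation that $2\pi\lambda = 8\ell\sqrt n$ matches the period of $f_\ell$, you obtain the compactly supported continuous kernel $K_{N,\ell}$ and the Bochner-integral identity $L_{N,\ell}^*(\delta_x^0)=\int_{\mathbb K_{2\ell}^n} K_{N,\ell}(x,y)\,\delta_y^0\,dy\in G_b^{k,\omega}(\RR^n)$, which gives $L_{N,\ell}^*\bigl(G_b^{k,\omega}(\RR^n)\bigr)\subset G_b^{k,\omega}(\RR^n)$ and hence weak$^*$ continuity by the standard pre-adjoint criterion. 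Your argument is cleaner in one respect: it bypasses the reduction from weak$^*$ continuity to sequential weak$^*$ continuity, which in the paper rests on a claim that $C_b^{k,\omega}(\RR^n)$ with the weak$^*$ topology is a Fréchet space (it is not metrizable on the whole space; only on bounded sets --- the intended argument is recoverable but the stated justification is imprecise). Your approach also pays a small dividend later: it shows concretely that $H_{N,\ell}=L_{N,\ell}^*|_{G_b^{k,\omega}(\RR^n)}$ is the kernel operator $g\mapsto\int K_{N,\ell}(\cdot,y)\,\delta_y^0\,dy$ applied to the appropriate representation of $g$. The only minor imprecision is your claimed Lipschitz constant for $y\mapsto\delta_y^0$ when $k\ge1$ (a factor of $\sqrt n$ is missing), but this does not affect the norm-continuity you need. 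Your sketched ``alternative'' is essentially the paper's own proof.
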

 \begin{proof} Since $C_b^{k,\omega}(\RR^n)=\bigl(G_b^{k,\omega}(\RR^n)\bigr)^*$ and $G_b^{k,\omega}(\RR^n)$ is separable, $C_b^{k,\omega}(\RR^n)$ equipped with the weak$^*$ topology is a Frechet space. Then  $L_{N,\ell}$ is weak$^*$ continuous if and only if for each sequence $\{f_i\}_{i\in\N}\subset C_b^{k,\omega}(\RR^n)$ weak$^*$ converging to  $0\in C_b^{k,\omega}(\RR^n)$ the sequence $\{L_{N,\ell}f_i\}_{i\in\N}$ weak$\,^*$ converges to $0$ as well. Note that such a sequence $\{f_i\}_{i\in\N}$ is bounded in $C_b^{k,\omega}(\RR^n)$ due to the Banach-Steinhaus theorem.
 Then $\{L_{N,\ell}f_i\}_{i\in\N}$ is bounded as well and according to Proposition \ref{prop3.1} we must prove only that
 \begin{equation}\label{eq3.19}
 \lim_{i\rightarrow\infty}D^\alpha (L_{N,\ell}f_i)(x)=0\quad {\rm for\ all}\quad \alpha\in\Z_+^n,\ 0\le |\alpha|\le k,\ x\in\RR^n.
 \end{equation}
 Further, since $D^\alpha (L_{N,\ell}f_i)(x)=(E_ND^\alpha (f_i)_\ell)(x)$ for such $\alpha$ and $x$, $\{D^\alpha (f_i)_\ell\}_{i\in\N}$ is a bounded sequence of continuous functions and $E_N$ is the convolution operator with the absolutely integrable kernel, to establish \eqref{eq3.19} it suffices to prove (due to the Lebesgue dominated convergence theorem) that
 \[
 \lim_{i\rightarrow\infty}D^\alpha (f_i)_\ell(x)=0 \quad {\rm for\ all}\quad \alpha\in\Z_+^n,\ 0\le |\alpha|\le k,\ x\in\RR^n.
 \]
 The latter follows directly from the definition of $(f_i)_\ell$, see \eqref{eq3.10}, the general Leibniz rule and the fact that $D^\alpha f_i(x)\rightarrow 0$ as $i\rightarrow\infty$ for all the required $\alpha$ and $x$ (because $\{f_i\}_{i\in\N}$ weak$^*$ converges to $0$). Thus we have proved that operators $L_{N,\ell}$ are weak$^*$ continuous.
 \end{proof}
 Lemma \ref{lem3.3} implies that there exists a bounded operator of finite rank $H_{N,\ell}$ on $G_b^{k,\omega}(\RR^n)$ whose adjoint $H_{N,\ell}^*$ coincides with $L_{N,\ell}$.
 \begin{Lm}\label{lem3.5}
 The sequence of finite rank bounded operators $\{H_{N,N}\}_{N\in\N}$ converges pointwise to the identity operator on $G_b^{k,\omega}(\RR^n)$.
 \end{Lm}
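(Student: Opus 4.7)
The plan is to combine uniform boundedness of the family $\{H_{N,N}\}$ with pointwise convergence on a dense subset of $G_b^{k,\omega}(\RR^n)$, exploiting the explicit description of generators given by Corollary \ref{cor2.3}.

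First, I would observe that by Lemma \ref{lem2.2}(b) and the definition $H_{N,\ell}^{*}=L_{N,\ell}$, the operators $H_{N,N}$ satisfy $\|H_{N,N}\| = \|L_{N,N}\| \le C_N$, and by Lemma \ref{norm} the sequence $\{C_N\}$ is bounded (converging to $1+c_{k,n}\cdot 4\sqrt n\cdot(k+1)\cdot\lim_{t\to\infty}\frac{1}{\omega(t)}$). Hence $\sup_N\|H_{N,N}\|<\infty$, and it suffices to verify pointwise convergence on a dense subset of $G_b^{k,\omega}(\RR^n)$. By Corollary \ref{cor2.3}, the linear span of vectors of the form $\delta_x^\alpha$ ($|\alpha|\le k$) and $\frac{\delta_x^\alpha-\delta_y^\alpha}{\omega(\|x-y\|)}$ ($|\alpha|=k$, $x\ne y$) is dense; by linearity it is enough to handle vectors $\delta_x^\alpha$.

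Next, I would compute $H_{N,N}(\delta_x^\alpha)$ explicitly: for every $f\in C_b^{k,\omega}(\RR^n)$,
\[
\bigl(H_{N,N}(\delta_x^\alpha)\bigr)(f) = \delta_x^\alpha(L_{N,N}f) = D^\alpha(E_N f_N)(x).
\]
Thus the norm $\|H_{N,N}(\delta_x^\alpha)-\delta_x^\alpha\|_{G_b^{k,\omega}(\RR^n)}$ equals
\[
\sup_{\|f\|_{C_b^{k,\omega}(\RR^n)}\le 1}\bigl|D^\alpha(E_N f_N)(x)-D^\alpha f(x)\bigr|.
\]
Now fix $x$ and pick $N$ large enough that $x\in\mathbb K_N^n$. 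Since $f_N$ coincides with $f$ on $\mathbb K_N^n$ (because $\rho_N\equiv 1$ there), we have $D^\alpha f_N(x)=D^\alpha f(x)$ for every $|\alpha|\le k$. Therefore
\[
\bigl|D^\alpha(E_N f_N)(x)-D^\alpha f(x)\bigr|=\bigl|D^\alpha(E_N f_N)(x)-D^\alpha f_N(x)\bigr|\le \|E_Nf_N-f_N\|_{C_b^k(\RR^n)}\le c_N\cdot\|f\|_{C_b^{k,\omega}(\RR^n)}
\]
by Lemma \ref{lem2.2}(c), where $c_N\to 0$. Taking the supremum over the unit ball and letting $N\to\infty$ yields $H_{N,N}(\delta_x^\alpha)\to\delta_x^\alpha$ in $G_b^{k,\omega}(\RR^n)$.

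This pointwise convergence on the generating set, combined with linearity and the uniform norm bound $\sup_N\|H_{N,N}\|<\infty$, extends via a standard $3\varepsilon$-argument to all of $G_b^{k,\omega}(\RR^n)$, completing the proof. I do not anticipate a serious obstacle: the heavy lifting was done in Lemmas \ref{norm} and \ref{lem2.2}; the only subtle point is to verify that the domain truncation by $\rho_N$ does not interfere at the test point $x$, which is handled by the observation that $f_N=f$ on $\mathbb K_N^n$ for large $N$.
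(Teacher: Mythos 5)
Your proposal is correct and follows essentially the same route as the paper: reduce to convergence on the generating vectors $\delta_x^\alpha$ by combining the uniform bound $\sup_N\|H_{N,N}\|\le\sup_N C_N<\infty$ with density (from Corollary \ref{cor2.3}), then use that $f_N=f$ near a fixed $x$ for $N$ large together with Lemma \ref{lem2.2}(c). The paper organizes the same estimate slightly differently — it decomposes an arbitrary $g$ as $g'+g''$ with $g'$ a finite combination of generators and $\|g''\|$ small, then estimates each piece directly — but this is just the $3\varepsilon$-argument you invoke, written out inline.
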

 \begin{proof}
 Let $g\in G_b^{k,\omega}(\RR^n)$. Due to Corollary \ref{cor2.3}, given $\varepsilon>0$ there exist $J\in\N$ and families $c_{j\alpha}\in \RR$, $x_{j\alpha}\in\RR^n$,  $1\le j\le J$, $\alpha\in \Z_+^n$, $0\le |\alpha|<k$, and
$d_{j\alpha}\in \RR$, $x_{j\alpha}, y_{j\alpha}\in\RR^n$,  $x_{j\alpha}\ne y_{j\alpha}$, $1\le j\le J$, $\alpha\in \Z_+^n$, $|\alpha|=k$, such that
\[
g=\sum_{j,\alpha}c_{j\alpha}\delta_{x_{j\alpha}}^\alpha+\sum_{j,\alpha}d_{j\alpha}\frac{\delta_{x_{j\alpha}}^\alpha-\delta_{y_{j\alpha}}^\alpha}{\omega(\|x_{j\alpha}-y_{j\alpha} \|)}+g''=:g'+g'',
\]
where
\[
\sum_{j,\alpha}|c_{j\alpha}|+\sum_{j,\alpha}|d_{j\alpha}|\le \|g\|_{G_b^{k,\omega}(\RR^n)}\quad {\rm and}\quad \|g''\|_{G_b^{k,\omega}(\RR^n)}<\frac{\varepsilon}{2(C_N+1)},
\]

see Lemma \ref{norm} for the definition of $C_N$.

Next, for each $f\in C_b^{k,\omega}(\RR^n)$, $\|f\|_{C_b^{k,\omega}(\RR^n)}=1$, we have by means of Lemma \ref{lem2.2},\smallskip
\begin{equation}\label{eq3.20}
\begin{array}{l}
\ \  \ \vspace*{-2mm}\\
\displaystyle
\bigl|f\bigl(H_{NN}\,g-g\bigr)\bigr|=\bigl|\bigl(L_{NN}f-f\bigr)(g)\bigr|\le \bigl|\bigl(L_{NN}f-f\bigr)(g')\bigr|+ \bigl|\bigl(L_{NN}f-f\bigr)(g'')\bigr|\medskip\\
\displaystyle < \bigl|\bigl(E_{N}f_N-f\bigr)(g')\bigr|+\|L_{NN}-{\rm id}\|\cdot\frac{\varepsilon}{2(C_{N}+1)}\le\bigl|\bigl(E_{N}f_N-f\bigr)(g')\bigr|+\frac{\varepsilon}{2}.
\end{array}
\end{equation}
Let $N_0\in\N$ be so large that all points $x_{j\alpha}, y_{j\alpha}$ as above belong to $\mathbb K_{N_0}^n$. Since $f_{N_0}=f$ on $\mathbb K_{N_0}^n$, for all $N\ge N_0$,
\[
\bigl(E_{N}f_N-f\bigr)(g')=\bigl(E_{N}f_N-f_N\bigr)(g').
\]
Hence, due to Lemma \ref{lem2.2}\,(c) for $z=x_{j\alpha}$ or $y_{j\alpha}$,
\[
\bigl|\bigl(E_{N}f_N-f\bigr)(\delta_z^\alpha)\bigr|\le \|E_{N}f_N-f_N\|_{C_b^{k}(\RR^n)} \le c_N.
\]
This implies that for all $N\ge N_0$
\[
\bigl|\bigl(E_{N}f_N-f\bigr)(g')\bigr|\le c_N\cdot\left(\sum_{j,\alpha}|c_{j\alpha}| +\left(\sum_{j,\alpha}2|d_{j\alpha}|\right)\cdot\max_{j,\alpha}\left\{\frac{1}{\omega(\|x_{j\alpha}-y_{j\alpha} \|)}\right\}\right).
\]
 Choose $N_0'\ge N_0$ so large that for all $N\ge N_0'$ the right-hand side of the previous inequality is less than $\frac{\varepsilon}{2}$. Then combining this with \eqref{eq3.20} we get for all $N\ge N_0'$,
 \[
 \|H_{NN}\,g-g\|_{G_b^{k,\omega}(\RR^n)}<\varepsilon.
 \]
 This shows that for all $g\in G_b^{k,\omega}(\RR^n)$
 \[
 \lim_{N\rightarrow\infty}H_{NN}\,g=g
 \]
which completes the proof of the lemma.
\end{proof}

Let us finish the proof of the theorem for $S=\RR^n$.  We set
\begin{equation}\label{equ4.28}
T_{N}:=\left(1+c_{k,n}\cdot 4\sqrt n\cdot (k+1)\cdot\lim_{t\rightarrow\infty}\,\frac{1}{\omega(t)}\right)\cdot\frac{H_{NN}}{C_N},
\end{equation}
see Lemma \ref{norm} for the definition of $C_N$.
Since $\{C_N\}_{N\in\N}$ converges to the first factor in the definition of $T_N$, due to Lemma \ref{lem3.5}  $\{T_N\}_{N\in\N}$ is the sequence of operators of finite rank on $G_b^{k,\omega}(\RR^n)$ of norm at most $\lambda:=1+c_{k,n}\cdot 4\sqrt n\cdot (k+1)\cdot\lim_{t\rightarrow\infty}(1/\omega(t))$ converging pointwise to the identity operator. In particular, this sequence converges uniformly to the identity operator on each compact subset of $G_b^{k,\omega}(\RR^n)$. This shows that $G_b^{k,\omega}(\RR^n)$ has the  $\lambda$-approximation property with respect to the approximating sequence of operators $\{T_N\}_{N\in\N}$.

The proof of Theorem \ref{te1.3} for $S=\RR^n$ is complete.
\end{proof}
\subsection{Proof of Theorem \ref{te1.3}\,(2)} 
\begin{proof}
In the case of $G_b^{k,\omega}(S)$, the required sequence of finite rank linear operators approximating the identity map is
$\bigl\{PT_N|_{G_b^{k,\omega}(S)}\bigr\}_{N\in\N}$, where $T_N$ are linear operators defined by \eqref{equ4.28} and $P: G_b^{k,\omega}(\RR^n)\rightarrow G_b^{k,\omega}(S)$ is the projection of Theorem \ref{teo1.6}. We have
\[
\|PT_N|_{G_b^{k,\omega}(S)}\|\le \|P\|\cdot\|T_N\|=:\|P\|\cdot\lambda(k,n,\omega).
\]
Choosing here $P$ corresponding to the extension operators of papers \cite{Gl} ($k=0$), \cite{BS2} ($k=1$) and \cite{Lu} ($k\ge 2$) we obtain the required result.

The proof of Theorem \ref{te1.3} is complete.
\end{proof}
\subsection{Proof of Theorem \ref{teor1.10}}
\begin{proof}
Due to the result of Pe\l czy\'nski \cite{P} there are a separable Banach space $Y$ with a norm one monotone basis $\{b_j\}_{j\in\N}$, an isomorphic embedding $T:X\rightarrow Y$ with distortion $\|T\|\cdot\|T^{-1}\|\le 4\lambda$,  and a linear projection $P:Y\rightarrow T(X)$ with $\|P\|\le 4\lambda$. For an operator $H\in\mathcal L(G_b^{k,\omega}(\RR^n);X)$ we define
\[
\widetilde H:=T\cdot H\in \mathcal L(G_b^{k,\omega}(\RR^n);Y).
\]
Then for each $x\in\RR^n$, 
\[
\widetilde H(\delta_x^0)=\sum_{j=1}^\infty \tilde h_j(x)\cdot b_j
\]
for some $\tilde h_j(x)\in\RR$, $j\in\N$.

Further, consider the family of bounded linear functionals $\{b_j^*\}_{j\in\N}\subset Y^*$ such that  $b_j^*(b_i)=\delta_{ij}$ (- the Kronecker delta) for all $i, j\in\N$. As the basis $\{b_j\}_{j\in\N}$ is monotone, $\|b_j^*\|\le 2$ for all $j\in\N$. Since $b_j^*\circ\widetilde H\in \bigl(G_b^{k,\omega}(\RR^n)\bigr)^*=C_b^{k,\omega}(\RR^n)$, the functions $\tilde h_j$, $\tilde h_j(x):=(b_j^*\circ \widetilde H)(\delta_x^0)$, $x\in\RR^n$, belong to $C_b^{k,\omega}(\RR^n)$ and 
\begin{equation}\label{equ4.32a}
\|\tilde h_j\|_{C_b^{k,\omega}(\RR^n)}\le 2\cdot\|T\|\cdot\|H\|\quad {\rm for\ all}\quad j\in\N.
\end{equation}
In particular, $(b_j^*\circ\widetilde H)(\delta_x^\alpha)=D^\alpha (b_j^*\circ\widetilde H)(\delta_x^0)=D^\alpha \tilde h_j(x)$ for all $\alpha\in\Z_+^n$, $|\alpha|\le k$, $x\in\RR^n$, $j\in\N$. This implies that for all such $\alpha$ and $x$,
\begin{equation}\label{equ4.32}
\widetilde H(\delta_x^\alpha)=\sum_{j=1}^\infty D^\alpha \tilde h_j(x)\cdot b_j.
\end{equation}

Next, since the range of $\widetilde H$ is the subset of $T(X)$, 
\[
H=T^{-1}\cdot P\cdot \widetilde H.
\]
From here and  \eqref{equ4.32} we obtain, for all $\alpha\in\Z_+^n$, $|\alpha|\le k$, $x\in\RR^n$,
\begin{equation}\label{equ4.33}
H(\delta_x^\alpha)=\sum_{j=1}^\infty D^\alpha \tilde h_j(x)\cdot (T^{-1}\cdot P)(b_j)
\end{equation}
(convergence in $X$).

Finally, we set
\begin{equation}\label{equ4.34}
h_j:=\|(T^{-1}\cdot P)(b_j)\|\cdot\tilde h\quad {\rm and}\quad v_j:=\frac{(T^{-1}\cdot P)(b_j)}{\|(T^{-1}\cdot P)(b_j)\|},\quad j\in\N.
\end{equation}
Then all $v_j\in X$ are of norm one. In turn, all $h_j\in C_b^{k,\omega}(\RR^n)$ and due to \eqref{equ4.32a}, 
\eqref{equ4.34} and the properties of $T$ and $P$ for all $j\in\N$,
\[
\|h_j\|_{C_b^{k,\omega}(\RR^n)}\le \|T^{-1}\|\cdot\|P\|\cdot\|\tilde h_j\|_{C_b^{k,\omega}(\RR^n)}\le  2\cdot \|T\|\cdot\|T^{-1}\|\cdot\|P\|\cdot\|H\|\le 32\cdot\lambda^2\cdot\|H\|.
\]
Moreover, by \eqref{equ4.33}, for all $\alpha\in\Z_+^n$, $|\alpha|\le k$, $x\in\RR^n$,
\[
H(\delta_x^\alpha)=\sum_{j=1}^\infty D^\alpha  h_j(x)\cdot v_j,
\]
as required.

The proof of Theorem \ref{teor1.10} is complete.
\end{proof}

\section{Proofs of Theorem \ref{te1.4} and Corollary \ref{cor1.10}} 
\subsection{Proof of Theorem \ref{te1.4}\,(1)}
\begin{proof}
 Let $\Lambda_{n,k}:=\{\alpha\in \Z_+^n\, :\, |\alpha|\le k\}$. We set 
\begin{equation}\label{eq4.23}
M_{n,k}:=\bigl(\Lambda_{n,k}\times\RR^n\bigr)\sqcup \bigl(\bigl(\Lambda_{n,k}\setminus\Lambda_{n,k-1}\bigr)\times\bigl((\RR^n\times\RR^n)\setminus\Delta_n\bigr)\bigr),
\end{equation}
where $\Delta_n:=\{(x,y)\in\RR^n\times\RR^n\, :\, x=y\}$.

Space $M_{n,k}$ has the natural structure of a $C^\infty$ manifold, in particular, it is a locally compact Hausdorff space.
By $C_b(M_{n,k})$ we denote the Banach space of bounded continuous functions on $M_{n,k}$ equipped with supremum norm. Let us define a linear map $\mathcal I: C_b^{k,\omega}(\RR^n)\rightarrow C_b(M_{n,k})$ by the formula\medskip
\begin{equation}
\mathcal I(f)(m)=\left\{
\begin{array}{lll}
\displaystyle D^\alpha f(x)&{\rm if}&m=(\alpha,x)\in\Lambda_{n,k}\times \RR^n\\
\\
\displaystyle \frac{D^\alpha f(x)-D^\alpha f(y)}{\omega(\|x-y\|)}&{\rm if}&m=(\alpha, (x,y))\in \bigl(\Lambda_{n,k}\setminus\Lambda_{n,k-1}\bigr)\times\bigl(\RR^n\times\RR^n\setminus\Delta_n\bigr),
\end{array}
\right.
\end{equation}
\begin{Proposition}\label{prop5.1}
$\mathcal I$ is a linear isometric embedding.
\end{Proposition}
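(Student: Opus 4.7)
The plan is to unfold both sides of the claimed identity directly from Definition~\ref{def1}; the proposition is essentially a reformulation of that definition in the language of a supremum norm on the parameter space $M_{n,k}$. I would proceed in three short steps, with the only delicate point being a continuity check.

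First, I would verify that $\mathcal I(f)$ actually lies in $C_b(M_{n,k})$ for each $f\in C_b^{k,\omega}(\RR^n)$. On the first component $\Lambda_{n,k}\times\RR^n$, where $\Lambda_{n,k}$ carries the discrete topology of the finite set $\{\alpha\in\Z_+^n\,:\,|\alpha|\le k\}$, continuity at $(\alpha,x)$ reduces to continuity of $D^\alpha f$, which holds because $f\in C^k(\RR^n)$. On the second component, continuity of the map $(x,y)\mapsto\frac{D^\alpha f(x)-D^\alpha f(y)}{\omega(\|x-y\|)}$ away from the diagonal follows from continuity of $D^\alpha f$ for $|\alpha|=k$, continuity of $\omega$, and the fact that $\omega(\|x-y\|)>0$ for $x\ne y$. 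This last positivity is forced by condition (i) in the definition of $\omega$: if $\omega(t_0)=0$ for some $t_0>0$, then $t_0/\omega(t_0)$ would not be a well-defined real number, contradicting monotonicity of $t/\omega(t)$. Boundedness of $\mathcal I(f)$ is then immediate from the finiteness of $\|f\|_{C_b^{k,\omega}(\RR^n)}$.

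Second, linearity of $\mathcal I$ is trivial: $D^\alpha$ is linear, and the normalization by the fixed scalar $\omega(\|x-y\|)$ is linear on each fiber. For the isometry identity itself I would simply compute
\begin{equation*}
\|\mathcal I(f)\|_{C_b(M_{n,k})}=\max\Bigl(\max_{|\alpha|\le k}\sup_{x\in\RR^n}|D^\alpha f(x)|,\ \max_{|\alpha|=k}\sup_{x\ne y}\tfrac{|D^\alpha f(x)-D^\alpha f(y)|}{\omega(\|x-y\|)}\Bigr),
\end{equation*}
which by \eqref{eq3}--\eqref{eq5} is exactly $\|f\|_{C_b^{k,\omega}(\RR^n)}$. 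Injectivity is then automatic: if $\mathcal I(f)=0$, then $\|f\|_{C_b^{k,\omega}(\RR^n)}=0$ and hence $f\equiv 0$.

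There is no serious obstacle here; the only non-automatic ingredient is the continuity check on the second component of $M_{n,k}$, which is handled by the strict positivity of $\omega$ on $(0,\infty)$. The point of the proposition appears to be to realize $C_b^{k,\omega}(\RR^n)$ as a concrete closed subspace of the $C^*$-algebra $C_b(M_{n,k})$, presumably so that duality with measures on $M_{n,k}$ (and, under \eqref{omega2}, the subalgebra $C_0(M_{n,k})$) can be invoked in the proof of Theorem~\ref{te1.4}.
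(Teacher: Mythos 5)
Your proof is correct and takes the same route as the paper, which simply notes that the statement follows straightforwardly from the definitions; you have filled in the details (continuity of $\mathcal I(f)$ on both components of $M_{n,k}$, the identification of $\|\mathcal I(f)\|_{C_b(M_{n,k})}$ with $\|f\|_{C_b^{k,\omega}(\RR^n)}$ by splitting the supremum over the disjoint union, and injectivity) that the paper leaves implicit.
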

\begin{proof}
The statement follows straightforwardly from the definitions of the involved spaces. 
\end{proof}
Since $\mathcal I\bigl(C_b^{k,\omega}(\RR^n)\bigr)$ is a closed subspace of $C_b(M_{n,k})$, the Hahn-Banach theorem implies that the adjoint map 
\begin{equation}\label{eq4.25}
\mathcal I^*:\bigl(C_b(M_{n,k})\bigr)^*\rightarrow \bigl(C_b^{k,\omega}(\RR^n)\bigr)^*
\end{equation}
of $\mathcal I$ is surjective of norm one.
%

Similarly, $\mathcal I$ maps $C_0^{k,\omega}(\RR^n)$ isometrically into the Banach subspace $C_0(M_{n,k})\subset C_b(M_{n,k})$ of continuous functions on $M_{n,k}$ vanishing at infinity. Thus the adjoint of $\mathcal I_0:=\mathcal I |_{C_0^{k,\omega}(\RR^n)}$ is the surjective map of norm one
\begin{equation}\label{eq4.26}
\mathcal I_0^*:\bigl(C_0(M_{n,k})\bigr)^*\rightarrow \bigl(C_0^{k,\omega}(\RR^n)\bigr)^*.
\end{equation}
According to the Riesz representation theorem (see, e.g.,\cite{DS}), $\bigl(C_0(M_{n,k})\bigr)^*$ is isometrically isomorphic to the space of countably additive regular Borel measures on $M_{n,k}$ with the norm being the total variation of measure. In what follows we identify these two spaces.

In the proof we use the following result.
\begin{Proposition}\label{prop4.2}
If $\omega$ satisfies condition \eqref{omega2}, then  $C_0^{k,\omega}(\RR^n)$ is weak$^*$ dense in $C_b^{k,\omega}(\RR^n)$.
\end{Proposition}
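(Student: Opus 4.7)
The plan is to exhibit, for every $f \in C_b^{k,\omega}(\RR^n)$, a sequence in $C_c^\infty(\RR^n)$ that weak$^*$ converges to $f$. This suffices for two reasons. First, $C_c^\infty(\RR^n) \subset C_0^{k,\omega}(\RR^n)$ under hypothesis \eqref{omega2}: for $\phi\in C_c^\infty$ and $|\alpha|=k$, the derivative $D^\alpha\phi$ is Lipschitz with some constant $L$ and has compact support, so condition (i) is trivial and
\[
\frac{|D^\alpha\phi(x)-D^\alpha\phi(y)|}{\omega(\|x-y\|)} \le L\cdot\frac{\|x-y\|}{\omega(\|x-y\|)} \xrightarrow{\|x-y\|\to 0} 0
\]
by \eqref{omega2}. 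Second, $G_b^{k,\omega}(\RR^n)$ is separable (the map $x\mapsto\delta_x^0$ is continuous, so rational linear combinations of $\delta_x^0$ at rational $x$ form a countable dense subset), so the closed unit ball of $C_b^{k,\omega}(\RR^n)$ is weak$^*$ metrizable and sequential density in a bounded set is equivalent to density there.

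As approximants I would take
\[
g_{\ell,\epsilon} := \rho_\ell\cdot(f\ast\psi_\epsilon) \;\in\; C_c^\infty(\RR^n),
\]
where $\rho_\ell$ is the cutoff from section 4.3 (equal to $1$ on $\mathbb K_\ell^n$, supported in $\mathbb K_{2\ell}^n$, satisfying \eqref{rhol}) and $\psi_\epsilon$ is a standard $C^\infty$ mollifier. By Proposition \ref{prop3.1}, for a diagonal sequence $g_n := g_{\ell_n,\epsilon_n}$ with $\ell_n\to\infty$ and $\epsilon_n\to 0$ it suffices to verify (a) $\sup_n\|g_n\|_{C_b^{k,\omega}(\RR^n)} < \infty$ and (b) $D^\alpha g_n(x)\to D^\alpha f(x)$ pointwise for all $|\alpha|\le k$. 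Property (b) is immediate: once $\ell_n$ is so large that $\rho_{\ell_n}\equiv 1$ on a neighborhood of $x$, the Leibniz rule collapses $D^\alpha g_n(x)$ to $((D^\alpha f)\ast\psi_{\epsilon_n})(x)$, which tends to $D^\alpha f(x)$ by uniform continuity of $D^\alpha f$.

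The main work is property (a). Since $\|f\ast\psi_\epsilon\|_{C_b^{k,\omega}(\RR^n)} \le \|f\|_{C_b^{k,\omega}(\RR^n)}$, the task reduces to bounding $\|\rho_\ell h\|_{C_b^{k,\omega}(\RR^n)}$ by a constant independent of $\ell$ times $\|h\|_{C_b^{k,\omega}(\RR^n)}$. The sup-norm bound on $\|\rho_\ell h\|_{C_b^k(\RR^n)}$ follows directly from Leibniz and \eqref{rhol}. For the $C^{k,\omega}$-seminorm, essentially the computation in Lemma \ref{norm} (for the non-periodized product $\rho_\ell h$) combined with the fact that $D^\gamma h$ is Lipschitz with constant $\|h\|_{C_b^k(\RR^n)}$ for $|\gamma|<k$ yields, for $|\alpha|=k$,
\[
|D^\alpha(\rho_\ell h)(x) - D^\alpha(\rho_\ell h)(y)| \le C_{k,n}\,\|h\|_{C_b^{k,\omega}(\RR^n)}\bigl(\omega(\|x-y\|) + \|x-y\|\bigr),
\]
with $C_{k,n}$ independent of $\ell\ge 1$. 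To divide by $\omega(\|x-y\|)$ uniformly I would split into cases: for $\|x-y\|\le 1$, the monotonicity of $t/\omega(t)$ gives $\|x-y\|/\omega(\|x-y\|)\le 1/\omega(1)$; for $\|x-y\|>1$, the crude bound $|D^\alpha(\rho_\ell h)(x)-D^\alpha(\rho_\ell h)(y)|\le 2\|\rho_\ell h\|_{C_b^k(\RR^n)}$ combined with $\omega(\|x-y\|)\ge\omega(1)$ yields a uniform bound.

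The principal obstacle is exactly this uniform control of the $C^{k,\omega}$-seminorm: the Leibniz expansion of $\rho_\ell h$ mixes an $\omega$-continuous contribution with Lipschitz cross-terms, and eliminating $\ell$-dependence requires the two-case split above, which exploits both monotonicity conditions in the definition of $\omega$ together with \eqref{omega2}.
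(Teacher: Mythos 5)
Your proof is correct and takes a genuinely different route from the paper. The paper reuses the Jackson-polynomial machinery from Section 4.3: it takes $\hat f_N := \rho_N\cdot L_{NN}f$, where $L_{NN}$ is the finite-rank operator built from the Jackson kernel, and controls the norm via Lemma \ref{norm} and Lemma \ref{lem2.2}\,(b), obtaining $\|\hat f_N\|\le C_N^2\|f\|$ with $\lim_N C_N = 1 + c_{k,n}\cdot 4\sqrt n(k+1)\lim_{t\to\infty}1/\omega(t)$. You instead use plain mollification, $g_{\ell,\epsilon}=\rho_\ell\cdot(f\ast\psi_\epsilon)$, and control the norm by the Leibniz estimate combined with the two-case split $\|x-y\|\le 1$ versus $\|x-y\|>1$, producing a uniform bound of the form $C_{k,n}(1+1/\omega(1))\|f\|$. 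Both arguments invoke Proposition \ref{prop3.1} to conclude weak$^*$ convergence from a uniformly bounded sequence with pointwise convergence of derivatives, so the logical skeleton is identical. The trade-off: your approach is more elementary and self-contained for Proposition \ref{prop4.2} alone, while the paper's sharper, $\ell$-dependent constant $C_\ell$ (which tends to $1$ when $\lim_{t\to\infty}\omega(t)=\infty$) is precisely what is exploited later in the isometry argument of Theorem \ref{te1.4}\,(1); your crude bound $1+1/\omega(1)$ would not deliver that refinement. One small remark: the separability/metrizability observation in your preamble is harmless but unnecessary, since exhibiting a weak$^*$ convergent sequence from $C_0^{k,\omega}(\RR^n)$ to each $f$ already places every $f$ in the weak$^*$ closure without any appeal to metrizability.
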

\begin{proof}
Let $\{L_{NN}\}_{N\in\N}$ be finite rank bounded linear operators on $C_b^{k,\omega}(\RR^n)$ defined by \eqref{eq3.18}. According to Lemma \ref{lem3.5} for each $f\in C_b^{k,\omega}(\RR^n)$ the sequence $\{L_{NN}f\}_{N\in\N}$ weak$^*$ converges to $f$. Moreover, each $L_{NN}f\in C^\infty(\RR^n)$, cf. Lemma \ref{lem2.2}\,(a). We set
\begin{equation}\label{e4.27}
\hat f_N:=\rho_N\cdot L_{NN}f,
\end{equation}
see section~4.3. Then $\hat f_N$ is a $C^\infty$ function with compact support on $\RR^n$ satisfying, due to Lemmas \ref{norm} and \ref{lem2.2}\,(b), the inequality
\begin{equation}\label{e4.28}
\|\hat f_N\|_{C_b^{k,\omega}(\RR^n)}\le C_N^2\|f\|_{C_b^{k,\omega}(\RR^n)},
\end{equation}
where $\lim_{N\rightarrow\infty}C_N=1+c_{k,n}\cdot 4\sqrt n \cdot (k+1)\cdot\lim_{t\rightarrow\infty}\frac{1}{\omega(t)}$.

Clearly, sequence $\{D^\alpha \hat f_N\}_{N\in\N}$  converges pointwise to $D^\alpha f$ for all $\alpha\in \Z_+^n$, $|\alpha|\le k$. Also, due to condition \eqref{omega2}  all $\hat f_N\in C_0^{k,\omega}(\RR^n)$. Hence, according to Proposition \ref{prop3.1}, sequence $\{\hat f_N\}_{N\in\N}$ weak$^*$ converges to $f$. This shows that $C_0^{k,\omega}(\RR^n)$ is weak$^*$ dense in $C_b^{k,\omega}(\RR^n)$.
\end{proof}

Next, let $i^*: \bigl(C_b^{k,\omega}(\RR^n)\bigr)^*\rightarrow \bigl(C_0^{k,\omega}(\RR^n)\bigr)^*$ be the linear surjective map of norm one adjoint to the isometrical embedding 
$i: C_0^{k,\omega}(\RR^n)\hookrightarrow C_b^{k,\omega}(\RR^n)$.
\begin{C}\label{cor4.3}
Restriction of $i^*$ to $G_b^{k,\omega}(\RR^n)$ is injective.
\end{C}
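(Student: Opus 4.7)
The plan is to derive the injectivity of $i^*|_{G_b^{k,\omega}(\RR^n)}$ directly from Proposition \ref{prop4.2}, exploiting the fact that by Theorem \ref{te1.2} the space $C_b^{k,\omega}(\RR^n)$ is the full dual of $G_b^{k,\omega}(\RR^n)$ and so the weak$^*$ topology on $C_b^{k,\omega}(\RR^n)$ pairs precisely with elements of $G_b^{k,\omega}(\RR^n)$.

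First I would unwind the definitions. Take $g\in G_b^{k,\omega}(\RR^n)$ with $i^*(g)=0$. By definition of $i^*$, this means $g(\varphi)=0$ for every $\varphi\in C_0^{k,\omega}(\RR^n)$. To conclude $g=0$ it suffices, by Theorem \ref{te1.2}, to show $g(f)=0$ for every $f\in C_b^{k,\omega}(\RR^n)$.

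Now I would apply Proposition \ref{prop4.2}: given $f\in C_b^{k,\omega}(\RR^n)$, the sequence $\{\hat f_N\}_{N\in\N}\subset C_0^{k,\omega}(\RR^n)$ constructed in \eqref{e4.27} weak$^*$ converges to $f$. Since the weak$^*$ topology on $C_b^{k,\omega}(\RR^n)$ is defined by pairing with elements of $G_b^{k,\omega}(\RR^n)$, this means exactly that $g(\hat f_N)\to g(f)$. But each $\hat f_N\in C_0^{k,\omega}(\RR^n)$, so $g(\hat f_N)=0$ for all $N$, forcing $g(f)=0$. As $f$ was arbitrary, $g=0$.

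There is no real obstacle here: the corollary is a formal consequence of the weak$^*$ density established in Proposition \ref{prop4.2} together with the predual identification from Theorem \ref{te1.2}. The only subtlety to watch is to make sure the weak$^*$ topology invoked matches the one used in Proposition \ref{prop4.2}, i.e.\ the one induced by $G_b^{k,\omega}(\RR^n)$ rather than by the full bidual; once that is flagged, the argument is a one-line diagram chase.
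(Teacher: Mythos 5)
Your proof is correct and follows essentially the same route as the paper: both deduce injectivity from the weak$^*$ density of $C_0^{k,\omega}(\RR^n)$ established in Proposition \ref{prop4.2}, using that this weak$^*$ topology is exactly the one dual to $G_b^{k,\omega}(\RR^n)$. The only cosmetic difference is that you unpack the paper's one-line separation-of-points remark into an explicit limit $g(\hat f_N)\to g(f)$; also, the invocation of Theorem \ref{te1.2} is superfluous, since $g=0$ as an element of $\bigl(C_b^{k,\omega}(\RR^n)\bigr)^*$ already means $g(f)=0$ for all $f\in C_b^{k,\omega}(\RR^n)$.
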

\begin{proof}
Proposition \ref{prop4.2} implies that functions in $C_0^{k,\omega}(\RR^n)$ regarded as linear functionals on $G_b^{k,\omega}(\RR^n)$ separate the points of $G_b^{k,\omega}(\RR^n)\, (\subset  \bigl(C_b^{k,\omega}(\RR^n)\bigr)^*)$. If $i^*(v)=0$ for some $v\in G_b^{k,\omega}(\RR^n)$, then 
\[
0=(i^*(v))(f)=f(v)\quad {\rm for\ all}\quad f\in C_0^{k,\omega}(\RR^n).
\]
Hence, $v=0$.
\end{proof}
We set
\[
\tilde\delta_x^\alpha:=i^*(\delta_x^\alpha),\quad |\alpha|\le k,\ x\in\RR^n.
\]
By definition, maps $\phi_\alpha: \RR^n\rightarrow \bigl(C_b^{k,\omega}(\RR^n)\bigr)^*$, $x\mapsto\delta_x^\alpha$, $|\alpha|\le k$, are continuous and bounded and so are the maps $i^*\circ\phi_\alpha:\RR^n\rightarrow \bigl(C_0^{k,\omega}(\RR^n)\bigr)^*$. 
\begin{Proposition}\label{prop4.4}
The range of $\mathcal I_0^*$ coincides with $i^*(G_b^{k,\omega}(\RR^n))$.
\end{Proposition}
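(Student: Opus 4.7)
The plan is to prove the two inclusions separately. The inclusion $i^*(G_b^{k,\omega}(\RR^n))\subset\mathrm{range}(\mathcal I_0^*)$ is automatic, since $\mathcal I_0^*$ is surjective onto $\bigl(C_0^{k,\omega}(\RR^n)\bigr)^*$ (see \eqref{eq4.26}) and $i^*(v)$ lies in that target for every $v$. The substance is the reverse inclusion: given $\mu\in\bigl(C_0(M_{n,k})\bigr)^*$, identified via the Riesz representation theorem with a finite regular signed Borel measure on $M_{n,k}$, one must produce $v_\mu\in G_b^{k,\omega}(\RR^n)$ with $i^*(v_\mu)=\mathcal I_0^*\mu$.

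My approach is to realize $v_\mu$ as a vector-valued integral of a continuous lift of $\mathcal I$. First I would define a map $J:M_{n,k}\to G_b^{k,\omega}(\RR^n)$ by $J(\alpha,x):=\delta_x^\alpha$ on $\Lambda_{n,k}\times\RR^n$ and $J(\alpha,(x,y)):=\bigl(\delta_x^\alpha-\delta_y^\alpha\bigr)/\omega(\|x-y\|)$ on the second factor of $M_{n,k}$. By construction each $J(m)$ lies in the closed unit ball of $G_b^{k,\omega}(\RR^n)$ (cf.\ Corollary \ref{cor2.3}) and satisfies the pointwise identity $J(m)(f)=\mathcal I(f)(m)$ for every $f\in C_b^{k,\omega}(\RR^n)$. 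Continuity of $J$ on the first factor reduces to the estimates $|D^\alpha f(x)-D^\alpha f(x')|\le\|x-x'\|$ for $|\alpha|<k$ and $|D^\alpha f(x)-D^\alpha f(x')|\le\omega(\|x-x'\|)$ for $|\alpha|=k$, uniform over the unit ball of $C_b^{k,\omega}(\RR^n)$; continuity on the second factor combines these estimates with the continuity and strict positivity of $\omega$ away from the origin.

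Since $M_{n,k}$ is $\sigma$-compact and separable, $J$ is Borel measurable with separable image and $\|J\|_\infty\le 1$. Hence, writing $\mu=\mu^+-\mu^-$ via the Jordan decomposition, each Bochner integral $\int_{M_{n,k}}J\,d\mu^\pm$ exists in the Banach space $G_b^{k,\omega}(\RR^n)$---membership in this closed subspace is automatic since the Bochner integral is a norm-limit of finite linear combinations of values of $J$---so I set $v_\mu:=\int_{M_{n,k}}J\,d\mu\in G_b^{k,\omega}(\RR^n)$. For any $f\in C_0^{k,\omega}(\RR^n)$, viewed via Theorem \ref{te1.2} as a continuous linear functional on $G_b^{k,\omega}(\RR^n)$, commuting $f$ with the Bochner integral yields
\[
v_\mu(f)=\int_{M_{n,k}}J(m)(f)\,d\mu(m)=\int_{M_{n,k}}\mathcal I(f)\,d\mu=(\mathcal I_0^*\mu)(f),
\]
so $i^*(v_\mu)=\mathcal I_0^*\mu$, as required. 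The step I expect to demand the most care is the joint continuity of $J$ on the open set $(\RR^n\times\RR^n)\setminus\Delta_n$ appearing in the second factor of $M_{n,k}$, where the denominator $\omega(\|x-y\|)$ must be controlled simultaneously with $\delta_x^\alpha-\delta_y^\alpha$; once continuity is in place, the rest is routine Bochner theory.
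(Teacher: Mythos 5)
Your proof is correct and follows essentially the same approach as the paper's: both realize elements of $\mathcal I_0^*(\mu)$ via vector-valued (Bochner) integration of the continuous, bounded map sending a point of $M_{n,k}$ to the corresponding evaluation (or normalized difference) functional. The paper decomposes $\mu$ into the pieces $\mu_\alpha^1$, $\mu_\alpha^2$ supported on the individual sheets of $M_{n,k}$ and integrates $i^*\circ\phi_\alpha$ sheet by sheet, whereas you define a single map $J$ on all of $M_{n,k}$, integrate it in $G_b^{k,\omega}(\RR^n)$ to get $v_\mu$, and only then apply $i^*$; by linearity and continuity of $i^*$ these are the same computation, and both rely on the same continuity estimates for $x\mapsto\delta_x^\alpha$ (Lipschitz for $|\alpha|<k$ via the mean value theorem, modulus $\omega$ for $|\alpha|=k$). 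One small inaccuracy: for $|\alpha|<k$ the bound should be $|D^\alpha f(x)-D^\alpha f(x')|\le C_{n,k}\,\|x-x'\|\,\|f\|_{C_b^{k,\omega}(\RR^n)}$ with a dimensional constant, not literally $\|x-x'\|$, but this does not affect the continuity argument. You also implicitly use that $\omega$ is continuous on $(0,\infty)$; this does hold (it follows from the two monotonicity conditions on $\omega$ and $t/\omega(t)$), but it is worth noting since it is not stated among the hypotheses.
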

\begin{proof}
Let $\mu\in \bigl(C_0(M_{n,k})\bigr)^*$ be a countably additive regular Borel measures on $M_{n,k}$.   We set, for all admissible $\alpha$ and all Borel measurable sets $U\subset M_{n,k}$,
\[
\mu_\alpha^1(U)=\mu\bigl(U\cap (\{\alpha\}\times\RR^n)\bigr)\quad {\rm and}\quad
\mu_\alpha^2(U)=\mu\bigl(U\cap \bigl(\{\alpha\}\times \bigl((\RR^n\times\RR^n)\setminus\Delta_n\bigr)\bigr)\bigr).
\] 
Then $\mu=\sum_{\alpha,j}\mu_\alpha^j$.

Let us show that each $\mathcal I_0^*(\mu_\alpha^j)$ belongs to $i^*(G_b^{k,\omega}(\RR^n))$. Indeed, for
$j=1$ consider the Bochner integral
\begin{equation}\label{eq4.27}
J(\mu_\alpha^1):=\int_{x\in\RR^n}i^*(\phi_\alpha(x))\, d\mu_\alpha^1(x)=i^*\left(\int_{x\in\RR^n}\phi_\alpha(x)\, d\mu_\alpha^1(x)\right).
\end{equation}
Since $\phi_\alpha$ is continuous and bounded, the above integral is well-defined and its value is an element of $i^*(G_b^{k,\omega}(\RR^n))$. By the definition of the Bochner integral, for each $f\in C_0^{k,\omega}(\RR^n)$,
\[
(J(\mu_\alpha^1))(f)=\int_{x\in\RR^n}\bigl(i^*(\phi_\alpha(x))\bigr)(f)\,d\mu_\alpha^1(x)=\int_{x\in\RR^n} D^\alpha f(x)\,d\mu_\alpha^1(x)=:\bigl(\mathcal I_0^*(\mu_\alpha^1)\bigr)(f).
\]
Hence,
\[
\mathcal I_0^*(\mu_\alpha^1)=J(\mu_\alpha^1)\in i^*(G_b^{k,\omega}(\RR^n)).
\]
Similarly, for $\alpha\in\Lambda_{n,k}\setminus\Lambda_{n,k-1}$ we define
\begin{equation}\label{eq4.28}
J(\mu_\alpha^2):=\int_{z=(x,y)\in (\RR^n\times\RR^n)\setminus\Delta_n}\frac{i^*(\phi_\alpha(x))-i^*(\phi_\alpha(y))}{\omega(\|x-y\|)} d\mu_\alpha^2(z).
\end{equation}
Then, as before, we obtain that 
\[
\mathcal I_0^*(\mu_\alpha^2)=J(\mu_\alpha^2)\in i^*(G_b^{k,\omega}(\RR^n)).
\]

Thus we have established that the range of $\mathcal I_0^*$ is a subset of $i^*(G_b^{k,\omega}(\RR^n))$. Since the map $\mathcal I_0^*$ is surjective and its range contains all $\tilde\delta_x^\alpha$, it must contain $i^*(G_b^{k,\omega}(\RR^n))$ as well.

This completes the proof of the proposition.
\end{proof}

In particular, we obtain that $\bigl(C_0^{k,\omega}(\RR^n)\bigr)^*=i^*(G_b^{k,\omega}(\RR^n))$, i.e., by the inverse mapping theorem $i^*$ restricted to $G_b^{k,\omega}$ maps it isomorphically onto $\bigl(C_0^{k,\omega}(\RR^n)\bigr)^*$. 

Let us show that if $\lim_{t\rightarrow\infty}\omega(t)=\infty$, then $i^*$ is an isometry.
Assume, on the contrary, that for some  $v\in G_b^{k,\omega}(\RR^n)$, 
\begin{equation}\label{eq4.31}
\|i^*(v)\|_{(C_0^{k,\omega}(\RR^n))^*}<\|v\|_{G_b^{k,\omega}(\RR^n)}.
\end{equation}
Let $f\in C_0^{k,\omega}(\RR^n)$, $\|f\|_{C_b^{k,\omega}(\RR^n)}=1$, be such that
\[
v(f)=\|v\|_{G_b^{k,\omega}(\RR^n)}.
\]
Let $\{f_N\}_{N\in\N}\subset C_0^{k,\omega}(\RR^n)$, 
$\|f_N\|_{C_b^{k,\omega}(\RR^n)}\le C_N^2$, $N\in\N$, be the sequence of Proposition \ref{prop4.2} weak$^*$ converging to $f$. Observe that $\lim_{N\rightarrow\infty} C_N=1$ due to the above condition for $\omega$. Then from \eqref{eq4.31} and \eqref{e4.28} we obtain
\[
\begin{array}{l}
\displaystyle
\|v\|_{G_b^{k,\omega}(\RR^n)}=v(f)=\lim_{N\rightarrow\infty} v(f_N)=\lim_{N\rightarrow\infty} \bigl(i^*(v)\bigr)(f_N)\medskip\\
\displaystyle
\le \|i^*(v)\|_{(C_0^{k,\omega}(\RR^n))^*}\cdot\varlimsup_{N\rightarrow\infty} \|f_N\|_{C_b^{k,\omega}(\RR^n)}\le \|i^*(v)\|_{(C_0^{k,\omega}(\RR^n))^*}<\|v\|_{G_b^{k,\omega}(\RR^n)},
\end{array}
\]
a contradiction proving that $i^*$ is an isometry.

The proof of Theorem \ref{te1.4}\,(1) is complete.
\end{proof}
\subsection{Proof of Theorem \ref{te1.4}\,(2)}
\begin{proof}
By the hypotheses of the theorem there exists a weak$^*$ continuous operator $T\in Ext(C_b^{k,\omega}(S);C_b^{k,\omega}(\RR^n))$ such that $T(C_0^{k,\omega}(S))\subset C_0^{k,\omega}(\RR^n)$. This implies that there is a bounded linear projection of the geometric preduals of the corresponding spaces
$P: G_b^{k,\omega}(\RR^n)\rightarrow G_b^{k,\omega}(S)$  such that $P^*=T$.
Let $q_S: C_b^{k,\omega}(\RR^n)\rightarrow C_b^{k,\omega}(S)$ and $q_{S0}: C_0^{k,\omega}(\RR^n)\rightarrow C_0^{k,\omega}(S)$ denote the quotient maps induced by restrictions of functions on $\RR^n$ to $S$.  Finally, let $i: C_0^{k,\omega}(\RR^n)\rightarrow C_b^{k,\omega}(\RR^n)$ and
$i_S: C_0^{k,\omega}(S)\rightarrow C_b^{k,\omega}(S)$ be the bounded linear maps corresponding to inclusions of the spaces. Note that $i$ is an isometric embedding and $i_S$ is injective of norm $\le 1$.
\begin{Lm}\label{lem5.5}
$T_0:=T|_{C_0^{k,\omega}(S)}:C_0^{k,\omega}(S)\rightarrow C_0^{k,\omega}(\RR^n)$ is a bounded linear map between Banach spaces. 
\end{Lm}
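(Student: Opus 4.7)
My plan is to observe that this lemma is essentially a bookkeeping exercise with the trace norms and the inclusions already set up in the proof. The map $T_0$ is well-defined as a map into $C_0^{k,\omega}(\RR^n)$ directly by the hypothesis $T(C_0^{k,\omega}(S))\subset C_0^{k,\omega}(\RR^n)$, and linearity is automatic from the linearity of $T$. The only content is boundedness with respect to the \emph{trace} norms on $C_0^{k,\omega}(S)$ and $C_0^{k,\omega}(\RR^n)$, which is a priori different from boundedness as an operator between the $C_b^{k,\omega}$ spaces where $T$ naturally acts.

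For the norm estimate, I would exploit the two inclusion maps already introduced. The inclusion $i:C_0^{k,\omega}(\RR^n)\hookrightarrow C_b^{k,\omega}(\RR^n)$ is \emph{isometric} (the $C_0^{k,\omega}(\RR^n)$ norm is the restriction of the $C_b^{k,\omega}(\RR^n)$ norm), so the $C_0^{k,\omega}(\RR^n)$-norm of $T_0f$ coincides with its $C_b^{k,\omega}(\RR^n)$-norm. On the domain side, $i_S:C_0^{k,\omega}(S)\to C_b^{k,\omega}(S)$ has norm $\le 1$, since the infimum defining $\|f\|_{C_b^{k,\omega}(S)}$ is taken over a larger set of admissible extensions than the one defining $\|f\|_{C_0^{k,\omega}(S)}$. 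Chaining these together I obtain, for $f\in C_0^{k,\omega}(S)$,
\[
\|T_0f\|_{C_0^{k,\omega}(\RR^n)}=\|T(i_Sf)\|_{C_b^{k,\omega}(\RR^n)}\le \|T\|\cdot\|i_Sf\|_{C_b^{k,\omega}(S)}\le \|T\|\cdot\|f\|_{C_0^{k,\omega}(S)},
\]
so $T_0$ is bounded with $\|T_0\|\le\|T\|$.

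I do not expect any obstacle here; the only point to be careful about is not to confuse the two ambient norms on $C_0^{k,\omega}(S)$ (the intrinsic trace norm coming from $C_0^{k,\omega}(\RR^n)$ versus the norm inherited from $C_b^{k,\omega}(S)$), and to make explicit use of the fact that $i$ is isometric while $i_S$ is only a contraction. The lemma will then be available as a clean ingredient for subsequent weak$^*$ considerations in the proof of Theorem \ref{te1.4}\,(2).
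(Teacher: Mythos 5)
Your proof is correct and coincides essentially verbatim with the paper's: both chain the identity $T_0 f = (T\circ i_S)(f)$ with the isometry of $i$ and the contractivity of $i_S$ to obtain $\|T_0\|\le\|T\|$. The explanatory remarks about the two trace norms match the paper's setup preceding the lemma, so there is nothing to add.
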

\begin{proof}
For $f\in C_0^{k,\omega}(S)$ we have
\[
\|T_0f\|_{C_0^{k,\omega}(\RR^n)}=\|(T\circ i_S)(f)\|_{C_b^{k,\omega}(\RR^n)}\le\|T\|\cdot\|i_S(f)\|_{C_b^{k,\omega}(S)}\le \|T\|\cdot\|f\|_{C_0^{k,\omega}(S)},
\]
as required.
\end{proof}

Now, we have the following two commutative diagrams of adjoints of the above bounded linear maps (one corresponding to upward arrows and another one to downward arrows):
\begin{equation}\label{equ5.38}
\begin{array}{cccc}
 \bigl(C_b^{k,\omega}(\RR^n)\bigr)^*&\stackrel{i^*}{\longrightarrow}&\bigl(C_0^{k,\omega}(\RR^n)\bigr)^*\smallskip\\
 _{\mbox{{\tiny $q_S^*$}}}\!\uparrow\ \ \ \downarrow\mbox{{\tiny $T^*$}}&&_{\mbox{{\tiny $q_{S0}^*$}}}\!\uparrow\ \ \ \downarrow\mbox{{\tiny $T_0^*$}}
 \\
\bigl(C_b^{k,\omega}(S)\bigr)^*&\stackrel{i_S^*}{\longrightarrow}&\bigl(C_0^{k,\omega}(S)\bigr)^*.
\end{array}
\end{equation}
Here $T^*\circ q_S^*=(q_S\circ T)^*={\rm id}$ and $T_0^*\circ q_{S0}^*=(q_{S0}\circ T_0)^*={\rm id}$, maps $q_S^*$ and $q_{S0}^*$ are isometric embeddings and
map $i^*$ is surjective. 

Note that $i^*|_{G_b^{k,\omega}(\RR^n)}:G_b^{k,\omega}(\RR^n)\rightarrow \bigl(C_0^{k,\omega}(\RR^n)\bigr)^*$ is an isomorphism by the first part of the theorem. Also, by the definition of $P$ (see \eqref{proj} in section~3.3 above),
\begin{equation}\label{equ5.39}
q_S^*\circ (T^*|_{G_b^{k,\omega}(\RR^n)})=P.
\end{equation}
Let us show that the map
\[
I:=i_S^*\circ(T^*|_{G_b^{k,\omega}(S)}):G_b^{k,\omega}(S)\rightarrow \bigl(C_0^{k,\omega}(S)\bigr)^*
\]
is an isomorphism.\smallskip

(a) Injectivity of $I$: If $I(v)=0$ for some $v\in G_b^{k,\omega}(S)$, then by the commutativity of \eqref{equ5.38} and by \eqref{equ5.39},
\[
0=(q_{S0}^*\circ i_S^*)(T^*v)=(i^*\circ q_S^*)(T^*v)=i^*(Pv)=i^*(v).
\]
Since $i^*$ is injective, the latter implies that $v=0$, i.e., $I$ is an injection.\medskip

(b) Surjectivity of $I$: Let $v\in \bigl(C_0^{k,\omega}(S)\bigr)^*$. Since $T_0^*$ is surjective, there exists $v_1\in \bigl(C_0^{k,\omega}(\RR^n)\bigr)^*$ such that $T_0^*(v_1)=v$. Further, since
$i^*|_{G_b^{k,\omega}(\RR^n)}:G_b^{k,\omega}(\RR^n)\rightarrow \bigl(C_0^{k,\omega}(\RR^n)\bigr)^*$ is an isomorphism, there exists $v_2\in G_b^{k,\omega}(\RR^n)$ such that $i^*(v_2)=v_1$.
Now, by the commutativity of \eqref{equ5.38},
\[
v=(T_0^*\circ i^*)(v_2)=(i_S^*\circ T^*)(v_2)=(i_S^*\circ (T^*\circ q_S^*)\circ T^*)(v_2)=(i_S^*\circ T^*)(Pv_2)=I(Pv_2),
\]
i.e., $I$ is a surjection.

So $I$ is a bijection and therefore by the inverse mapping theorem it is an isomorphism.

This completes the proof of the second part of Theorem \ref{te1.4}.
\end{proof}
\subsection{Proof of Corollary \ref{cor1.10}}
\begin{proof}
Let $X\subset C_0^{k,\omega}(\RR^n)$ be the closure of the space of $C^\infty$ functions with compact supports on $\RR^n$. Assume, on the contrary, that there exists $f\in C_0^{k,\omega}(\RR^n)\setminus X$. Then there exists a functional $\lambda\in \bigl(C_0^{k,\omega}(\RR^n)\bigr)^*$ such that
$\lambda|_X=0$ and $\lambda(f)=1$. 

Let $i^*: \bigl(C_b^{k,\omega}(\RR^n)\bigr)^*\rightarrow \bigl(C_0^{k,\omega}(\RR^n)\bigr)^*$ be the adjoint of the isometrical embedding $i:C_0^{k,\omega}(\RR^n)\hookrightarrow C_b^{k,\omega}(\RR^n)$. According to the arguments of the proof of Theorem \ref{te1.4}, 
$i^*|_{G_b^{k,\omega}(\RR^n)}:G_b^{k,\omega}(\RR^n)\rightarrow \bigl(C_0^{k,\omega}(\RR^n)\bigr)^*$ is an isomorphism. Hence, for $\tilde\lambda:=(i^*|_{G_b^{k,\omega}(\RR^n)})^{-1}(\lambda)$ we have $g(\tilde\lambda)=0$ for all $g\in X$ and $f(\tilde\lambda)=1$. Observe that $X$ is weak$^*$ dense in $C_0^{k,\omega}(\RR^n)$ (see the proof of Proposition \ref{prop4.2}). Thus $X$ separates the points of $G_b^{k,\omega}(\RR^n)$.
Since $g(\tilde\lambda)=0$ for all $g\in X$, the latter implies that $\tilde\lambda=0$, a contradiction with $f(\tilde\lambda)=1$. 

This shows that $X=C_0^{k,\omega}(\RR^n)$. 

Clearly, $X$ is separable (it contains, e.g., the dense countable set of functions of the form $\rho_N\cdot p$, $N\in\N$, where $p$ are polynomials with rational coefficients and $\{\rho_N\}_{N\in\N}$ is a fixed sequence of $C^\infty$ cut-off functions weak$^*$ converging in $C_b^{k,\omega}(\RR^n)$ to the constant function $f= 1$).

This  completes the proof of the corollary.
\end{proof}
\section{Proof of Theorem \ref{te1.11}}
\subsection{Proof of Theorem \ref{te1.11} for Weak $k$-Markov Sets}
First, we recall some results proved in \cite{BB1,BB2, B}.

(1) If $S\in {\rm Mar}_k^*(\RR^n)$, then a function $f\in C_b^{k,\omega}(S)$ has derivatives of order $\le k$ at each weak $k$-Markov point $x\in S$, i.e., there exists a (unique) polynomial $T_x^k(f)\in\cP_{k,n}$ such that
\[
\lim_{y\to x}\frac{|f(y)-T_x^k(f)(y)|}{\|y-x\|^k}=0.
\]

If $T_x^k(f)(z):=\sum_{|\alpha|\le k} \frac{c_\alpha}{\alpha !} (z-x)^\alpha$, $\alpha\in\Z_+^n$, then $c_\alpha$ is called the partial derivative of order $|\alpha|$ at $x$ and is denoted as $D_S^\alpha f (x)$.\medskip

(2)  If $\tilde f\in C_b^{k,\omega}(\RR^n)$ is such that $\tilde f|_S=f$, then the Taylor polynomial $T_x^k(\tilde f)$ of order $k$ of $\tilde f$ at $x$ coincides with $T_x^k(f)$.\medskip

(3) The analog of the the classical Whitney-Glaeser theorem holds:

A function $f\in C(S)$ belongs to $C_b^{k,\omega}(S)$  if and only if it has derivatives of order $\le k$ at each weak $k$-Markov point $x\in S$ and  there exists a constant $\lambda>0$ such that for all weak $k$-Markov points $x,y\in S$, $z\in\{x,y\}$ 
\begin{equation}\label{equ6.40}
\begin{array}{l}
\displaystyle
\max_{|\alpha|\le k}|D^\alpha_S f(x)|\le \lambda\quad\text{and}\\
\\ \displaystyle \max_{|\alpha|\le k}\frac{|D_S^\alpha \bigl(T_x^k(f)-T_y^k(f)\bigr)(z)|}{\|x-y\|^{k-|\alpha|}}\le\lambda\cdot\omega(\|x-y\|).
\end{array}
\end{equation} 
Moreover, 
\[
\|f\|_{C_b^{k,\omega}(S)}\approx\inf\lambda
\]
with constants of equivalence depending only on $k$ and $n$.\medskip

(4)
There exists a bounded linear extension operator $T: C_b^{k,\omega}(S)\to C_b^{k,\omega}(\RR^n)$ of finite depths 
\[
(Tf)(x):=
\left\{
\begin{array}{ccc}
\displaystyle \sum_{i=1}^\infty \lambda_i(x)f(x_i)&{\rm if}&x\in\RR^n\setminus S
\\
f(x)&{\rm if}& x\in S,
\end{array}
\right.
\]
where all $\lambda_i\in C^\infty(\RR^n)$ and have compact supports in $\RR^n\setminus S$, all $x_i\in S$ and for each $x\in\RR^n$ the number of nonzero terms in the above sum is at most ${n+k\choose n}\cdot w$, where $w$ is the order of the Whitney cover of $\RR^n\setminus S$.

The construction of $T$ repeats that of the Whitney-Glaeser extension operator \cite{Gl}, where instead of jets $T_x^k(f)$ of $f\in C_b^{k,\omega}(S)$ at  weak $k$-Markov points $x\in S$ (forming a dense subset of $S$) one uses polynomials of degree $k$ interpolating $f$ on certain subsets of cardinality ${n+k\choose n}$ close to $x$. In particular, as in the case of the Whitney-Glaeser extension operator, we obtain that $T\in Ext(C_b^{k,\omega}(S);C_b^{k,\omega}(\RR^n))$ for all moduli of continuity $\omega$. Also, by the construction, if  $f\in C_b^{k,\omega}(S)$ is the restriction to $S$ of a $C^\infty$ function with compact support on $\RR^n$, then $Tf\in C_b^{k,\omega}(\RR^n)$ has compact supports in all closed $\delta$-neihbourhoods of $S$ (i.e., sets $[S]_\delta:=\{x\in\RR^n\, :\, \inf_{y\in S}\|x-y\|\le\delta\}$, $\delta>0$).
\begin{proof}[Proof of Theorem \ref{te1.11} for $S\in {\rm Mar}_k^*(\RR^n)$]
Let $\rho\in C^\infty(\RR^n)$, $0\le \rho\le 1$, be such that $\rho|_{[S]_{1}}=1$, $\rho|_{\RR^n\setminus [S]_{3}}=0$
and for some $C_{k,n}\in\RR_+$ (depending on $k$ and $n$ only)
\begin{equation}\label{rho1}
\sup_{x\in\mathbb R^n}|D^\alpha\rho(x)|\le C_{k,n}\quad {\rm for\ all}\quad \alpha\in\mathbb Z_+^n.
\end{equation}
(E.g., such $\rho$ can be obtained by the convolution of the indicator function of $[S]_2$ with a fixed radial $C^\infty$ function with support in the unit Euclidean ball of $\RR^n$ and with $L_1(\RR^n)$ norm one.) We define a new extension operator by the formula
\begin{equation}\label{eq6.42}
\widetilde Tf=\rho\cdot Tf,\qquad f\in C_b^{k,\omega}(S).
\end{equation}
\begin{Lm}\label{lem6.1}
Operator $\widetilde T\in Ext(C_b^{k,\omega}(S);C_b^{k,\omega}(\RR^n))$ for all moduli of continuity $\omega$.
\end{Lm}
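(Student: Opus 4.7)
My plan is to verify the two defining properties of an element of $Ext(C_b^{k,\omega}(S);C_b^{k,\omega}(\RR^n))$: that $\widetilde T$ is a bounded linear map and that it is a right inverse of $q_S$. Linearity is inherited from $T$ and pointwise multiplication by $\rho$, and the right-inverse property is immediate from the construction, since $\rho\equiv 1$ on $[S]_1\supset S$ and $T$ is itself an extension, so $(\widetilde Tf)|_S=\rho|_S\cdot (Tf)|_S=f$.

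The substantive content of the lemma is therefore the bound $\|\widetilde Tf\|_{C_b^{k,\omega}(\RR^n)}\le C\|f\|_{C_b^{k,\omega}(S)}$. I would obtain it by applying the general Leibniz rule to $D^\alpha(\rho\cdot Tf)$ for $|\alpha|\le k$ and invoking the uniform bound \eqref{rho1} on all derivatives of $\rho$ together with $\|Tf\|_{C_b^{k,\omega}(\RR^n)}\le\|T\|\cdot\|f\|_{C_b^{k,\omega}(S)}$. The $C_b^k$-part of the norm is controlled immediately this way.

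For the seminorm $|\widetilde Tf|_{C_b^{k,\omega}(\RR^n)}$ involving differences of top-order derivatives, the Leibniz summand with $\nu=0$ gives the clean bound $\|\rho\|_\infty\cdot|Tf|_{C_b^{k,\omega}}\cdot\omega(\|x-y\|)$, while each summand with $|\nu|\ge 1$ is handled by the splitting $D^\nu\rho(x)D^{\alpha-\nu}Tf(x)-D^\nu\rho(y)D^{\alpha-\nu}Tf(y)=D^\nu\rho(x)\bigl(D^{\alpha-\nu}Tf(x)-D^{\alpha-\nu}Tf(y)\bigr)+\bigl(D^\nu\rho(x)-D^\nu\rho(y)\bigr)D^{\alpha-\nu}Tf(y)$. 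Since $|\nu|\ge 1$ forces $|\alpha-\nu|\le k-1$, both $D^\nu\rho$ and $D^{\alpha-\nu}Tf$ are uniformly bounded and their gradients are controlled by $C_{k,n}$ and by $\|Tf\|_{C_b^k(\RR^n)}$ respectively, so each difference above is bounded by a constant multiple of $\|x-y\|$ via the mean value theorem.

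The main (and only) obstacle is converting these residual $\|x-y\|$ estimates into the required $\omega(\|x-y\|)$ estimates, since in general $\|x-y\|$ is not comparable to $\omega(\|x-y\|)$. I would handle this by the standard two-regime dichotomy enforced by conditions (i) and (ii) on $\omega$: for $\|x-y\|\le 1$, monotonicity of $t/\omega(t)$ gives $\|x-y\|\le \omega(\|x-y\|)/\omega(1)$; for $\|x-y\|>1$, monotonicity of $\omega$ gives $\omega(\|x-y\|)\ge\omega(1)$, so the difference can be bounded trivially by twice the sup norm of $D^\alpha\widetilde Tf$, again absorbing a factor $1/\omega(1)$. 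Collecting constants yields a bound $\|\widetilde T\|\le C(k,n)\cdot\|T\|/\omega(1)$ which, although $\omega$-dependent, is finite for every admissible $\omega$ and suffices to establish $\widetilde T\in Ext(C_b^{k,\omega}(S);C_b^{k,\omega}(\RR^n))$.
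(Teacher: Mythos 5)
Your proof is correct and follows essentially the same route as the paper's: the paper packages the dichotomy on $\|x-y\|$ into the auxiliary equivalent norm $\|\cdot\|'_{C_b^{k,\omega}(\RR^n)}$ (taking the oscillation seminorm only over $\|x-y\|\le 1$) and then invokes the Leibniz-plus-splitting computation already carried out in Lemma \ref{norm} with $\ell=1$, whereas you perform that same Leibniz/splitting estimate directly and carry out the two-regime argument ($\|x-y\|\le 1$ via monotonicity of $t/\omega(t)$, $\|x-y\|>1$ via the trivial bound and monotonicity of $\omega$) explicitly. The substance — converting the residual Lipschitz terms into $\omega$ terms at the cost of an $\omega$-dependent constant — is identical, so the two arguments differ only in presentation.
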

\begin{proof}
We equip $C_b^{k,\omega}(\RR^n)$ with equivalent norm \[
\|f\|_{C_b^{k,\omega}(\RR^n)}':=\max\left\{\|f\|_{C_b^{k,\omega}(\RR^n)},|f|_{C_b^{k,\omega}(\RR^n)}'\right\},
\]
where $|f|_{C_b^{k,\omega}(\RR^n)}'$ is defined similarly to $|f|_{C_b^{k,\omega}(\RR^n)}$ but with the supremum taken over all $x\ne y$ such that $\|x-y\|\le 1$, see \eqref{eq3}--\eqref{eq5}. (Note that the constants of equivalence between these two norms depend on $\omega$.)
Now, using word-by-word the arguments of Lemma \ref{norm} with $\rho_\ell$ replaced by $\rho$, $\ell$  replaced by $1$, and $c_{k,n}$ replaced by $C_{k,n}$ we obtain for some constant $C=C(k,n,\omega)$ and all $h\in C_b^{k,\omega}(\RR^n)$,
\begin{equation}\label{eq6.43}
\|\rho\cdot h\|_{C_b^{k,\omega}(\RR^n)}'\le C\cdot\|h\|_{C_b^{k,\omega}(\RR^n)}'.
\end{equation}
Since $T\in Ext(C_b^{k,\omega}(S);C_b^{k,\omega}(\RR^n))$ for all moduli of continuity $\omega$, inequality \eqref{eq6.43} implies the required statement.
\end{proof}

Clearly, $\widetilde T$ is of finite depth. Moreover, if   $f\in C_b^{k,\omega}(S)$ is the restriction to $S$ of a $C^\infty$ function with compact support on $\RR^n$, then $\widetilde Tf\in C_b^{k,\omega}(\RR^n)$ and has compact support on $\RR^n$ due to the properties of operator $T$, (see part (4) above). Finally, since the set of all $C_b^{k,1}(S)$ functions (i.e., for this space $\omega(t):=t$, $t\in\RR_+$) with compact supports on $S$ is dense in $C_0^{k,\omega}(S)$ (because $\omega$ satisfies condition \eqref{omega2}, see Corollary \ref{cor1.10}), the preceding property of $\widetilde T$ and Lemma \ref{lem6.1} imply that $\widetilde T(C_0^{k,\omega}(S))\subset C_0^{k,\omega}(\RR^n)$. Therefore $\widetilde T$ satisfies the hypotheses of Theorem \ref{te1.4}\,(2) (weak$^*$ continuity of $\widetilde T$ follows from Theorem \ref{teo1.6}). This implies the required statement:
$\bigl(C^{k,\omega}_0(S)\bigr)^*$ is isomorphic to $G_b^{k,\omega}(S)$ for all $\omega$ satisfying \eqref{omega2} and all weak $k$-Markov sets $S$.

Now, $G_b^{k,\omega}(S)$ has the metric approximation property due to the Grothendieck result \cite[Ch.\,I]{G} (formulated before Remark
\ref{k} of section~1.4 above) because this space has the approximation property by Theorem \ref{te1.3}. Also, $C_0^{k,\omega}(S)$ has the metric approximation property because its dual has it, see, e.g.,  \cite[Th.\,3.10]{C}.

The proof of the theorem for $S\in {\rm Mar}_k^*(\RR^n)$ is complete.
\end{proof}
\subsection{Proof of Theorem \ref{te1.11} in the General Case}
\begin{proof}
We require some auxiliary results.

Let $\widetilde\omega$ be the modulus of continuity satisfying
\begin{equation}\label{eq6.46}
\varlimsup_{t\rightarrow 0^+}\frac{\omega_o(t)}{\widetilde\omega(t)}<\infty.
\end{equation}
\begin{Lm}\label{lem6.2}
The restriction of the pullback map $H^*:C_b^k(\RR^n)\rightarrow C_b^k(\RR^n)$, $H^*f:=f\circ H$,  to $C_b^{k,\tilde\omega}(\RR^n)$ belongs to $\mathcal L(C_b^{k,\tilde\omega}(\RR^n);C_b^{k,\tilde\omega}(\RR^n))$.
\end{Lm}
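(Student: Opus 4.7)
The plan is to apply Faà di Bruno's formula to expand each $D^{\alpha}(f\circ H)$ with $|\alpha|\le k$ as a finite linear combination of terms of the form $(D^{\beta}f)\bigl(H(x)\bigr)\cdot\prod_{i} D^{\gamma_i}H_{j_i}(x)$, where $1\le |\beta|\le |\alpha|$ (with $\beta=0$ permitted when $\alpha=0$), $\sum_i|\gamma_i|=|\alpha|$, and each $|\gamma_i|\ge 1$. Because the entries of the Jacobian of $H$ lie in $C_b^{k-1,\omega_o}(\RR^n)$, every factor $D^{\gamma}H_j$ with $1\le|\gamma|\le k$ is bounded, and every such factor with $|\gamma|\le k-1$ is in fact Lipschitz. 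Combined with $\|D^{\beta}f\|_{\infty}\le \|f\|_{C_b^{k,\tilde\omega}}$ for $|\beta|\le k$, this immediately yields $\|f\circ H\|_{C_b^{k}(\RR^n)}\le C_{k,n,H}\,\|f\|_{C_b^{k,\tilde\omega}(\RR^n)}$.

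The substantive work is showing the $\tilde\omega$-modulus bound on $D^{\alpha}(f\circ H)$ for $|\alpha|=k$. Fix such an $\alpha$ and $x\ne y$, and estimate each Faà di Bruno summand via the standard splitting
\[
\bigl|(D^{\beta}f)(H(x))-(D^{\beta}f)(H(y))\bigr|\cdot|P(x)|\;+\;\bigl|(D^{\beta}f)(H(y))\bigr|\cdot|P(x)-P(y)|,
\]
with $P(x):=\prod_i D^{\gamma_i}H_{j_i}(x)$. In the leading case $|\beta|=k$, every $|\gamma_i|=1$ and $P$ is a product of bounded Lipschitz (or $\omega_o$-continuous when $k=1$) factors; the first summand is handled by $|f|_{C_b^{k,\tilde\omega}}\,\tilde\omega(\|H(x)-H(y)\|)\le C\,\tilde\omega(\|x-y\|)$, invoking $\|H(x)-H(y)\|\le L\|x-y\|$ with $L=\sup\|DH\|$ together with $\tilde\omega(Lt)\le\max(L,1)\tilde\omega(t)$ (which follows from the monotonicity of $t/\tilde\omega(t)$); the second summand is handled by the product rule, yielding a modulus $\le C'\omega_o(\|x-y\|)$ which is $\le C''\tilde\omega(\|x-y\|)$ on a sufficiently small interval $(0,t_0]$ by \eqref{eq6.46}. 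In the lower-order case $|\beta|<k$, one of the factors $D^{\gamma_i}H_{j_i}$ has order $\ge 2$; if $|\gamma_i|=k$ it carries only the $\omega_o$-modulus, and otherwise it is Lipschitz, so the same splitting, combined with the observation that $D^{\beta}f$ for $|\beta|\le k-1$ is itself Lipschitz on bounded sets, produces a bound of the form $C\,\|f\|_{C_b^{k,\tilde\omega}}\,\omega_o(\|x-y\|)$, again dominated by $\tilde\omega(\|x-y\|)$ for $\|x-y\|\le t_0$.

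For the complementary regime $\|x-y\|>t_0$ I would simply use
\[
\bigl|D^{\alpha}(f\circ H)(x)-D^{\alpha}(f\circ H)(y)\bigr|\le 2\,\|D^{\alpha}(f\circ H)\|_{\infty}\le C_{k,n,H}\|f\|_{C_b^{k,\tilde\omega}},
\]
together with $\tilde\omega(\|x-y\|)\ge\tilde\omega(t_0)>0$, to bound the ratio uniformly. Combining the two regimes across all summands gives $|f\circ H|_{C_b^{k,\tilde\omega}(\RR^n)}\le C(H,k,n,\omega_o,\tilde\omega)\,\|f\|_{C_b^{k,\tilde\omega}(\RR^n)}$, and the lemma follows.

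The main technical obstacle I anticipate is the lower-order case $|\beta|<k$: there the factor $(D^{\beta}f)(H(x))$ is not controlled by the $\tilde\omega$-seminorm of $f$ at all, and one must extract all the modulus of continuity from $P(x)-P(y)$, where higher-order derivatives of $H$ appear and only an $\omega_o$-modulus is available. The dissipation from the full $\tilde\omega$-control of $f$ down to the weaker $\omega_o$-control is precisely compensated by the hypothesis \eqref{eq6.46}, so the argument pivots on correctly book-keeping which factor in each term provides the dominant modulus and then invoking $\omega_o\le C\tilde\omega$ near the origin.
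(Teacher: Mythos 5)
Your proof is correct and rests on the same key ingredient as the paper's: the Fa\`{a} di Bruno formula expressing $D^\alpha(f\circ H)$ as a polynomial combination of $(D^\lambda f)\circ H$ and $D^\beta h_j$, together with the observation that the hypothesis \eqref{eq6.46} lets the $\omega_o$-modulus of the top-order Jacobian derivatives be absorbed into the $\tilde\omega$-modulus. The difference is one of packaging rather than substance. You estimate the modulus of each Fa\`{a} di Bruno summand by hand, using the product-rule splitting and then treating the two regimes $\|x-y\|\le t_0$ and $\|x-y\|>t_0$ separately (and you correctly flag the inequality $\tilde\omega(Lt)\le\max(L,1)\tilde\omega(t)$, which follows from monotonicity of $t/\tilde\omega(t)$, to transfer $\tilde\omega(\|H(x)-H(y)\|)$ to $\tilde\omega(\|x-y\|)$). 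The paper instead observes once that $C_b^{0,\tilde\omega}(\RR^n)$ is a Banach algebra under pointwise multiplication, so it suffices to check that each building block $(D^\lambda f)\circ H$ and $D^\beta h_j$ individually lies in $C_b^{0,\tilde\omega}(\RR^n)$: for $|\beta|\le k-1$ or $|\lambda|\le k-1$ this follows from the Lipschitz embedding $C_b^{0,1}\hookrightarrow C_b^{0,\tilde\omega}$, and for $|\beta|=k$ or $|\lambda|=k$ it follows from $H$ being Lipschitz and, in the first case, from the embedding $C_b^{0,\omega_o}\hookrightarrow C_b^{0,\tilde\omega}$ guaranteed by \eqref{eq6.46}. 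That Banach-algebra reduction makes the regime splitting and the case bookkeeping you carry out automatic, so the paper's write-up is substantially shorter. One small imprecision in your write-up: you say $D^\beta f$ for $|\beta|\le k-1$ is ``Lipschitz on bounded sets,'' but in fact it is globally Lipschitz since $D^{\beta+e_i}f$ is uniformly bounded by $\|f\|_{C_b^{k,\tilde\omega}}$; the extra clause is harmless but unnecessary.
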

\begin{proof}
We set $H=(h_1,\dots, h_n)$. Then by the hypothesis (a) of the theorem all $D^i h_j\in C_b^{k-1,\omega_o}(\RR^n)$, where $\omega_o$ satisfies \eqref{equ1.8}.

Let $f\in C_b^{k,\tilde\omega}(\RR^n)$. Then for each $\alpha\in\Z_+^n$, $|\alpha|\le k$, by the Fa\`{a} di Bruno formula, see, e.g., \cite{CS}, we obtain
\begin{equation}\label{eq6.47}
(D^\alpha (f\circ H))(x)=\sum_{0<|\lambda|\le |\alpha|} D^\lambda f(H(x))\cdot P_\lambda\left(\bigl[D^\beta H(x)\bigr]_{0<|\beta|\le |\alpha|}\right);
\end{equation}
here $P_\lambda\left(\bigl[D^\beta H(x)\bigr]_{0<|\beta|\le |\alpha|}\right)$ are polynomials of degrees $\le |\alpha|$ without constant terms with  coefficients in $\mathbb Z_+$ bounded by a constant depending on $k$ and $n$ only in variables $D^\beta h_j$, $0<|\beta|\le |\alpha|$, $1\le j\le n$. Since clearly $C_b^{0,\tilde\omega}(\RR^n)$ is a Banach algebra with respect to the pointwise multiplication of functions, to prove the lemma it suffices to check that all $D^\lambda f(H(\cdot))$ and $D^\beta h_j$ belong to $C_b^{0,\tilde\omega}(\RR^n)$.  For $|\beta|=k$ this is true because $D^\beta h_j\in C_b^{0,\omega_o}(\RR^n)\subset C_b^{0,\widetilde\omega}(\RR^n)$ by the definition of $H$ and by condition \eqref{eq6.46}, while for $1\le |\beta|\le k-1$ because $D^\beta h_j\in C_b^{0,1}(\RR^n)$ which is continuously embedded into $C_b^{0,\widetilde\omega}(\RR^n)$. Similarly, for $D^\lambda f(H(\cdot))$ with $1\le |\alpha|\le k-1$  this is true because of the continuous embedding $C_b^{0,1}(\RR^n)\hookrightarrow C_b^{0,\widetilde\omega}(\RR^n)$ and because $H$ is Lipschitz, while  for $|\lambda|=k$ by the definition of $f$ and the fact that $H$ is Lipschitz.
\end{proof}
Using this lemma we prove the following result.
\begin{Lm}\label{lem6.3}
The operator $(H|_{S'})^*: C_b^{k,\widetilde\omega}(S)\rightarrow C_b^{k,\widetilde\omega}(S')$, $(H|_{S'})^*f:=f\circ H|_{S'}$, is well-defined and belongs to $\mathcal L(C_b^{k,\widetilde\omega}(S); C_b^{k,\widetilde\omega}(S'))$. Moreover, it is weak$^*$ continuous.\footnote{Here the weak$^*$ topologies are defined by means of functionals in $G_b^{k,\widetilde\omega}(\tilde S)$, where $\tilde S$ stands for $S'$ or $S$.}
\end{Lm}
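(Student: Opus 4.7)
The plan is to reduce everything to Lemma 6.2 by factoring $(H|_{S'})^*$ through an extension-and-restriction diagram. First, given $f\in C_b^{k,\widetilde\omega}(S)$, I would pick an extension $\tilde f\in C_b^{k,\widetilde\omega}(\RR^n)$ with $\tilde f|_S=f$. By Lemma 6.2, $\tilde f\circ H\in C_b^{k,\widetilde\omega}(\RR^n)$ with norm at most $\|H^*\|\cdot\|\tilde f\|_{C_b^{k,\widetilde\omega}(\RR^n)}$. Since $H|_{S'}:S'\to S$ is a retraction, $H(S')\subset S$, so the formula $x\mapsto f(H(x))$ on $S'$ is well-defined independently of the chosen extension and coincides with $(\tilde f\circ H)|_{S'}$. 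Hence $f\circ H|_{S'}\in C_b^{k,\widetilde\omega}(S')$, and passing to the infimum over extensions $\tilde f$ yields
\[
\|(H|_{S'})^*f\|_{C_b^{k,\widetilde\omega}(S')}\le \|H^*\|\cdot\|f\|_{C_b^{k,\widetilde\omega}(S)},
\]
proving the first two claims with $\|(H|_{S'})^*\|\le \|H^*\|$.

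For weak$^*$ continuity, I would exhibit $(H|_{S'})^*$ as the adjoint of an explicit bounded operator between the geometric preduals. Under the identification $\bigl(G_b^{k,\widetilde\omega}(\tilde S)\bigr)^*\cong C_b^{k,\widetilde\omega}(\tilde S)$ from Theorem \ref{te1.2} (for $\tilde S\in\{S,S'\}$), a bounded linear operator between these dual spaces is weak$^*$ continuous if and only if it is an adjoint, so it suffices to construct $P\in\mathcal L\bigl(G_b^{k,\widetilde\omega}(S');G_b^{k,\widetilde\omega}(S)\bigr)$ with $P^*=(H|_{S'})^*$. The natural candidate is determined on the dense subspace $D:=\mathrm{span}\{\delta_x^0:x\in S'\}\subset G_b^{k,\widetilde\omega}(S')$ by $P(\delta_x^0):=\delta_{H(x)}^0$ (well-defined since $H(x)\in S$), extended linearly.

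The boundedness of $P$ on $D$ is forced by the already-proved boundedness of $(H|_{S'})^*$: for $v=\sum_j c_j\delta_{x_j}^0\in D$ and $g\in C_b^{k,\widetilde\omega}(S)$ with $\|g\|_{C_b^{k,\widetilde\omega}(S)}\le 1$,
\[
|g(Pv)|=\Bigl|\sum_j c_j g(H(x_j))\Bigr|=\bigl|\bigl((H|_{S'})^*g\bigr)(v)\bigr|\le \|(H|_{S'})^*\|\cdot\|v\|_{G_b^{k,\widetilde\omega}(S')},
\]
whence $\|Pv\|_{G_b^{k,\widetilde\omega}(S)}\le \|(H|_{S'})^*\|\cdot\|v\|_{G_b^{k,\widetilde\omega}(S')}$. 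Since $D$ is dense in $G_b^{k,\widetilde\omega}(S')$ by definition, $P$ extends by continuity to the whole space. The identity $P^*=(H|_{S'})^*$ will then be verified by computing $(P^*f)(\delta_x^0)=f(\delta_{H(x)}^0)=f(H(x))=((H|_{S'})^*f)(\delta_x^0)$ on the total family $\{\delta_x^0:x\in S'\}$ and invoking continuity of both functionals $P^*f$ and $(H|_{S'})^*f$ on $G_b^{k,\widetilde\omega}(S')$.

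The hard part is already absorbed into Lemma 6.2; the remainder is essentially bookkeeping between the three spaces $\RR^n$, $S'$, and $S$ and their geometric preduals, together with the standard construction of the predual operator by continuity on a dense subspace. I do not anticipate genuine technical difficulty beyond keeping the retraction property $H(S')\subset S$ straight, which is what makes the extension procedure independent of the chosen $\tilde f$.
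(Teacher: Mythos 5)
Your proof of well-definedness and boundedness matches the paper's: both pick an extension $\tilde f$ of $f$ to $\RR^n$, invoke Lemma \ref{lem6.2} to get $\tilde f\circ H\in C_b^{k,\widetilde\omega}(\RR^n)$, restrict to $S'$, and use the retraction property to see the answer is independent of the extension. The weak$^*$ continuity argument, however, is genuinely different. The paper factors $(H|_{S'})^*=q_{S'}\circ H^*\circ T$, where $T$ is the finite-depth extension operator and $q_{S'}$ is the quotient map, and then observes that $T$ is weak$^*$ continuous by Theorem \ref{teo1.6}, $q_{S'}$ is the adjoint of the embedding $G_b^{k,\widetilde\omega}(S')\hookrightarrow G_b^{k,\widetilde\omega}(\RR^n)$, and $H^*$ on $C_b^{k,\widetilde\omega}(\RR^n)$ is weak$^*$ continuous by a separate argument (Proposition \ref{prop3.1} plus Fa\`a di Bruno). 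You instead construct the pre-adjoint $P$ directly on the dense span of $\delta$-functionals, $P(\delta_x^0):=\delta_{H(x)}^0$, derive its boundedness from the already-proved operator bound on $(H|_{S'})^*$ via duality against $C_b^{k,\widetilde\omega}(S)=(G_b^{k,\widetilde\omega}(S))^*$, extend by continuity, and check $P^*=(H|_{S'})^*$ on the total family $\{\delta_x^0:x\in S'\}$. Your route is more self-contained for this lemma: it bypasses the weak$^*$ continuity of $T$ and of $H^*$ on the ambient space, needing Lemma \ref{lem6.2} only for the norm bound. The paper's factorization is more modular and reuses machinery it has already set up (and $q_{S'}\circ H^*\circ T$ gives the pre-adjoint as a composition of explicit pre-adjoints), but the two approaches buy essentially the same thing. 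Both are correct.
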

\begin{proof}
Let $\tilde f\in C_b^{k,\widetilde\omega}(\RR^n)$ be such that $\tilde f|_{S}=f$ and $\|\tilde f\|_{C_b^{k,\widetilde\omega}(\RR^n)}=\|f\|_{C_b^{k,\widetilde\omega}(S)}$. Then by Lemma \ref{lem6.2} we have
\[
\begin{array}{l}
\displaystyle
(H|_{S'})^*f=f\circ H_{S'}=(\tilde f\circ H)|_{S'}=(H^*\tilde f)|_{S'}\in C_b^{k,\widetilde\omega}(S')\quad {\rm and}\\
\\
\displaystyle \|(H|_{S'})^*f\|_{C_b^{k,\widetilde\omega}(S')}\le \|H^*\|\cdot\|\tilde f\|_{C_b^{k,\widetilde\omega}(\RR^n)}=\|H^*\|\cdot\|f\|_{C_b^{k,\widetilde\omega}(S)}.
\end{array}
\]

This shows that the operator $(H|_{S'})^*: C_b^{k,\widetilde\omega}(S)\rightarrow C_b^{k,\widetilde\omega}(S')$ is well-defined and belongs to $\mathcal L(C_b^{k,\widetilde\omega}(S); C_b^{k,\widetilde\omega}(S'))$.

Further, the fact that the operator $H^*:C_b^{k,\widetilde\omega}(\RR^n)\rightarrow C_b^{k,\widetilde\omega}(\RR^n)$ is weak$^*$ continuous follows straightforwardly from Proposition \ref{prop3.1}, Lemma \ref{lem6.2} and the the Fa\`{a} di Bruno formula \eqref{eq6.47}.
Let $T\in Ext(C_b^{k,\widetilde\omega}(S);C_b^{k,\widetilde\omega}(\RR^n))$ be the extension operator of finite depth (see section~1.3) and $q_{S'}: C_b^{k,\widetilde\omega}(\RR^n)\rightarrow C_b^{k,\widetilde\omega}(S')$ be the quotient map induced by restrictions of functions on $\RR^n$ to $S'$. Then clearly, for all $f\in C_b^{k,\widetilde\omega}(S)$,
\[
(H|_{S'})^*f=f\circ H|_{S'}=((Tf)\circ H)|_{S'}=(q_{S'}\circ H^*\circ T)f.
\]
Therefore $(H|_{S'})^*=q_{S'}\circ H^*\circ T$. Here the operator $T$ is weak$^*$ continuous by Theorem \ref{teo1.6} and the operator $q_{S'}$ is  weak$^*$ continuous because it is adjoint of the isometric embedding $G_b^{k,\widetilde\omega}(S')\hookrightarrow G_b^{k,\widetilde\omega}(\RR^n)$. This implies that the operator $(H|_{S'})^*$ is weak$^*$ continuous as well.
\end{proof}

We are ready to prove Theorem \ref{te1.11}. 

Let  $\widetilde T\in Ext(C_b^{k,\omega}(S');C_b^{k,\omega}(\RR^n))$ be the extension operator of the first part of Theorem \ref{te1.11}, see \eqref{eq6.42}. We set (for $\widetilde \omega:=\omega$)
\begin{equation}\label{eq6.48}
E:=\widetilde T\circ (H|_{S'})^*.
\end{equation}
\begin{Lm}\label{lem6.4}
Operator $E\in Ext(C_b^{k,\omega}(S); C_b^{k,\omega}(\RR^n))$, is weak$^*$ continuous and maps $C_0^{k,\omega}(S)$ in $C_0^{k,\omega}(S')$.
\end{Lm}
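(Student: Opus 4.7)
The plan is to verify the three listed properties of $E$ in sequence, with the $C_0$--invariance being the main work.

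By Lemma \ref{lem6.3} applied with $\widetilde\omega:=\omega$ (its hypothesis $\varlimsup_{t\to 0^+}\omega_o(t)/\omega(t)<\infty$ holding with limit zero by \eqref{equ1.8}), the pullback $(H|_{S'})^{*}$ is a weak$^{*}$ continuous element of $\mathcal L(C_b^{k,\omega}(S); C_b^{k,\omega}(S'))$. By Lemma \ref{lem6.1}, $\widetilde T\in\mathcal L(C_b^{k,\omega}(S'); C_b^{k,\omega}(\RR^n))$, and it is weak$^{*}$ continuous because, having finite depth, it arises as an adjoint via Theorem \ref{teo1.6} and \eqref{proj}. Composing gives $E\in\mathcal L(C_b^{k,\omega}(S); C_b^{k,\omega}(\RR^n))$ weak$^{*}$ continuous. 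For the extension identity, the retraction hypothesis $H|_{S}=\mathrm{id}$ gives $(H|_{S'})^{*}f(x)=f(H(x))=f(x)$ for $x\in S$, so $(H|_{S'})^{*}f$ extends $f$ to all of $S'$; then $\widetilde T$, being an extension operator from $S'$ to $\RR^n$, forces $(Ef)|_{S'}=(H|_{S'})^{*}f$, and in particular $(Ef)|_{S}=f$. This confirms $E\in Ext(C_b^{k,\omega}(S); C_b^{k,\omega}(\RR^n))$.

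The main step is $E(C_0^{k,\omega}(S))\subset C_0^{k,\omega}(\RR^n)$ (the natural codomain of $E$), which I would prove by density and closedness. Let $D\subset C_0^{k,\omega}(S)$ be the set of restrictions to $S$ of compactly supported $C^\infty$ functions on $\RR^n$; by Corollary \ref{cor1.10} together with the definition of $C_0^{k,\omega}(S)$ as a trace, $D$ is dense in $C_0^{k,\omega}(S)$. For $f=\tilde f|_{S}$ with $\tilde f$ compactly supported and $C^\infty$, I would check three points: (a) properness of $H|_{S'}$ forces $(H|_{S'})^{*}f$ to have compact support in $S'$; (b) the Fa\`a di Bruno formula \eqref{eq6.47} applied to $\tilde f\circ H$ places its $k$-th derivatives in $C_b^{0,\omega_o}(\RR^n)$, since each summand is a product of a Lipschitz factor $D^\lambda\tilde f(H(\cdot))$ with factors $D^\beta H$ (the top-order $|\beta|=k$ contributing $\omega_o$-H\"older, the lower-order Lipschitz), and an $\omega_o$-H\"older times Lipschitz product is $\omega_o$-H\"older at small distances since $t/\omega_o(t)$ is nondecreasing; (c) Lemma \ref{lem6.1} with modulus $\omega_o$ then gives $\widetilde T((H|_{S'})^{*}f)\in C_b^{k,\omega_o}(\RR^n)$, while the factorization $\widetilde T=\rho\cdot T$ together with the compact-support-propagation property of $T$ recorded in the first part makes $Ef$ compactly supported on $\RR^n$. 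Compact support yields condition (i) in the definition of $C_0^{k,\omega}(\RR^n)$; the $\omega_o$-H\"older top derivatives combined with $\omega_o(t)/\omega(t)\to 0$ from \eqref{equ1.8} yield condition (ii). Hence $Ef\in C_0^{k,\omega}(\RR^n)$ for $f\in D$, and the conclusion for all $f\in C_0^{k,\omega}(S)$ follows by continuity of $E$ and closedness of $C_0^{k,\omega}(\RR^n)$ in $C_b^{k,\omega}(\RR^n)$.

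The principal obstacle is the modulus bookkeeping in steps (b)--(c): one must exploit the smoothness of $\tilde f$ to upgrade the top-order regularity of $(H|_{S'})^{*}f$ from merely $\omega$-H\"older to $\omega_o$-H\"older, and this upgrade must survive the application of the extension $\widetilde T$. Hypothesis \eqref{equ1.8} is essential at this final juncture, as it is precisely what converts $\omega_o$-H\"older top derivatives with compact support into membership in $C_0^{k,\omega}(\RR^n)$.
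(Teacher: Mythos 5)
Your proposal is correct and follows essentially the same route as the paper: extension identity from the retraction property $H|_S=\mathrm{id}$, weak$^*$ continuity by composing Lemma \ref{lem6.3} (with $\widetilde\omega=\omega$) and the weak$^*$-continuous $\widetilde T$, and then the density argument on restrictions of compactly supported $C^\infty$ functions using properness of $H|_{S'}$ for compact support, the $\omega_o$-regularity upgrade for the top derivatives, and condition \eqref{equ1.8} to land in $C_0^{k,\omega}(\RR^n)$. The only cosmetic difference is that in your step (b) you re-derive the $\omega_o$-regularity by unpacking the Fa\`a di Bruno computation, whereas the paper simply cites Lemma \ref{lem6.3} applied with $\widetilde\omega=\omega_o$; you also correctly noted that the codomain in the lemma's statement should read $C_0^{k,\omega}(\RR^n)$.
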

\begin{proof}
The first two statements follow from the hypotheses of the theorem, Lemma \ref{lem6.3} and the fact that $\widetilde T$ is weak$^*$ continuous. So let us check the last statement.

Let $f\in C_0^{k,\omega}(S)$ be the restriction of a $C^\infty$ function with compact support on $\RR^n$. Since $H|_{S'}:S'\rightarrow S$ is a proper map (by hypothesis (b) of the theorem),  $(H|_{S'})^*f\in C_b^{k,\omega}(S')$ has compact support.  Moreover, since $f\in C_b^{k,\omega_o}(S)$,  Lemma \ref{lem6.3} applied to $\widetilde\omega=\omega_o$ implies that $(H|_{S'})^*f\in C_b^{k,\omega_o}(S')$. Finally, since $\widetilde T\in Ext(C_b^{k,\omega_o}(S');C_b^{k,\omega_o}(\RR^n))$ as well, $Ef\in C_b^{k,\omega_o}(\RR^n)$ and has compact support (because $(H|_{S'})^*f$ has it). Due to condition \eqref{equ1.8} for $\omega_o$ we obtain from here that $Ef\in C_0^{k,\omega}(\RR^n)$. Since the set of such functions $f$ is dense in $C_0^{k,\omega}(S)$ (see Corollary \ref{cor1.10}), $E$ maps $C_0^{k,\omega}(S)$ in $C_0^{k,\omega}(\RR^n)$, as required.
\end{proof}

Now the result of the theorem follows from Lemma \ref{lem6.4} and Theorem \ref{te1.4}\,(2); that is,
$G_b^{k,\omega}(S)$ is isomorphic to $\bigl(C_0^{k,\omega}(S)\bigr)^*$ and so $G_b^{k,\omega}(S)$ and $C_0^{k,\omega}(S)$
have the metric approximation property (see the argument at the end of section~6.1 above).

The proof of the theorem is complete. 
\end{proof}

\end{document}